\newcommand{\defn}[1]{{\color{darkred}\emph{#1}}} 
\definecolor{darkblue}{rgb}{0.0,0,0.7} 
\definecolor{darkred}{rgb}{0.7,0,0} 
\definecolor{darkgreen}{rgb}{0, .6, 0} 
\newcommand{\HVT}{\mathsf{HVT}}
\newcommand{\MVT}{\mathsf{MVT}}
\newcommand{\SVT}{\mathsf{SVT}}
\newcommand{\SSYT}{\mathsf{SSYT}}
\newcommand{\RPP}{\mathsf{RPP}}
\newcommand{\ev}{\mathsf{ev}}
\newcommand{\sfA}{\mathsf{A}}
\newcommand{\sfL}{\mathsf{L}}
\newcommand{\sfH}{\mathsf{H}}
\newcommand{\shape}{\mathsf{shape}}
\newcommand*\circled[1]{\tikz[baseline=(char.base)]{\node[shape=circle,draw,inner sep=2pt] (char) {$#1$};}}
\newtheorem{theorem}{Theorem}[section]
\newtheorem{lemma}[theorem]{Lemma}
\newtheorem{corollary}[theorem]{Corollary}
\newtheorem{definition}[theorem]{Definition}
\newtheorem{proposition}[theorem]{Proposition}
\newtheorem{example}[theorem]{Example}
\newtheorem{remark}[theorem]{Remark}
\numberwithin{equation}{section}
\title{Uncrowding algorithm for hook-valued tableaux}
\author[J.~Pan]{Jianping Pan}
\address[J. Pan]{Department of Mathematics, UC Davis, One Shields Ave., Davis, CA 95616-8633, U.S.A.\\
current address: Department of Mathematics, NC State University, Raleigh, NC 27695-8205, U.S.A.}
\email{jpan9@ncsu.edu}
\author[J.~Pappe]{Joseph Pappe}
\address[J. Pappe]{Department of Mathematics, UC Davis, One Shields Ave., Davis, CA 95616-8633, U.S.A.}
\email{jhpappe@ucdavis.edu}
\author[W.~Poh]{Wencin Poh}
\address[W. Poh]{Department of Mathematics, UC Davis, One Shields Ave., Davis, CA 95616-8633, U.S.A.}
\email{wpoh@ucdavis.edu}
\author[A.~Schilling]{Anne Schilling}
\address[A. Schilling]{Department of Mathematics, UC Davis, One Shields Ave., Davis, CA 95616-8633, U.S.A.}
\email{anne@math.ucdavis.edu}
\date{\today}
\keywords{stable (canonical) Grothendieck polynomials, hook-valued tableaux, crystal bases, 
uncrowding algorithm}
\subjclass[2010]{Primary 05E05, 05E10; Secondary 14N10, 14N15, 20G42}
\date{\today}
\begin{document}

\begin{abstract}
Whereas set-valued tableaux are the combinatorial objects associated to stable Grothen\-dieck polynomials, 
hook-valued tableaux are associated to stable canonical Grothendieck polynomials. In this paper, we define 
a novel uncrowding algorithm for hook-valued tableaux. The algorithm ``uncrowds'' the entries in the arm of the
hooks and yields a set-valued tableau and a column-flagged increasing tableau. We prove that our uncrowding algorithm
intertwines with crystal operators. An alternative uncrowding algorithm that ``uncrowds'' the entries in the leg instead of the
arm of the hooks is also given. As an application of uncrowding, we obtain various expansions of the canonical Grothendieck 
polynomials.
\end{abstract}

\maketitle 

\section{Introduction}
	
Set-valued tableaux play an important role in the $K$-theory of the Grassmannian.
They form a generalization of semi-standard Young tableaux, where boxes may contain sets of integers rather
than just integers~\cite{Buch.2002}. In particular, the \defn{stable symmetric Grothendieck polynomial} indexed by the partition
$\lambda$ is the generating function of set-valued tableaux 
\begin{equation}
\label{equation.G stable}
	G_\lambda(x; \beta) = \sum_{T \in \SVT(\lambda)} \beta^{|T|-|\lambda|} x^{\mathsf{weight}(T)},
\end{equation}
where $\SVT(\lambda)$ is the set of \defn{set-valued tableaux} of shape $\lambda$ and $\mathsf{weight}(T)$ is the vector
with $i$-th entry being the number of $i$ in $T$. Here $|T|$ is the number of entries in $T$ and $|\lambda|$ is the size of $\lambda$. 
Stable symmetric Grothendieck polynomials $G_\lambda$ can be viewed
as a $K$-theory analogue of the Schur functions $s_\lambda$ (while the Grothendieck polynomial is an analog of the
Schubert polynomial~\cite{LS.1983}). Buch~\cite{Buch.2002} also described the structure coefficients $c_{\lambda \mu}^\nu$, which
is the coefficient of $G_{\nu}$ in the expansion of $G_\lambda G_\mu$ in terms of set-valued tableaux, generalizing the 
Littlewood--Richardson rule for Schur functions.

The Grassmannian $\operatorname{Gr}(k,\mathbb{C}^n)$ of $k$-planes in $\mathbb{C}^n$ has a fundamental duality 
isomorphism
\[
	\operatorname{Gr}(k,\mathbb{C}^n) \cong \operatorname{Gr}(n-k,\mathbb{C}^n).
\]
This implies that the structure constants have the symmetry $c_{\lambda\mu}^\nu = c_{\lambda'\mu'}^{\nu'}$, where
$\lambda'$ denotes the conjugate of the partition $\lambda$ (see for example~\cite[Example 9.20]{Fulton.1997}). Hence one expects a ring 
homomorphism on the completion
of the ring of symmetric function defined on the basis of stable symmetric Grothendieck polynomials
$\tau(G_\lambda) = G_{\lambda'}$. The standard involutive ring automorphism $\omega$ defined on the Schur basis
by $\omega(s_\lambda) = s_{\lambda'}$ does not have this property~\cite{LP.2007}
\[
	\omega(G_\lambda) = J_\lambda \neq G_{\lambda'},
\]
where $J_\lambda$ is the weak symmetric Grothendieck polynomial.

Yeliussizov~\cite{Yeliussizov.2017} introduced a new family of \defn{canonical stable Grothendieck polynomials}
$G_\lambda(x;\alpha,\beta)$ such that
\[
	\omega(G_\lambda(x;\alpha,\beta)) = G_{\lambda'}(x;\beta,\alpha).
\]
Combinatorially, the canonical stable Grothendieck polynomials can be expressed as generating functions of
\defn{hook-valued tableaux}. In a hook-valued tableau, each box contains a semistandard Young tableau of hook shape,
which is weakly increasing in rows and strictly increasing in columns. More precisely
\[
	G_\lambda(x;\alpha,\beta) = \sum_{T\in \HVT(\lambda)} \alpha^{a(T)} \beta^{\ell(T)} x^{\mathsf{weight}(T)},
\]
where $\HVT(\lambda)$ is the set of hook-valued tableaux of shape $\lambda$, $a(T)$ is the sum of all arm lengths 
and $\ell(T)$ is the sum of all leg lengths of the hook tableaux in $T$.

A hook-valued tableau $T$ is a set-valued tableau when all hook tableaux entries are single columns or equivalently
$a(T)=0$. Hence $G_\lambda(x;\alpha,\beta)$ specializes to $G_\lambda(x;\beta)$ for $\alpha=0$. 
Similarly, a hook-valued tableau $T$ is a multiset-valued tableau when all hook tableaux entries are single 
rows or equivalently $\ell(T)=0$. Hence $G_\lambda(x;\alpha,\beta)$ specializes to $J_{\lambda}(x;\alpha)$ 
for $\beta=0$. 

\smallskip

In this paper, we describe a novel \defn{uncrowding algorithm} on hook-valued tableaux
(see Definitions~\ref{def: uncrowd_bump}, \ref{def: uncrowd_insert} and~\ref{def: uncrowd_map}).
The uncrowding algorithm on set-valued tableaux was originally developed by Buch~\cite[Theorem 6.11]{Buch.2002}
to give a bijective proof of Lenart's Schur expansion of symmetric stable Grothendieck polynomials~\cite{Lenart.2000}.
This uncrowding algorithm takes as input a set-valued tableau and produces a semistandard Young tableau (using the RSK bumping 
algorithm to uncrowd cells that contain more than one integer) and a flagged increasing tableau~\cite{Lenart.2000} (also known as an elegant 
filling~\cite{LP.2007,BandlowMorse.2012,Patrias.2016}), which serves as a recording tableau.

Chan and Pflueger~\cite{ChanPflueger.2019} provide an expansion of stable Grothendieck polynomials indexed by skew
partitions in terms of skew Schur functions. Their proof uses a generalization of the uncrowding algorithm of 
Lenart~\cite{Lenart.2000}, Buch~\cite{Buch.2002}, and Reiner, Tenner and Yong~\cite{RTY.2018} to skew shapes. Their analysis is motivated
geometrically by identifying Euler characteristics of Brill--Noether varieties up to sign as counts of set-valued standard 
tableaux. The uncrowding algorithm was also used in the analysis of $K$-theoretic analogues of the Hopf algebras 
of symmetric functions, quasisymmetric functions, noncommutative symmetric functions, and of
the Malvenuto--Reutenauer Hopf algebra of permutations~\cite{LP.2007,BandlowMorse.2012,Patrias.2016}.
In~\cite{GZJ.2020}, a vertex model for canonical Grothendieck polynomials and their duals was studied, which was used to
derive Cauchy identities.

An important property of the uncrowding algorithm on set-valued tableaux is that it intertwines with crystal 
operators~\cite{MPS.2018} (see also~\cite{MPPS.2020}). The crystal structure on a combinatorial set is
the combinatorial shadow of a (quantum) group representation (see for example~\cite{HongKang.2002,BumpSchilling.2017}).
A crystal structure on hook-valued tableaux was recently introduced by Hawkes and Scrimshaw~\cite{HS.2019}.
Our novel uncrowding map on hook-valued tableaux yields a set-valued tableau and a recording tableau.
We prove that it intertwines with crystal operators (see Proposition~\ref{proposition.main} and Theorem~\ref{theorem.main}).
This was stated as an open problem in~\cite{HS.2019}.

The paper is organized as follows. In Section~\ref{section.hook-valued tableaux}, we review the definition of
semistandard hook-valued tableaux of~\cite{Yeliussizov.2017} and the crystal structure on them~\cite{HS.2019}.
In Section~\ref{section.uncrowding}, we define the new uncrowding map on hook-valued tableaux and prove
that it intertwines with the crystal operators and other properties. We also give a variant of the uncrowding algorithm
on hook-valued tableaux. In Section~\ref{section.application}, we consider applications of the uncrowding algorithm, in particular 
expansions of the canonical Grothendieck polynomials using techniques developed in~\cite{BandlowMorse.2012}.

\subsection*{Acknowledgments}
We are grateful to Graham Hawkes and Travis Scrimshaw for discussions.

This work was partially supported by NSF grant DMS--1764153. 
JiP was partially supported by NSF grant DMS--1700814. 
AS was partially supported by NSF grant DMS--1760329.

\section{Hook-valued tableaux}
\label{section.hook-valued tableaux}

In Section~\ref{section.hook}, we define hook-valued tableaux~\cite{Yeliussizov.2017} and in
Section~\ref{section.crystal} we review the crystal structure on hook-valued tableaux as
introduced in~\cite{HS.2019}.

\subsection{Hook-valued tableaux}
\label{section.hook}

A semistandard Young tableau $U$ of hook shape is a tableau of the form
\[U = \raisebox{4em}{\begin{ytableau}
\ell_p\\
\vdots\\
\ell_1\\
x & a_1 & \ldots & a_q
\end{ytableau}} \; ,\]
where the integer entries weakly increase from left to right and strictly increase from bottom to top. 
Note that we use French notation for Young diagrams and tableaux throughout the paper.
In this case, $\sfH(U)=x$ is called the \defn{hook entry} of $U$, 
$\sfL(U)=(\ell_1,\ell_2,\ldots,\ell_p)$ is the \defn{leg} of $U$, 
and $\sfA(U)=(a_1,a_2,\ldots,a_q)$ is the \defn{arm} of $U$. 
Both the arm and the leg of $U$ are allowed to be empty.
Additionally, the \defn{extended leg} of $U$ is defined as 
$\sfL^+(U)=(x,\ell_1,\ell_2,\ldots,\ell_p)$.
We denote by $\max(U)$ (resp. $\min(U)$) the maximal (resp. minimal) entry in $U$.

\begin{definition} \cite{Yeliussizov.2017}
	Fix a partition $\lambda$. 
	A \defn{semistandard hook-valued tableau} (or \defn{hook-valued tableau} for short) $T$ of shape $\lambda$ is a 
	filling of the Young diagram for $\lambda$ with (nonempty) semistandard Young tableaux of hook shape such that:
	\begin{enumerate}
		\item[(i)] $\max(A)\leqslant \min(B)$ whenever the cell containing $A$ is in the same row, but left of the cell
		 containing $B$;
		\item[(ii)] $\max(A)<\min(C)$ whenever the cell containing $A$ is in the same column, but below the cell 
		containing $C$.
	\end{enumerate}
	The set of all hook-valued tableaux of shape $\lambda$ (respectively, with entries at most $m$) is 
	denoted by $\HVT(\lambda)$ (respectively, $\HVT^m(\lambda)$).
	
	Given a hook-valued tableau $T$, its \defn{arm excess} is the total number of integers in the arms 
	of all cells of $T$, while its \defn{leg excess} is the total number of integers in the legs of all cells of $T$.
\end{definition}

\begin{remark}
	In the special case when a hook-valued tableau has arm excess 0, it is also called a \defn{set-valued tableau}. 
	Similarly, a \defn{multiset-valued tableau} is a hook-valued tableau with leg excess 0.
	We use the notation $\SVT(\lambda)$ (resp. $\SVT^m(\lambda)$) and $\MVT(\lambda)$ (resp. $\MVT^m(\lambda)$) 
	for the set of all set-valued tableaux of shape $\lambda$ (resp. with entries at most $m$) and the set 
	of all multiset-valued tableaux of shape $\lambda$ (resp. with entries at most $m$), respectively.
\end{remark}

\subsection{Crystal structure on hook-valued tableaux}
\label{section.crystal}

Hawkes and Scrimshaw~\cite{HS.2019} defined a crystal structure on hook-valued tableaux.
We review their definition here.

\begin{definition}[\cite{HS.2019}, Definition 4.1]
	Let $C$ be a hook-valued tableau of column shape. 
	The column reading word $R(C)$ is obtained by reading the extended leg in each cell from top to bottom, followed by
	reading all of the remaining entries, arranged in a weakly increasing order.
	
	For a hook-valued tableau $T$, its column reading word is formed by concatenating the column 
	reading words of all of its columns, read from left to right, that is,
	\[
		R(T)= R(C_1)R(C_2)\ldots R(C_\ell),
	\]
	where $\ell$ is the number of columns of $T$ and $C_i$ is the $i$th column 
	of $T$. 
\end{definition}

\begin{example}\label{eg: column_reading}
	Let $T$ be the hook-valued tableau
    \[{\def\mc#1{\makecell[lb]{#1}}
    T =\,{\begin{array}[lb]{*{3}{|l}|}\cline{1-2}
\mc{4\\33}&\mc{5}\\\cline{1-3}
\mc{2\\11}&\mc{4\\334}&\mc{4445}\\\cline{1-3}
\end{array}\,.}}\]
	The column reading words for the columns of $T$ are respectively 432113, 54334 and 4445, so that 
	\[
		R(T)=432113543344445.
	\]
\end{example}

\begin{definition} \cite[Definition 4.3]{HS.2019}
\label{def: hvt_crystal}
	Let $T\in \HVT^m(\lambda)$. For any $1\leqslant i<m$, we employ the following pairing rules.
	Assign $-$ to every $i$ in $R(T)$ and assign $+$ to every $i+1$ in 
	$R(T)$. Then, successively pair each $+$ that is 
	adjacent and to the left of a $-$, removing all paired signs until nothing can be paired.
	
	The \defn{operator $f_i$} acts on $T$ according to the following rules in the given order. If there is no unpaired~$-$, 
	then $f_i$ annihilates $T$. Otherwise, locate the cell $c$ with entry the hook-valued tableau $B=T(c)$ containing 
	the $i$ corresponding to the rightmost unpaired $-$.
	\begin{enumerate}
		\item [(M)] If there is an $i+1$ in the cell above $c$ with entry $B^\uparrow$, 
		then $f_i$ removes an $i$ from $\sfA(B)$ and adds $i+1$ to $\sfA(B^\uparrow)$.
		\item [(S)] Otherwise, if there is a cell to the right of $c$ with entry $B^\rightarrow$, such that it contains an $i$ in 
		$\sfL^+(B^\rightarrow)$, then $f_i$ removes the $i$ from 
		$\sfL^+(B^\rightarrow)$ and adds $i+1$ to $\sfL(B)$.
		\item [(N)] Else, $f_i$ changes the $i$ in $B$ into an $i+1$.
	\end{enumerate}
	
	Similarly, the \defn{operator $e_i$} acts on $T$ according to the following rules in the given order. If there is no 
	unpaired $+$, then $e_i$ annihilates $T$. Otherwise, locate the cell $c$ with entry the hook-valued tableau $B=T(c)$
	containing the entry $i+1$ corresponding to the leftmost unpaired $+$.
	\begin{enumerate}
		\item [(M)] If there is an $i$ in the cell below $c$ with entry $B^\downarrow$, then $e_i$ removes the $i+1$ from 
		$\sfA(B)$ and adds $i$ to $\sfA(B^\downarrow)$.
		\item [(S)] Otherwise, if there is a cell to the left of $c$ with entry $B^\leftarrow$, such that it contains an $i+1$ 
		in $\sfL(B^\leftarrow)$, then $e_i$ removes the $i+1$ from 
		$\sfL(B^\leftarrow)$ and adds $i$ to $\sfL^+(B)$.
		\item [(N)] Else, $e_i$ changes the $i+1$ in $B$ into an $i$.
	\end{enumerate}
	Based on the pairing procedure above, $\varphi_i(T)$ is the number of unpaired $-$, whereas $\varepsilon_i(T)$ is 
	the number of unpaired $+$. 
\end{definition}

We remark that the definition of crystal operators on $\HVT$ specializes to the definition on $\SVT$ in \cite{MPS.2018} 
or the one on $\MVT$ in \cite{HS.2019} when the arm excess or leg excess of the tableaux is set to 0, respectively. 

\begin{example}
	\label{eg:crystal_semistd_multiset_valued_tableau}
	Consider the following hook-valued tableau $T$:
    \[
        {\def\mc#1{\makecell[lb]{#1}}
        {T =\, \begin{array}[lb]{*{2}{|l}|}\cline{1-2}
        \mc{4\\34}&\mc{5\\4}\\\cline{1-2}
        \mc{2\\11}&\mc{3\\233}\\\cline{1-2}
        \end{array}.}}  
    \]

    Then, $e_3$ annihilates $T$, whereas
\[
    {\def\mc#1{\makecell[lb]{#1}}
{e_1(T) =\, \begin{array}[lb]{*{2}{|l}|}\cline{1-2}
\mc{4\\34}&\mc{5\\4}\\\cline{1-2}
\mc{11}&\mc{3\\2\\133}\\\cline{1-2}
\end{array}}\,,\quad
f_1(T) =\, \begin{array}[lb]{*{2}{|l}|}\cline{1-2}
    \mc{4\\34}&\mc{5\\4}\\\cline{1-2}
    \mc{2\\12}&\mc{3\\233}\\\cline{1-2}
    \end{array}\,,\quad
f_3(T) =\,  \begin{array}[lb]{*{2}{|l}|}\cline{1-2}
    \mc{4\\34}&\mc{5\\44}\\\cline{1-2}
    \mc{2\\11}&\mc{3\\23}\\\cline{1-2}
    \end{array}\,.
}
\]
\end{example}

For a given cell $(r, c)$ in row $r$ and column $c$ in a hook-valued tableau $T$, 
let $L_T(r,c)$ be the leg of $T(r, c)$, let $\sfA_{T}(r, c)$ 
be arm of $T(r, c)$, let $H_T(r, c)$ be the hook entry of $T(r, c)$, and let 
$L^{+}_{T}(r,c)$ be the extended leg of $T(r,c)$.

\section{Uncrowding map on hook-valued tableaux}
\label{section.uncrowding}

In Section~\ref{section.uncrowding SVT}, we first review the uncrowding map on set-valued tableaux.
In Section~\ref{section.uncrowding HVT}, we give a new uncrowding map on hook-valued tableaux and prove
some of its properties in Section~\ref{section.uncrowding properties}. The relation to the uncrowding map on
multiset-valued tableaux is given in Section~\ref{section.uncrowding MVT}. In Section~\ref{section.crowding},
we give the inverse of the uncrowding map on hook-valued tableaux, called the crowding map. 
In Section~\ref{section.alternative uncrowding}, an alternative definition of the uncrowding map on
hook-valued tableaux is provided.

\subsection{Uncrowding map on set-valued tableaux}
\label{section.uncrowding SVT}

For set-valued tableaux, there exists an uncrowding operator, 
which maps a set-valued tableau to a pair of tableaux, one being 
a semistandard Young tableau and the other a flagged increasing tableau
(see for example~\cite{Lenart.2000,Buch.2002,BandlowMorse.2012,RTY.2018}). 
In this setting, the uncrowding operator intertwines with the crystal operators on set-valued 
tableaux and semistandard Young tableaux, respectively \cite{MPS.2018}.

Consider partitions $\lambda$, $\mu$ with $\lambda\subseteq\mu$ and $\lambda_1=\mu_1$. 
A \defn{flagged increasing tableau} (introduced in~\cite{Lenart.2000} and called (strict) \defn{elegant fillings} by various
authors~\cite{LP.2007,BandlowMorse.2012,Patrias.2016})
is a row and column strict filling of the skew shape $\mu/\lambda$ such that 
the positive integer entries in the $i$-th row of the tableau are at most $i-1$ for all $1\leqslant i\leqslant \ell(\mu)$, 
where $\ell(\mu)$ is the length of partition $\mu$. In particular, the bottom row is empty. 
The set of all flagged increasing tableaux is denoted by $\mathcal{F}$. The set of all flagged increasing tableaux
of shape $\mu/\lambda$ with $\lambda_1=\mu_1$ is denoted by $\mathcal{F}(\mu/\lambda)$.

We now review the uncrowding operation on set-valued tableaux. We call a cell in a set-valued tableau
a \defn{multicell} if it contains more than one letter.

\begin{definition}
\label{definition.uncrowding SVT}
Define the \defn{uncrowding operation} on $T\in \SVT(\lambda)$ as follows.
First identify the topmost row $r$ in $T$ with a multicell. Let $x$ be the largest letter in row $r$ that lies in a multicell; 
remove $x$ from the cell and perform RSK row bumping with $x$ into the rows above. The resulting tableau, whose 
shape differs from $\lambda$ by the addition of one cell, is the output of this operation.

The \defn{uncrowding map} on set-valued tableaux
\begin{equation}
\label{equation.U SVT}
	\mathcal{U}_{\SVT}: \, \SVT(\lambda) \longrightarrow \bigsqcup_{\mu\supseteq \lambda}\SSYT(\mu)\times \mathcal{F}(\mu/\lambda)
\end{equation}
is defined as follows. Let $T\in \SVT(\lambda)$ with leg excess $\ell$.
\begin{enumerate}
    \item Initialize $P_0 = T$ and $Q_0 = F_0$, where $F_0$ is the unique flagged increasing tableau of shape 
    $\lambda/\lambda$.
    \item For each $1\leqslant i \leqslant \ell$, $P_i$ is obtained from $P_{i-1}$ by applying the uncrowding 
    operation. Let $C$ be the cell in $\shape(P_i)/\shape(P_{i-1})$. If $C$ is in row $r'$, then $F_i$ is obtained from
    $F_{i-1}$ by adding cell $C$ with entry $r'-r$.
    \item Set $\mathcal{U}_{\SVT}(T) = (P,F) := (P_\ell,F_\ell)$.
\end{enumerate}
\end{definition}

It was proved in~\cite[Section 6]{Buch.2002} that $\mathcal{U}_{\SVT}$ in~\eqref{equation.U SVT} is a bijection.
Monical, Pechenik and Scrimshaw~\cite{MPS.2018} proved that $\mathcal{U}_{\SVT}$ intertwines with the crystal operators
on set-valued tableaux (see also~\cite{MPPS.2020}).
A similar uncrowding algorithm for multiset-valued tableaux was given in~\cite[Section 3.2]{HS.2019}.

\subsection{Uncrowding map on hook-valued tableaux}
\label{section.uncrowding HVT}

In~\cite{HS.2019}, the authors ask for an uncrowding map for hook-valued
tableaux which intertwines with the crystal operators. Here we provide such an uncrowding map by uncrowding the arm
excess in a hook-valued tableaux to obtain a set-valued tableaux. An alternative obtained by uncrowding the leg excess first
is given in Section~\ref{section.uncrowding MVT}. 

\begin{definition} \label{def: uncrowd_bump}
The \defn{uncrowding bumping} $\mathcal{V}_{b} \colon \HVT \rightarrow \HVT$ is defined by the following algorithm:
\renewcommand\labelenumi{(\arabic{enumi})}
\renewcommand\theenumi\labelenumi
\begin{enumerate}
    \item \label{def: uncrowd_bump_1} Initialize $T$ as the input.
    \item \label{def: uncrowd_bump_2} If the arm excess of T equals zero, return T.
    \item \label{def: uncrowd_bump_3} Else, find the rightmost column that contains a cell with nonzero arm 
    excess. Within this column, find the cell with the largest value in its 
    arm. (In French notation this is the topmost cell with nonzero arm 
    excess in the specified column.) Denote the row index and column index of 
    this cell by $r$ and $c$, respectively. Denote the cell as $(r,c)$,  its 
    rightmost arm entry by $a$, and its largest leg entry by $\ell$.
    \item \label{def: uncrowd_bump_4} Look at the column to the right of $(r,c)$ (i.e. column $c+1$) and find the smallest 
    number that is greater than or equal to $a$. 
        \begin{itemize}
            \item If no such number exists, attach an empty cell to the top of column $c+1$ and label the cell as $(\tilde{r}, c+1)$,
             where $\tilde{r}$ is its row index. Let $k$ be the empty character.
            \item If such a number exists, label the value as $k$ and the cell containing $k$ as  $(\tilde{r},c+1)$ where $\tilde{r}$ is the 
            cell's row index.
        \end{itemize}
    We now break into cases:
    \begin{enumerate}
        \item \label{def: uncrowd_bump_a} If $\tilde{r} \not = r$, then remove 
        $a$ from $\sfA_{T}(r, c)$, replace $k$ with $a$, and attach $k$ to the 
        arm of $\sfA_{T}(\tilde{r},c+1)$.
        \item \label{def: uncrowd_bump_b} If $\tilde{r} = r$ then remove $(a, 
        \ell] \cap \sfL_{T}(r, c)$ from $\sfL_{T}(r,c)$ where $(a, \ell] = \{a+1, a+2, \ldots, \ell\}$, remove $a$ from 
        $\sfA_{T}(r,c)$, insert $(a, \ell] \cap \sfL_{T}(r, c)$ into $\sfL_{T}(\tilde{r},c+1)$, replace the hook entry 
        of $(\tilde{r}, c+1)$ with $a$, and attach $k$ to $\sfA_{T}(\tilde{r}, c+1)$.
    \end{enumerate}
    \item \label{def: uncrowd_bump_5} Output the resulting tableau.
\end{enumerate}
See Figures~\ref{fig: vb-a} and \ref{fig: vb-b} for illustration.
\begin{figure}[h!]
\begin{minipage}{0.45\textwidth}
	\[
		{\def\mc#1{\makecell[lb]{#1}}
		{\begin{array}[lb]{*{2}{|l}|}\cline{1-1}
		\mc{-\\--a}\\\cline{1-1}
		\mc{- \\-}\\\cline{1-2}
		\mc{- \\ - -}&\mc{-\\-}\\\cline{1-2}
		\end{array}} \xrightarrow[]{\mathcal{V}_b}{\begin{array}[lb]{*{2}{|l}|}\cline{1-1}
			\mc{-\\--}\\\cline{1-2}
			\mc{- \\-}&\mc{a}\\\cline{1-2}
			\mc{- \\ - -}&\mc{-\\-}\\\cline{1-2}
			\end{array}}}
	\]
\end{minipage}
\begin{minipage}{0.45\textwidth}
	\[
		{\def\mc#1{\makecell[lb]{#1}}
		{\begin{array}[lb]{*{2}{|l}|}\cline{1-1}
		\mc{-\\--a}\\\cline{1-2}
		\mc{- \\- -}&\mc{-\\k\\-}\\\cline{1-2}
		\end{array}} \xrightarrow[]{\mathcal{V}_b}{\begin{array}[lb]{*{2}{|l}|}\cline{1-1}
			\mc{-\\--}\\\cline{1-2}
			\mc{- \\- -}&\mc{-\\a\\-\,k}\\\cline{1-2}
			\end{array}}}
	\]
\end{minipage}
\caption{When $\tilde{r}\neq r$. Left: $(\tilde{r},c+1)$ is a new cell; Right: $(\tilde{r},c+1)$ is an existing cell.}
\label{fig: vb-a}
\end{figure}
\begin{figure}[h!]
	\begin{minipage}{0.45\textwidth}
		\[
			{\def\mc#1{\makecell[lb]{#1}}
			{\begin{array}[lb]{*{2}{|l}|}\cline{1-1}
			\mc{\ell\\\ast \\-\\--a}\\\cline{1-1}
			\end{array}} \xrightarrow[]{\mathcal{V}_b}{\begin{array}[lb]{*{2}{|l}|}\cline{1-2}
				\mc{- \\--}&\mc{\ell\\\ast\\a}\\\cline{1-2}
				\end{array}}}
		\]
	\end{minipage}
	\begin{minipage}{0.45\textwidth}
		\[
			{\def\mc#1{\makecell[lb]{#1}}
			{\begin{array}[lb]{*{2}{|l}|}\cline{1-2}
			\mc{\ell\\\ast \\-\\--a}&\mc{-\\-\\k}\\\cline{1-2}
			\end{array}} \xrightarrow[]{\mathcal{V}_b}{\begin{array}[lb]{*{2}{|l}|}\cline{1-2}
				\mc{- \\--}&\mc{-\\-\\\ell\\\ast\\a \,k}\\\cline{1-2}
				\end{array}}}
		\]
	\end{minipage}
	\caption{When $\tilde{r}= r$. Left: $(r,c+1)$ is a new cell; Right: $(r,c+1)$ is an existing cell.}
	\label{fig: vb-b}
	\end{figure}
\end{definition}

\begin{lemma} \label{lem: well_defined_v}
The map $\mathcal{V}_b$ is well-defined. More precisely, for $T \in \HVT$ we have $\mathcal{V}_{b}(T) \in \HVT$.
\end{lemma}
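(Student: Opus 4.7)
The plan is a direct case-by-case verification that the output satisfies the three defining conditions of a hook-valued tableau: the underlying diagram is a partition, each cell carries a semistandard hook tableau, and the row and column inequalities between adjacent cells hold. I would organize the argument around the two cases of the algorithm, (a) $\tilde{r}\neq r$ and (b) $\tilde{r}=r$, with each further split by whether $k$ is an existing entry of $T$ or the empty character (so a new cell is attached to column $c+1$).

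Before entering the case analysis, I would record three preliminary observations that drive all the verifications. First, since column $c$ is the rightmost column of $T$ with nonzero arm excess, every cell of columns $c+1, c+2, \ldots$ has empty arm; consequently the entries of column $c+1$, read from bottom to top, form a strictly increasing sequence consisting solely of extended-leg entries. Second, by the choice of $k$, we have $k\geq a$ and every entry of column $c+1$ strictly below the position of $k$ is $<a$. Third, if $(r,c+1)$ exists then its hook entry is at least $\max(T(r,c))\geq a$ by the row condition; hence $\tilde{r}\leq r$ always, and in case (b) the entry $k$ must coincide with the hook entry $H_T(r,c+1)$ and in fact $k\geq \max(a,\ell)$. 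The inequality $k\geq \ell$ is the key invariant used throughout case (b).

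The Young-shape check is nontrivial only when $k$ is empty and a new cell is attached at $(\tilde{r},c+1)$. There, the definition of $\tilde{r}$ as the row one above the top of column $c+1$, combined with the first observation and the Young shape of $T$, implies that row $\tilde{r}-1$ has length $\geq c+1$ and that row $\tilde{r}$ previously had length exactly $c$, so attaching the new cell preserves the partition shape. For the hook validity of each modified cell and the row/column conditions with neighbors, I would examine the extended leg and the arm of each modified cell separately in each sub-case. The delicate verifications are case (a1), where replacing $k$ with $a$ in the extended leg of $(\tilde{r},c+1)$ and appending $k$ to its previously empty arm keeps the extended leg strictly increasing (using $a\leq k$ and the second observation), and case (b1), where the set $S := (a,\ell]\cap \sfL_T(r,c)$ is transferred into $\sfL_T(r,c+1)$. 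In (b1), the inequality $\max S \leq \ell \leq k < \min$ of the old leg of $(r,c+1)$ shows that $S$ and the old leg are disjoint and that $S$ lies entirely below the old leg, so their union is strictly increasing.

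The row and column inequalities with neighbors all reduce to the three preliminary observations. The only nonobvious checks are the row condition between $(r,c)$ and $(r,c+1)$ in case (b), which uses that every surviving arm or leg entry of $(r,c)$ is $\leq a$ equalling the new hook entry of $(r,c+1)$, and the column condition between $(\tilde{r}-1,c+1)$ and the modified $(\tilde{r},c+1)$, which again reduces to the second observation. I expect case (b1) to be the main obstacle: both the validity of the new hook shape at $(r,c+1)$ and the vertical strictness at the top of column $c+1$ must be handled simultaneously, and both rest on the identity $k\geq \ell$ that characterises case (b).
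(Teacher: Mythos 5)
Your proposal is correct and follows essentially the same route as the paper's own proof: a case split on $\tilde{r}\neq r$ versus $\tilde{r}=r$ (refined by whether $k$ exists or a new cell is created), checking hook-semistandardness inside the two modified cells and the row/column conditions with their neighbors, using exactly the same key facts — $k\geq a$ with all column-$(c+1)$ entries below $k$ being $<a$, emptiness of arms to the right of column $c$, and in the $\tilde{r}=r$ case $k=\sfH_T(r,c+1)\geq\max(a,\ell)$, which gives the disjointness/ordering of the transferred leg entries. The only differences are organizational: you isolate these facts as preliminary observations and add the easy verification that the shape remains a partition (and that $\tilde{r}\leq r$ always), points the paper uses implicitly.
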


\begin{proof}
It suffices to check that $\mathcal{V}_{b}$ preserves the semistandardness condition of both the entire hook-valued 
tableau and the filling within each cell.  We break into two cases depending on whether Step~\ref{def: uncrowd_bump_a} 
or~\ref{def: uncrowd_bump_b} in Definition~\ref{def: uncrowd_bump} is applied.

\begin{description}
	\item[Case 1] Assume Step~\ref{def: uncrowd_bump_a} is applied. To verify 
	semistandardness within each cell, it suffices to check cells $(r, c)$ and $(\tilde{r}, c+1)$. 
	The semistandardness within cell $(r, c)$ is clearly preserved as the only change to the hook-shaped tableau in
	cell $(r, c)$ is that an entry was removed from $\sfA_{T}(r, c)$. We now check the semistandardness condition 
	within cell $(\tilde{r}, c+1)$. We have that $\mathcal{V}_{b}$ either 
	created the cell $(\tilde{r}, c+1)$ and inserted the number $a$ in it or 
	$\mathcal{V}_{b}$ replaced $k$ with $a$ and appended $k$ to the arm of cell
	$(\tilde{r}, c+1)$. In both cases, the tableau in cell 
	$(\tilde{r}, c+1)$ is a semistandard hook-shaped tableau. In the second case this is true since $k$ is weakly greater 
	than $\sfH_{T}(\tilde{r}, c+1)$ and $k$ is the smallest number weakly greater than 
	$a$ in column $c+1$.

	We now check the semistandardness of the entire tableau. Note that 
	it suffices to check the semistandardness in row $\tilde{r}$ and column 
	$c+1$. Since $\tilde{r} < r$, the semistandardness in row $\tilde{r}$ is 
	preserved as $a$ is larger than every number in $(\tilde{r},c)$ and $k$ 
	remains in the same cell. Also, the semistandardness in column $c+1$ is 
	preserved as $k$ is chosen to be the smallest number in column $c+1$ that 
	is weakly greater than $a$.

	\item[Case 2] Assume Step~\ref{def: uncrowd_bump_b} is applied. The 
	semistandardness within cell $(r, c)$ is clearly preserved as the only 
	change to $(r, c)$ is that entries from $\sfL_{T}(r, c)$ and 
	$\sfA_{T}(r,c)$ are removed. We now check the semistandardness condition within 
	cell $(r, c+1)$. If $(a, \ell] \cap \sfL_{T}(r,c) = \emptyset$, then $a$ is weakly larger 
	than all elements of $(r,c)$. In this case, the semistandardness within 
	cell $(r, c+1)$ follows from the argument in Case 1. If $(a, \ell] \cap 
	\sfL_{T}(r,c) \not= \emptyset$, then $a$ is not weakly larger than all 
	elements of $(r,c)$. After applying $\mathcal{V}_{b}$ the semistandardness 
	condition in the leg of $(r, c+1)$ will still hold as $a < x < z$ for all 
	$x \in (a, \ell] \cap \sfL_{T}(r,c)$, where $z$ is the smallest value 
	in $\sfL_{T}(r,c+1)$. Similarly, the semistandardness condition in the arm of 
	$(r,c+1)$ holds as $a < k$ or $k$ is the empty character. Thus, the 
	semistandardness condition in each cell is preserved. The semistandardness 
	of row $r$ is preserved as all numbers strictly greater than $a$ in $(r,c)$ 
	are moved to $(r,c+1)$ along with $a$. The semistandardness condition 
	within column $c+1$ is preserved as every number in $(r+1, c+1)$ is 
	strictly greater than $\ell$ and every number in $(r-1, c+1)$ is strictly 
	less than $a$.
\end{description}
\end{proof}

\begin{definition} \label{def: uncrowd_insert}
The \defn{uncrowding insertion} $\mathcal{V} \colon \HVT \rightarrow \HVT$ is defined as
$\mathcal{V}(T) = \mathcal{V}_b^d(T)$, where the integer $d\geqslant 1$ is minimal such that 
$\shape(\mathcal{V}_b^d(T))/\shape(\mathcal{V}_b^{d-1}(T)) \neq \emptyset$ or 
$\mathcal{V}_b^d(T) = \mathcal{V}_b^{d-1}(T)$.
\end{definition}

A \defn{column-flagged increasing tableau} is a tableau whose transpose is a flagged increasing tableau. 
Let $\hat{\mathcal{F}}$ denote the set of all column-flagged increasing tableaux. Let $\hat{\mathcal{F}}(\mu/\lambda)$ denote the set of 
all column-flagged increasing tableaux of shape $\mu/\lambda$.

\begin{definition} \label{def: uncrowd_map}
Let $T \in \HVT(\lambda)$ with arm excess $\alpha$. The \defn{uncrowding map}
\[
	\mathcal{U} \colon \HVT(\lambda) \rightarrow \bigsqcup_{\mu\supseteq \lambda}\SVT(\mu) \times \hat{\mathcal{F}}(\mu/\lambda)
\]
is defined by the following algorithm:
\renewcommand\labelenumi{(\arabic{enumi})}
\renewcommand\theenumi\labelenumi
\begin{enumerate}
	\item Let $P_{0}=T$ and let $Q_{0}$ be the column-flagged increasing tableau of shape 
	$\lambda/\lambda$.
	\item
	For $1 \leqslant i \leqslant \alpha$, let $P_{i+1} = \mathcal{V}(P_{i})$. Let $c$ be the index of the rightmost 
	column of $P_{i}$ containing a cell with nonzero arm excess and let $\tilde{c}$ be the column index of the cell
	$\shape(P_{i+1})/\shape(P_i)$. Then $Q_{i+1}$ is obtained from $Q_{i}$ by appending the cell 
	$\shape(P_{i+1})/\shape(P_i)$ to $Q_{i}$ and filling this cell with $\tilde{c}- c$.
\end{enumerate}
Define $\mathcal{U}(T) = (P(T), Q(T)) := (P_{\alpha}, Q_{\alpha})$.
\end{definition}

\begin{example}\label{eg: uncrowding}
    Let $T$ be the hook-valued tableau
\[
    {\def\mc#1{\makecell[lb]{#1}}
{\begin{array}[lb]{*{3}{|l}|}\cline{1-1}
\mc{8\\67}\\\cline{1-2}
\mc{5\\4\\233}&\mc{66}\\\cline{1-3}
\mc{1}&\mc{2\\11}&\mc{7\\5}\\\cline{1-3}
\end{array}}}
    \]
    Then, we obtain the following sequence of tableaux $\mathcal{V}_b^i(T)$ for $0\leqslant i\leqslant 2=d$ when computing
    the first uncrowding insertion:

\[
    {\def\mc#1{\makecell[lb]{#1}}
{\begin{array}[lb]{*{3}{|l}|}\cline{1-1}
\mc{8\\67}\\\cline{1-2}
\mc{5\\4\\233}&\mc{66}\\\cline{1-3}
\mc{1}&\mc{2\\11}&\mc{7\\5}\\\cline{1-3}
\end{array}\rightarrow
\begin{array}[lb]{*{3}{|l}|}\cline{1-1}
    \mc{8\\67}\\\cline{1-2}
    \mc{5\\4\\233}&\mc{6}\\\cline{1-3}
    \mc{1}&\mc{2\\11}&\mc{6\\57}\\\cline{1-3}
    \end{array}\rightarrow
    \begin{array}[lb]{*{4}{|l}|}\cline{1-1}
        \mc{8\\67}\\\cline{1-2}
        \mc{5\\4\\233}&\mc{6}\\\cline{1-4}
        \mc{1}&\mc{2\\11}&\mc{6\\5}&\mc{7}\\\cline{1-4}
        \end{array} = \mathcal{V}(T).
}}  
\]
    Continuing with the remaining uncrowding insertions, we obtain the following sequences of tableaux for the
    uncrowding map:
    \[
    \begin{aligned}
    &   {\def\mc#1{\makecell[lb]{#1}}
    {\begin{array}[lb]{*{3}{|l}|}\cline{1-1}
    \mc{8\\67}\\\cline{1-2}
    \mc{5\\4\\233}&\mc{66}\\\cline{1-3}
    \mc{1}&\mc{2\\11}&\mc{7\\5}\\\cline{1-3}
    \end{array}}}
    & \rightarrow\; &
    {\def\mc#1{\makecell[lb]{#1}}
{\begin{array}[lb]{*{4}{|l}|}\cline{1-1}
\mc{8\\67}\\\cline{1-2}
\mc{5\\4\\233}&\mc{6}\\\cline{1-4}
\mc{1}&\mc{2\\11}&\mc{6\\5}&\mc{7}\\\cline{1-4}
\end{array}}}
    & \rightarrow\; &
    {\def\mc#1{\makecell[lb]{#1}}
{\begin{array}[lb]{*{5}{|l}|}\cline{1-1}
\mc{8\\67}\\\cline{1-2}
\mc{5\\4\\233}&\mc{6}\\\cline{1-5}
\mc{1}&\mc{1}&\mc{2\\1}&\mc{6\\5}&\mc{7}\\\cline{1-5}
\end{array}}}
    &\rightarrow\\
    &{\def\mc#1{\makecell[lb]{#1}}
    {\begin{array}[lb]{*{5}{|l}|}\cline{1-2}
    \mc{6}&\mc{8\\7}\\\cline{1-2}
    \mc{5\\4\\233}&\mc{6}\\\cline{1-5}
    \mc{1}&\mc{1}&\mc{2\\1}&\mc{6\\5}&\mc{7}\\\cline{1-5}
    \end{array}}}
    & \rightarrow\; &
    {\def\mc#1{\makecell[lb]{#1}}
{\begin{array}[lb]{*{5}{|l}|}\cline{1-2}
\mc{6}&\mc{8\\7}\\\cline{1-3}
\mc{23}&\mc{5\\4\\3}&\mc{6}\\\cline{1-5}
\mc{1}&\mc{1}&\mc{2\\1}&\mc{6\\5}&\mc{7}\\\cline{1-5}
\end{array}}}
    & \rightarrow\; &
    {\def\mc#1{\makecell[lb]{#1}}
{\begin{array}[lb]{*{6}{|l}|}\cline{1-2}
\mc{6}&\mc{8\\7}\\\cline{1-3}
\mc{2}&\mc{3}&\mc{5\\4\\3}\\\cline{1-6}
\mc{1}&\mc{1}&\mc{2\\1}&\mc{6\\5}&\mc{6}&\mc{7}\\\cline{1-6}
\end{array}}} & = P(T),
    \end{aligned}\]
    \[
    \ytableausetup{notabloids,boxsize=1.5em}
    \begin{aligned}
    &\raisebox{5mm}{\begin{ytableau}
        *(gray)\\
        *(gray) & *(gray)\\
        *(gray) & *(gray) & *(gray)
        \end{ytableau}} & \rightarrow\; &
    \raisebox{5mm}{\begin{ytableau}
        *(gray)\\
        *(gray) & *(gray)\\
        *(gray) & *(gray) & *(gray) & 2
        \end{ytableau}} & \rightarrow\; &
    \raisebox{5mm}{\begin{ytableau}
        *(gray)\\
        *(gray) & *(gray)\\
        *(gray) & *(gray) & *(gray) & 2 & 3
        \end{ytableau}} & \rightarrow \\
    &\raisebox{5mm}{\begin{ytableau}
        *(gray) & 1\\
        *(gray) & *(gray)\\
        *(gray) & *(gray) & *(gray) & 2 & 3
        \end{ytableau}} & \rightarrow\; &
    \raisebox{5mm}{\begin{ytableau}
        *(gray) & 1\\
        *(gray) & *(gray) & 2\\
        *(gray) & *(gray) & *(gray) & 2 & 3
        \end{ytableau}} & \rightarrow\; &
    \raisebox{5mm}{\begin{ytableau}
        *(gray) & 1\\
        *(gray) & *(gray) & 2\\
        *(gray) & *(gray) & *(gray) & 2 & 3 & 5
        \end{ytableau}} & = Q(T).
    \end{aligned}\]

\end{example}
\begin{corollary} \label{well_defined_p}
Let $T \in \HVT$. Then $P(T)$ is a set-valued tableau.
\end{corollary}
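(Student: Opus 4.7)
The plan is to induct on the arm excess $\alpha$ of $T$. If $\alpha = 0$, then by Definition~\ref{def: uncrowd_map} we have $P(T) = P_0 = T$, which already has arm excess zero and is therefore a set-valued tableau. For $\alpha \geq 1$, it suffices to show that a single application of $\mathcal{V}$ reduces the total arm excess by exactly one; since $\mathcal{U}$ iterates $\mathcal{V}$ exactly $\alpha$ times, this forces $P(T) = P_\alpha$ to have arm excess $0$, which is precisely the definition of a set-valued tableau.

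To establish this reduction by one, I would analyze the four branches of $\mathcal{V}_b$ appearing in Definition~\ref{def: uncrowd_bump}. In the branches of Case~\ref{def: uncrowd_bump_a} or~\ref{def: uncrowd_bump_b} where $k$ is the empty character (so a fresh cell is attached atop column $c+1$), the entry $a$ is removed from $\sfA_T(r,c)$ and installed as the hook entry of the new cell with empty arm; any relocated leg entries do not contribute to arm excess, so the total arm excess drops by exactly one. In the complementary branches where $(\tilde{r}, c+1)$ is an existing cell, $a$ becomes the new hook entry of $(\tilde{r}, c+1)$ while the old hook $k$ is appended to $\sfA_T(\tilde{r}, c+1)$. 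The critical point is that $c$ is the rightmost column containing a cell with nonzero arm excess, so column $c+1$ has no arm excess before the step and in particular $\sfA_T(\tilde{r}, c+1)$ was empty; hence the single loss at $(r,c)$ is exactly compensated by a single gain at $(\tilde{r}, c+1)$, leaving total arm excess unchanged, while the index of the rightmost column with nonzero arm excess moves strictly from $c$ to $c+1$.

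The main (and essentially only) technical obstacle is verifying that the inner iteration $\mathcal{V}_b^d$ inside Definition~\ref{def: uncrowd_insert} terminates in the new-cell branch rather than by stabilization. This is where the previous paragraph provides the needed monovariant: every $\mathcal{V}_b$ step that does not change the shape strictly advances the rightmost column with nonzero arm excess one step to the right. Since the tableau has only finitely many columns at any given moment, after at most finitely many such pushes the cell being processed sits in the rightmost existing column, whose right neighbor is empty; the next $\mathcal{V}_b$ then necessarily falls into the new-cell branch. This shows $d$ is finite and that $\mathcal{V}(T)$ has arm excess exactly $\alpha - 1$.

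Combining, one iteration of $\mathcal{V}$ converts $T$ of arm excess $\alpha$ into $\mathcal{V}(T)$ of arm excess $\alpha - 1$, and the remaining $\alpha-1$ iterations of $\mathcal{V}$ in the definition of $\mathcal{U}$ constitute the uncrowding process applied to $\mathcal{V}(T)$. The inductive hypothesis applied to $\mathcal{V}(T)$ then yields that $P(T) = P_\alpha$ is a set-valued tableau, completing the proof.
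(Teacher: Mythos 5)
Your proposal is correct and takes essentially the same route as the paper: the paper's proof simply asserts the fact you verify in detail, namely that each uncrowding insertion $\mathcal{V}$ lowers the arm excess by exactly one (so $P(T)=\mathcal{V}^{\alpha}(T)$ has arm excess zero), your branch analysis and rightward-moving monovariant being a fleshed-out justification of that assertion and of the termination of $\mathcal{V}_b^d$ in Definition~\ref{def: uncrowd_insert}. The only thing to add is an explicit appeal to Lemma~\ref{lem: well_defined_v} (as the paper does) so that the intermediate tableaux, and hence $P(T)$, are genuine semistandard hook-valued tableaux, since being a set-valued tableau means being an $\HVT$ with arm excess zero, not merely having arm excess zero.
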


\begin{proof}
By Lemma \ref{lem: well_defined_v} and Definition \ref{def: uncrowd_insert}, we 
have that $\mathcal{V}(T)$ is a hook-valued tableau. Note that if 
the arm excess of $T$ is nonzero, then the arm excess of $\mathcal{V}(T)$ is 
one less than that of $T$. Since $P(T) = \mathcal{V}^{\alpha}(T)$, where 
$\alpha$ is the arm excess of $T$, we have that the arm excess of $P(T)$ is 
zero. Thus, $P(T)$ is a set-valued tableau.
\end{proof}

\begin{definition}
Let $T\in \HVT$ and let $d$ be minimal such that $\mathcal{V}(T) = \mathcal{V}_{b}^d(T)$. 
The \defn{insertion path} $p$ of $T\to \mathcal{V}(T)$ is defined as follows:
\begin{itemize}
	\item If $d = 0$, set $p = \emptyset$.
	\item Otherwise, let $(r_0,c_0)$ be the rightmost and topmost cell of $T$ containing 
	a cell with nonzero arm excess. For all 
	$1 \leqslant j \leqslant d$, let $c_{j} = c_{0}+j$ and let $r_{j} = 
	\tilde{r}$ be
	$\tilde{r}$ in Definition~\ref{def: uncrowd_bump} when $\mathcal{V}_{b}$ is applied to 
	$\mathcal{V}_{b}^{j-1}(T)$. Set $p = ((r_{0}, c_{0}), (r_{1}, c_{1}), \ldots, (r_{d}, c_{d}))$.
\end{itemize}
\end{definition}

\begin{lemma} \label{well_defined_q}
Let $T \in \HVT$. Then $Q(T)$ is a column-flagged increasing tableau.
\end{lemma}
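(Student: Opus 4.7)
The plan is to verify the four defining properties of a column-flagged increasing tableau for $Q(T)$: that the cells form a valid skew shape $\mu/\lambda$, that entries in column $j$ are at most $j-1$, and that entries strictly increase along both rows and columns. The first two properties are immediate. Each iteration adds exactly one cell $\shape(P_i)/\shape(P_{i-1})$, and by Corollary~\ref{well_defined_p} the final shape $\mu = \shape(P(T))$ is a partition containing $\lambda$. The entry recorded at step $i$ is $\tilde{c}^{(i)} - c^{(i)}$, which is a positive integer (since at least one $\mathcal{V}_b$ step is applied per $\mathcal{V}$, so $\tilde{c}^{(i)} > c^{(i)}$) bounded above by $\tilde{c}^{(i)} - 1$ (since $c^{(i)} \geq 1$), satisfying the column flag condition.

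The key ingredient for both strictness conditions is a monotonicity lemma: the sequence $\{c^{(i)}\}$ is weakly decreasing. To prove it, I track the arm excess inside one invocation of $\mathcal{V}$. Each non-terminal $\mathcal{V}_b$ step shifts one unit of arm excess from column $c$ to column $c+1$ (via the non-empty $k$ appended to the arm of $(\tilde{r}, c+1)$), whereas the terminal $\mathcal{V}_b$ step creates a new cell with empty arm, absorbing one unit of arm excess entirely. Consequently, after $\mathcal{V}$ is applied to $P_{i-1}$, every column strictly to the right of $c^{(i)}$ in $P_i$ ends with zero arm excess, forcing $c^{(i+1)} \leq c^{(i)}$. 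Row-strictness is then immediate: if cells $(\tilde{r}, \tilde{c} - 1)$ and $(\tilde{r}, \tilde{c})$ both lie in $\mu/\lambda$, the partition shape condition dictates they are added at iterations $j < i$ respectively, and monotonicity gives $c^{(j)} \geq c^{(i)}$, yielding the strict comparison $(\tilde{c} - 1) - c^{(j)} < \tilde{c} - c^{(i)}$ of their entries.

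Column-strictness is the main obstacle. For cells $(\tilde{r} - 1, \tilde{c})$ and $(\tilde{r}, \tilde{c})$ in the same column of $\mu/\lambda$, added at iterations $j < i$, the monotonicity lemma only gives the weak bound $c^{(i)} \leq c^{(j)}$, whereas the strict inequality $c^{(i)} < c^{(j)}$ is required. The plan is to argue by contradiction: if $c^{(i)} = c^{(j)} = c$, monotonicity forces $c^{(k)} = c$ for every $j \leq k \leq i$. A case analysis on whether the topmost active cell of column $c$ persists from one iteration to the next, combined with the semistandardness of $P_{k-1}$, then shows that the sequence of initial bumped values is weakly decreasing, $a_j \geq a_{j+1} \geq \cdots \geq a_i$. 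Tracing iteration $i$'s insertion path, upon arriving at column $\tilde{c}$, the cell $(\tilde{r} - 1, \tilde{c})$ created in iteration $j$ (together with any intermediate modifications) contains a value at least $a_j \geq a_i$, providing a valid candidate $k$ in that column. Iteration $i$ therefore does not terminate at column $\tilde{c}$ but extends strictly beyond, contradicting $\tilde{c}^{(i)} = \tilde{c}^{(j)}$. The chief technical challenge is the careful bookkeeping needed to track the evolution of the cells in columns $c+1, \ldots, \tilde{c}$ through the intermediate iterations $j, j+1, \ldots, i-1$.
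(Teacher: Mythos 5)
Your overall architecture (weak decrease of the starting columns $c^{(i)}$, row-strictness and the column-flag bound deduced from it, and reducing column-strictness to ruling out two insertions that start in the same column and end in the same column) is the same as the paper's. However, the final step of your contradiction has a genuine gap. Whether iteration $i$ terminates at column $\tilde{c}$ is decided by comparing the value that \emph{arrives} at column $\tilde{c}$ --- call it $v_i$, the entry displaced out of column $\tilde{c}-1$ --- against the entries of column $\tilde{c}$; it is not decided by the initial bumped value $a_i$. The values inserted along an insertion path weakly increase (each new $k$ is the smallest entry weakly greater than the value just inserted), so $v_i$ can be strictly larger than $a_j$, and indeed larger than every entry of column $\tilde{c}$, even though $a_i \leqslant a_j$ and the cell $(\tilde{r}-1,\tilde{c})$ contains an entry at least $a_j$. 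Having an entry $\geqslant a_i$ in column $\tilde{c}$ therefore does not prevent iteration $i$ from creating a new cell on top of that column, which is exactly the event you must exclude; your stated inequalities are consistent with the configuration you are trying to contradict.

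What is needed --- and what the paper's proof supplies --- is an induction along the entire path: if two insertions start in the same column, then at every stage $s$ the later path lies weakly below the earlier one, because a weakly lower position (hence, by semistandardness and the changes the earlier insertion made, a weakly smaller bumped value) at stage $s$ forces a weakly lower landing row at stage $s+1$. Your observation that the initial bumped values satisfy $a_j \geqslant a_{j+1} \geqslant \cdots \geqslant a_i$ is precisely the base case of that induction; the inductive step, which propagates the comparison of the inserted values column by column through the intermediate modifications (including the fact that intermediate iterations may replace entries of $(\tilde{r}-1,\tilde{c})$ by smaller ones via a case~(b) bump), is exactly the ``careful bookkeeping'' you defer, and without it the contradiction does not go through. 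Once the weakly-below path comparison is established, the conclusion is immediate: if iteration $i$ ended in the same column as iteration $j$, its new cell would be the new top of that column, hence strictly above iteration $j$'s cell, contradicting the comparison.
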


\begin{proof}
By construction, the positive integer entries in column $i$ of $Q(T)$ 
are at most $i-1$. Let $m$ be the smallest nonnegative integer such that 
$\mathcal{V}^{m}(T) = P(T)$. Let $p^{i} = ((r_{0}^{i}, c_{0}^{i}), (r_{1}^{i}, c_{1}^{i}), 
\ldots, (r_{d_i}^{i}, c_{d_i}^{i}))$ for $0\leqslant i<m$ be the insertion path of
$\mathcal{V}^i(T) \to \mathcal{V}^{i+1}(T)$. Since $c_{0}^{i+1} \leqslant c_{0}^{i}$ for all 
$0\leqslant i <m$, the entries in each row of $Q(T)$ are strictly increasing.
To check that the entries in each column of $Q(T)$ are strictly increasing, it 
suffices to show that if $c_{0}^{i+1} = c_{0}^{i}$ then $p^{i+1}$ lies weakly 
below $p^{i}$. In other words, it suffices to check that $c_{0}^{i+1} = 
c_{0}^{i}$ implies that $r_{j}^{i+1}\leqslant r_{j}^{i}$ for all $0 \leqslant j \leqslant d_i$.
We prove this by induction on $j$. Note that $r_{0}^{i+1} \leqslant r_{0}^{i}$ by the 
definition of $\mathcal{U}$. Assume by induction that $r_{j}^{i+1} \leqslant r_{j}^{i}$. This 
implies that the $a$ when applying $\mathcal{V}_{b}$ to 
$\mathcal{V}_{b}^{j}(\mathcal{V}^{i}(T))$ is weakly smaller than the $a$ 
when applying $\mathcal{V}_{b}$ to $\mathcal{V}_{b}^{j}(\mathcal{V}^{i-1}(T))$. 
Thus, we must have $r_{j+1}^{i+1} \leqslant r_{j+1}^{i}$.
\end{proof}

\subsection{Properties of the uncrowding map}
\label{section.uncrowding properties}

Let $T$ be a hook-valued tableau. Define $R_{i}(T)$ as the induced subword of $R(T)$ consisting only
of the letters $i$ and $i+1$. In the next lemma, we use the same notation as in Definition~\ref{def: uncrowd_bump}. Furthermore,
two words are \defn{Knuth equivalent} if one can be transformed to the other by a sequence of Knuth equivalences on three consecutive letters
\[
	xzy \equiv zxy \quad \text{for $x\leqslant y<z$,} \qquad yxz \equiv yzx \quad \text{for $x<y\leqslant z$.}
\]

\begin{lemma}\label{lem: knuth}
For $T\in \HVT$, $R_{i}(T) = R_{i}(\mathcal{V}_{b}(T))$ unless $T$ satisfies one of the following three conditions:
\begin{enumerate}
	\item[(a)] $a = i$ or $a = i+1$ and column $c+1$ contains both an $i$ and an $i+1$,
	\item[(b)] $\tilde{r} = r$, $i \in (a, \ell] \cap \sfL_{T}(r, c)$, $k = i$, and 
	column 
	$c+1$ contains an $i+1$,
	\item[(c)] $\tilde{r} = r$, $a = i$, $i+1 \in (a, \ell] \cap \sfL_{T}(r, c)$, 
	and 
	$(r, c)$ contains another $i$ besides $a$.
\end{enumerate}
Moreover, $R_{i}(T)$ is Knuth equivalent to $R_{i}(\mathcal{V}_{b}(T))$.
\end{lemma}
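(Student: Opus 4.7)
My plan is to first observe that $\mathcal{V}_b$ modifies only columns $c$ and $c+1$ of $T$, so all other columns contribute identically to $R(T)$ and $R(\mathcal{V}_b(T))$. Hence it suffices to compare the $\{i,i{+}1\}$-subwords of the two-column factor $R(C_c)\,R(C_{c+1})$ before and after the operation. The key structural fact I would exploit is that within any $R(C_j)$ the extended legs are read cell-by-cell from top to bottom, and only then are the arm entries concatenated in weakly increasing order; thus the position of a letter in $R(C_j)$ is determined by whether it sits in the ``extended-leg block'' or in the ``arm block'' of that column.

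The argument would then split on whether Step~\ref{def: uncrowd_bump_a} or Step~\ref{def: uncrowd_bump_b} of Definition~\ref{def: uncrowd_bump} is applied. In the first case ($\tilde{r}\neq r$), a single letter $a$ migrates from the arm of $(r,c)$ into the extended leg of $(\tilde{r},c+1)$, possibly displacing an entry $k$ into the arm of that same cell. Running through the subcases according to whether $a$ and $k$ lie in $\{i,i{+}1\}$, I would verify that the shift is invisible to $R_i$ except precisely when $a\in\{i,i{+}1\}$ \emph{and} $C_{c+1}$ contains both $i$ and $i{+}1$---in which case $a$ must cross an $i$ or an $i{+}1$ of $C_{c+1}$ as it changes columns, genuinely permuting the subword. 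This is condition~(a). In every other subcase, the motion is a ``same-letter slide'' that preserves $R_i$.

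The second case ($\tilde{r}=r$) is parallel but more delicate, since now $a$ leaves the arm of $(r,c)$, the segment $(a,\ell]\cap \sfL_T(r,c)$ migrates from the leg of $(r,c)$ to the extended leg of $(r,c+1)$, the hook entry of $(r,c+1)$ becomes $a$, and $k$ (when nonempty) is appended to the arm of $(r,c+1)$. A parallel subcase analysis should show that $R_i$ is preserved unless (b) the migrating leg segment contains an $i$ with $k=i$ while $C_{c+1}$ contains an $i{+}1$, or (c) $a=i$ while $i{+}1$ is among the migrating entries and $(r,c)$ retains a second $i$. In both (b) and (c) the simultaneous relocations nontrivially shuffle the $\{i,i{+}1\}$-subword; in all other configurations one obtains $R_i(T)=R_i(\mathcal{V}_b(T))$ directly.

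Finally, the Knuth equivalence assertion is automatic outside (a), (b), (c) by equality of words. For each exceptional case I would pinpoint the adjacent triple of letters in $R_i$ that gets modified and check that the modification matches one of the two available Knuth moves on the two-letter alphabet $\{i,i{+}1\}$, namely $i(i{+}1)i \equiv (i{+}1)ii$ or $(i{+}1)i(i{+}1) \equiv (i{+}1)(i{+}1)i$. The hardest part will be the bookkeeping for Step~\ref{def: uncrowd_bump_b}: with several leg entries possibly migrating at once, one must verify that their cumulative effect on the $\{i,i{+}1\}$-subword reduces to exactly one adjacent transposition. This will require carefully tracking the position of $k$ within the extended leg of $(\tilde{r},c+1)$ and invoking the strict column inequalities of hook-valued tableaux to control which letters of $C_{c+1}$ sit above and below the relocated material.
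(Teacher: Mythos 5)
Your skeleton is the same as the paper's: restrict attention to columns $c$ and $c+1$, split on whether Step~\ref{def: uncrowd_bump_a} or Step~\ref{def: uncrowd_bump_b} applies, and run subcases on whether $a$ and $k$ lie in $\{i,i+1\}$ (the paper merely organizes the same subcases by the value of $a$ first). The problem is that two of the structural claims you intend to verify are false as stated, and the decisive verifications are only promised. First, condition (a) is not confined to the branch $\tilde{r}\neq r$: when $\tilde{r}=r$ and $(a,\ell]\cap\sfL_{T}(r,c)=\emptyset$ the bump behaves exactly like the $\tilde{r}\neq r$ case, and with $a=i$, $k=\sfH_{T}(r,c+1)=i$ and an $i+1$ in the leg of $(r,c+1)$ the $\{i,i+1\}$-subword genuinely changes, from $\cdots i\,(i+1)\,i\cdots$ to $\cdots (i+1)\,i\,i\cdots$. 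So your assertion that in the $\tilde{r}=r$ branch ``in all other configurations one obtains $R_i(T)=R_i(\mathcal{V}_b(T))$ directly'' outside (b) and (c) is wrong; the paper treats exactly these situations in the empty-intersection subcases of its Cases 2 and 3.

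Second, your plan for the Knuth-equivalence check --- pinpoint the modified adjacent triple and match it to a single elementary Knuth move, i.e.\ verify that the cumulative effect ``reduces to exactly one adjacent transposition'' --- fails in exceptional case (c). There $a=i$, the $i+1$ of $\sfL_{T}(r,c)$ migrates to column $c+1$, and $(r,c)$ contains $\gamma\geqslant 2$ letters $i$ counting $a$; the subword changes from $(i+1)\,i^{\gamma}$ to $i^{\gamma-1}\,(i+1)\,i$, which for $\gamma\geqslant 3$ is not one adjacent transposition but a chain of moves $ (i+1)\,i\,i\equiv i\,(i+1)\,i$ applied repeatedly. The lemma is still true, but the invariant you propose to check is the wrong one, so this step of your argument would break as written. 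Finally, the substance of the lemma is the verification, in every non-exceptional subcase, that the subword is \emph{literally} unchanged (for instance, when $a\notin\{i,i+1\}$ and $k=i$ one must argue that $k$ is the only $i$ in column $c+1$, so moving it from the extended leg to the arm of $(\tilde{r},c+1)$ does not carry it past any $i+1$); your proposal defers all of these checks (``I would verify'', ``should show''), so as it stands it is an outline of the paper's case analysis rather than a complete proof.
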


\begin{proof}
Let $R_{i}(T) = r_{1} r_{2} \ldots r_{m}$. We break into cases based on the value of $a$.

\medskip \noindent
\textbf{Case 1:} Assume $a \not = i, i+1$. \\
	Assume Step~\ref{def: uncrowd_bump_a} is applied by $\mathcal{V}_{b}$. If 
	$k \not = i, i+1$, then $R_{i}(T) = R_i(\mathcal{V}_{b}(T))$ as the 
	position of all letters $i$ and $i+1$ remains the same. Let $k = i$. We have 
	that $k$ is the only $i$ in column $c+1$. Hence, when $k$ gets bumped from 
	$\sfL_{T}(\tilde{r}, c+1)$ and appended to $\sfA_{T}(\tilde{r}, c+1)$, the 
	relative position of $k$ to the other letters $i$ and $i+1$ in $R_{i}(T)$ does 
	not change. Thus, $R_{i}(T) = R_i(\mathcal{V}_{b}(T))$. Let $k = i+1$. Note 
	that column $c+1$ cannot have a cell containing an $i$ as $k$ is the 
	smallest number weakly greater than $a$. Hence, moving $k$ from 
	$\sfL_{T}(\tilde{r}, c+1)$ to $\sfA_{T}(\tilde{r}, c+1)$ will not change 
	$R_i(T)$. Therefore, we once again have that $R_{i}(T) = 	R_i(\mathcal{V}_{b}(T))$.

	Assume Step~\ref{def: uncrowd_bump_b} is applied by $\mathcal{V}_{b}$. 
	Consider the subcase when $(a, \ell] \cap \sfL_{T}(r, c) = \emptyset$. By a 
	similar argument to the previous paragraph, we have that $R_{i}(T) = 
	R_i(\mathcal{V}_{b}(T))$. Next, consider the subcase when $i+1 \in (a, \ell] 
	\cap \sfL_{T}(r, c)$. This implies that $a < i$ and the only time $i+1$ 
	occurs in column $c$ is in $\sfL_{T}(r, c)$. Note that if an $i$ exists in column 
	$c$, it must be contained in $\sfL_{T}(r, c)$. We also have that $k \geqslant i+1$ 
	or $k$ is the empty character
	and no cell in column $c+1$ contains an $i$. Thus, removing $(a, \ell] \cap 
	\sfL_{T}(r, c)$ from $\sfL_{T}(r, c)$, replacing $k$ with $(a, \ell] \cap 
	\sfL_{T}(r, 	c)$ in $\sfL_{T}(r, c+1)$, and appending $k$ to $\sfA_{T}(r, c+1)$ does not 
	change $R_i(T)$. Therefore $R_{i}(T) = R_i(\mathcal{V}_{b}(T))$. Let $i \in 
	(a, \ell] \cap \sfL_{T}(r, c)$ and $i+1 \not \in (a, \ell] \cap \sfL_{T}(r, c)$. 
	Note that the only place $i+1$ can occur in column $c$ is as $\sfH_{T}(r+1, c)$ 
	and the only place $i$ can occur in column $c$ is in $\sfL_{T}(r, c)$. This 
	implies that removing $(a, \ell] \cap \sfL_{T}(r, c)$ from $\sfL_{T}(r, c)$, 
	replacing $k$ with $(a, \ell] \cap \sfL_{T}(r, c)$ in $\sfL_{T}(r, c+1)$ 
	and appending $k$ to $\sfA_{T}(r, c+1)$ will not change $R_i(T)$ unless both 
	$i+1$ and $i$ show up in column $c+1$. This can only occur when $k = i$ which 
	implies that $R_i(T) = r_{1} \ldots i \,\, i+1 \,\, k \ldots r_{m}$ and 
	$R_i(\mathcal{V}_{b}(T)) = r_{1} \dots i+1 \, \, i \, \, k \ldots r_{m}$. 
	We see that $R_i(T)$ and $R_i(\mathcal{V}_{b}(T))$ only differ by a Knuth 
	relation implying they are Knuth equivalent. Assume that $i, i+1 \not \in 
	(a, \ell] \cap \sfL_{T}(r, c) \not = \emptyset$. If $a > i+1$ the positions of all 
	letters $i$ and $i+1$ remain the same after $\mathcal{V}_{b}$ is applied. If $a 
	< i$, then the positions of all letters $i$ and $i+1$ also remain the same 
	unless $k = i$ or $k = i+1$. In both of these special subcases, it can be checked
	that still $R_{i}(T) = R_i(\mathcal{V}_{b}(T))$.

\medskip \noindent
\textbf{Case 2:} Assume $a = i$. \\
	Assume Step~\ref{def: uncrowd_bump_a} is applied by $\mathcal{V}_{b}$. If 
	column $c+1$ does not contain both an $i$ and an $i+1$, then we have
	$R_{i}(T) = R_i(\mathcal{V}_{b}(T))$. However, if both an $i$ and an $i+1$ are in column 
	$c+1$, then $R_{i}(T) = r_{1} \dots i \, \, i+1 \, \, i \ldots r_{m}$ and 
	$R_{i}(\mathcal{V}_{b}(T)) = r_{1} \dots i+1 \, \, i \, \, i \ldots r_{m}$ which are Knuth equivalent.

	Assume Step~\ref{def: uncrowd_bump_b} is applied by $\mathcal{V}_{b}$. 
	Consider the subcase when $(a, \ell] \cap \sfL_{T}(r, c) = \emptyset$. By a 
	similar argument to the previous paragraph, we have that $R_{i}(T) = 
	R_i(\mathcal{V}_{b}(T))$ unless both an $i$ and an $i+1$ are in column 
	$c+1$ in which case $R_{i}(T)$ and $R_{i}(\mathcal{V}_{b}(T))$ are only 
	Knuth equivalent. Consider the subcase given by $i+1 \in (a, \ell] \cap 
	\sfL_{T}(r, c)$. Note that no cell in column $c+1$ can contain an $i$, the 
	only cell that could contain an $i+1$ in column $c+1$ is $(r, c+1)$, and 
	the only cell containing letters $i$ or $i+1$ in column $c$ is $(r, c)$. This 
	implies that it suffices to look at the changes to $(r, c)$ and $(r, c+1)$. 
	We see that $R_{i}(T) = r_{1} \ldots i +1 \,\, \underbrace{i \ldots i \,
	a}_{\gamma} \ldots r_{m}$ and $R_{i}(\mathcal{V}_{b}(T)) = r_{1} \ldots 
	\underbrace{i \ldots i}_{\gamma-1} \,\, i+1 \,\, a $ where $\gamma \geqslant 1$ 
	is the number of letters $i$ in cell $(r, c)$ including $a$. We see that 
	$R_{i}(T)$ and $R_{i}(\mathcal{V}_{b}(T))$ are Knuth equivalent. Consider 
	the subcase when $i+1\not \in (a, \ell] \cap \sfL_{T}(r, c) \not = \emptyset$. 
	We have that both $i$ and $i+1$ cannot be in a cell in column $c+1$ and an 
	$i+1$ cannot be in column $c$. Thus applying $\mathcal{V}_{b}$ does not 
	change $R_{i}(T)$ giving us that $R_{i}(T) = R_i(\mathcal{V}_{b}(T))$.

\medskip\noindent
\textbf{Case 3:} Assume $a=i+1$.\\
	Assume Step~\ref{def: uncrowd_bump_a} is applied by $\mathcal{V}_{b}$. 
	If column $c+1$ does not contain both $i$ and $i+1$, then we have 
	that $R_{i}(T) = R_i(\mathcal{V}_{b}(T))$. However, if both $i$ and $i+1$ occur
	in column $c+1$, then $R_{i}(T) = r_{1} \dots i+1 \, \, i+1 \, \, i \ldots r_{m}$ 
	and $R_{i}(\mathcal{V}_{b}(T)) = r_{1} \dots i+1 \, \, i \, \, i+1 \ldots r_{m}$ which are 
	Knuth equivalent.

	Assume Step~\ref{def: uncrowd_bump_b} is applied by $\mathcal{V}_{b}$. If 
	$(a, \ell] \cap \sfL_{T}(r, c) = \emptyset$, then $R_{i}(T) = 
	R_i(\mathcal{V}_{b}(T))$ unless both $i$ and $i+1$ occur in column 
	$c+1$. In this exceptional case, we have that $R_{i}(T)$ and 
	$R_{i}(\mathcal{V}_{b}(T))$ are only Knuth equivalent by a similar argument 
	to the previous paragraph. If $(a, \ell] \cap \sfL_{T}(r, c) \not = \emptyset$, 
	then $k > i+1$ or $k$ is the empty character
	and no cell in column $c+1$ contains an $i+1$. Thus applying 
	$\mathcal{V}_{b}$ does not change $R_{i}(T)$ giving us that $R_{i}(T) = 
	R_i(\mathcal{V}_{b}(T))$.
\end{proof}

\begin{remark}
In general, the full reading words are not Knuth equivalent under the 
uncrowding map. For example, take the following hook-valued tableau $T$, which 
uncrowds to a set-valued tableau $S$: 
\[	
{\def\mc#1{\makecell[lb]{#1}}
	{T = \begin{array}[lb]{*{2}{|l}|}\cline{1-2}
	\mc{4\\3\\2\\1 2}&\mc{5\\4}\\\cline{1-2}
	\end{array}} \rightarrow
	{\begin{array}[lb]{*{3}{|l}|}\cline{1-3}
	\mc{2\\1}&\mc{4\\3\\2}&\mc{5\\4}\\\cline{1-3}
	\end{array} = S.}
	}
	\]
The reading word changed from $4321254$ to $2143254$, which are not Knuth equivalent.
\end{remark}

\begin{proposition}
\label{proposition.main}
Let $T \in \HVT$. 
\begin{enumerate}
\item If $f_i(T) = 0$, then $f_i(P(T)) = 0$.
\item If $e_i(T) = 0$, then $e_i(P(T)) = 0$.
\end{enumerate}
\end{proposition}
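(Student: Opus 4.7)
\medskip

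\noindent\textbf{Proof plan.}
The plan is to deduce both statements as immediate consequences of Lemma~\ref{lem: knuth}, iterated along the full uncrowding procedure. First, recall that by Definition~\ref{def: uncrowd_insert} we have $\mathcal{V}(T)=\mathcal{V}_b^d(T)$ for some $d\geqslant 0$, and by Definition~\ref{def: uncrowd_map} we have $P(T)=\mathcal{V}^\alpha(T)$, where $\alpha$ is the arm excess of $T$. Applying Lemma~\ref{lem: knuth} once at every bumping step and iterating yields that $R_i(T)$ and $R_i(P(T))$ are Knuth equivalent as words.

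Next, I would observe that Knuth equivalence restricted to the two-letter alphabet $\{i,i+1\}$ preserves the pairing that defines $\varphi_i$ and $\varepsilon_i$. Indeed, the only Knuth moves available on this alphabet are
\[
    i(i{+}1)i\;\equiv\;(i{+}1)ii \qquad\text{and}\qquad (i{+}1)i(i{+}1)\;\equiv\;(i{+}1)(i{+}1)i,
\]
and in each case a direct check (reading $i\mapsto -$ and $i+1\mapsto +$ and performing the adjacent bracket matching of Definition~\ref{def: hvt_crystal}) shows that the number of unpaired $-$'s and the number of unpaired $+$'s are the same on both sides. Therefore, for any two Knuth equivalent words in $\{i,i+1\}^*$, the bracket matching produces the same number of unpaired $-$ and $+$ signs.

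Now I would combine these two observations with the fact that the crystal operators on $\HVT$ and on $\SVT$ are both computed from the column reading word via the \emph{same} pairing procedure in Definition~\ref{def: hvt_crystal} (since $\SVT\subseteq\HVT$). By Corollary~\ref{well_defined_p}, $P(T)$ is a set-valued tableau, so $\varphi_i(P(T))$ and $\varepsilon_i(P(T))$ are also computed from $R_i(P(T))$ by the same rule. Consequently
\[
    \varphi_i(T)=\varphi_i(P(T)) \qquad\text{and}\qquad \varepsilon_i(T)=\varepsilon_i(P(T)).
\]
Since $f_i(T)=0$ is equivalent to $\varphi_i(T)=0$ and analogously for $e_i$, both parts of the proposition follow at once.

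\medskip

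\noindent\textbf{Main obstacle.} The substantive combinatorial work has already been done in Lemma~\ref{lem: knuth}, so the only subtle point here is verifying that bracket matching is invariant under the two Knuth relations restricted to $\{i,i+1\}$; this is a short direct check but must be done carefully to cover both the $-$ count and the $+$ count. Everything else is bookkeeping: iterating the lemma over the (finitely many) bumping steps of $\mathcal{V}$ and over the $\alpha$ insertions making up $\mathcal{U}$, and recognizing that $\HVT$ and $\SVT$ share their crystal definition.
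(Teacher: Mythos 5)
Your proposal is correct and follows essentially the same route as the paper's proof: iterate Lemma~\ref{lem: knuth} and transitivity of Knuth equivalence to get that $R_i(T)$ and $R_i(P(T))$ are Knuth equivalent, then use that the $i$-pairing (and hence the count of unpaired $-$ and $+$) is invariant under Knuth moves on the alphabet $\{i,i+1\}$. Your version merely makes the invariance check explicit and records the slightly stronger conclusion $\varphi_i(T)=\varphi_i(P(T))$, $\varepsilon_i(T)=\varepsilon_i(P(T))$, which the paper leaves implicit.
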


\begin{proof}
Since $P(T) = \mathcal{V}_{b}^s(T)$ for some $s \in \mathbb{N}$ and Knuth equivalence 
is transitive, we have that $R_{i}(T)$ is Knuth equivalent to $R_{i}(P(T))$ by the previous lemma. 
As $f_i(T) = 0$, we have that every $i$ in $R_{i}(T)$ is $i$-paired with an $i+1$ to its left. 
This property is preserved under Knuth equivalence giving us that $f_i(P(T)) = 0$.
The same reasoning implies (2).
\end{proof}

\begin{lemma}\label{intertwine_lemma}
Let $T \in \HVT$. 
\begin{enumerate}
\item
If $f_i(T) \not = 0$, then $f_i(\mathcal{V}_{b}(T)) = \mathcal{V}_{b}(f_i(T)) \not = 0$.
\item
If $e_i(T) \not = 0$, then $e_i(\mathcal{V}_{b}(T)) = \mathcal{V}_{b}(e_i(T)) \not = 0$.
\end{enumerate}
\end{lemma}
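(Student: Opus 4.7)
The plan is to combine Lemma~\ref{lem: knuth} with a careful analysis of how the target cell of $f_i$ interacts with the two cells modified by $\mathcal{V}_b$. Knuth equivalence on the subword $R_i$ preserves the matching of $-$'s and $+$'s, hence preserves the statistics $\varphi_i$ and $\varepsilon_i$. Combined with Lemma~\ref{lem: knuth} this gives $\varphi_i(T) = \varphi_i(\mathcal{V}_b(T))$ and $\varepsilon_i(T) = \varepsilon_i(\mathcal{V}_b(T))$; in particular $f_i(T) \neq 0$ implies $f_i(\mathcal{V}_b(T)) \neq 0$, and likewise for $e_i$, handling the non-annihilation half of both statements.

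For the commutation itself, the approach is a case analysis organized by which branch \ref{def: uncrowd_bump_a} or \ref{def: uncrowd_bump_b} of $\mathcal{V}_b$ applies and by the position of the cell $c^{*}$ containing the $i$ that $f_i$ acts on in $T$. First I would dispatch the generic situation in which $c^{*}$ lies outside the active region $\{(r,c),(\tilde r, c+1)\}$ of $\mathcal{V}_b$ together with the cells immediately above or to the right that are queried by rules (M) and (S) of Definition~\ref{def: hvt_crystal}. In that generic situation, by Lemma~\ref{lem: knuth} we have $R_i(T) = R_i(\mathcal{V}_b(T))$, so the target $i$ occupies the same cell before and after bumping, the local data needed by rules (M), (S), (N) is untouched, and the two operations manifestly act on disjoint pieces of the tableau and thus commute.

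The bulk of the work lies in the interaction cases, where $c^{*}$ is one of $(r,c)$, $(\tilde r, c+1)$, or a cell directly above them (relevant for rule (M)) or immediately to the left of $(\tilde r, c+1)$ (relevant for rule (S)). For each such configuration I would enumerate which of the rules (M), (S), (N) governs $f_i$ acting on $T$ versus acting on $\mathcal{V}_b(T)$ (these may differ), and check cell by cell that $f_i \circ \mathcal{V}_b(T)$ and $\mathcal{V}_b \circ f_i(T)$ coincide. The comparison is driven by the observation that the $i$ being bumped by $\mathcal{V}_b$, the $i$ being toggled by $f_i$, and the entries $a$, $k$, $\ell$ in Definition~\ref{def: uncrowd_bump} are all constrained by the semistandard structure and by the Knuth analysis of Lemma~\ref{lem: knuth} to sit in a small number of possible configurations.

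The hard part will be the three exceptional configurations (a), (b), (c) of Lemma~\ref{lem: knuth}: precisely there $R_i$ undergoes a genuine Knuth move, so the rightmost unpaired $-$ may be realized by a physically different instance of $i$ in $T$ versus $\mathcal{V}_b(T)$, and the rule chosen by $f_i$ can switch between (M), (S) and (N) across the bump. In each of these three subcases I would compute both compositions explicitly on the local picture and verify agreement. Once the $f_i$ claim is settled, the $e_i$ claim follows by the symmetric argument: because $\varphi_i$ and $\varepsilon_i$ are preserved, $e_i$ acts as the left inverse of $f_i$ on the relevant strings in the crystal, and applying the commutation for $f_i$ to $e_i(T)$ and to $e_i(\mathcal{V}_b(T))$ yields part (2).
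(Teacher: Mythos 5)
Your outline follows the same route as the paper's own proof: use Lemma~\ref{lem: knuth} to control the $i,i{+}1$ subword, then run a case analysis on where the cell holding the rightmost unpaired $i$ sits relative to the cells touched by $\mathcal{V}_b$, deducing part (2) from part (1) via the partial-inverse property. The non-annihilation argument via invariance of $\varphi_i,\varepsilon_i$ under Knuth moves and the ``disjoint supports'' reduction for the generic position are both sound.

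The gap is that the interaction cases are only promised, not carried out, and they are essentially the entire content of the lemma; moreover, your localization of the difficulty is inaccurate. The delicate configurations are not primarily the exceptional cases (a)--(c) of Lemma~\ref{lem: knuth}: in most of the hard situations one has $R_i(T)=R_i(\mathcal{V}_b(T))$ as words, yet commutation is still nontrivial because the physical location of the rightmost unpaired $i$ moves (for instance when the bumped arm letter $a$ is itself that $i$ and travels into column $c+1$), because the rule (M)/(S)/(N) invoked by $f_i$ changes across the bump, and --- a point your plan never addresses --- because the data governing $\mathcal{V}_b$ on $f_i(T)$, namely the starting cell, $a'$, $k'$, $\tilde r'$ and $(a',\ell']\cap\sfL_{f_i(T)}(r',c')$, can differ from the corresponding data for $T$. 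A concrete instance: if the rightmost unpaired $i$ is the arm entry $a$ of $(r,c)$ and $(r+1,c)$ contains an $i+1$, then $f_i$ deposits an $i+1$ into the arm of $(r+1,c)$, so $\mathcal{V}_b$ applied to $f_i(T)$ begins its bump from a different cell than it does on $T$, and one must verify by direct computation that the two compositions nevertheless produce the same tableau. The paper's proof consists precisely of this verification (six cases according to the position of the unpaired $i$, with many subcases when it lies in $(r,c)$ or in column $c+1$, including checks like ``$a\neq i+1$ in Case 4'' that rely on the bracketing structure of the reading word). To complete your argument you would need to enumerate all configurations in which either the unpaired $i$ or the bump data changes and compute both compositions in each, i.e., reproduce that case analysis rather than lean on Lemma~\ref{lem: knuth} alone.
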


\begin{proof}
We are going to prove (1). Part (2) follows since $e_i$ and $f_i$ are partial inverses.

Let $a$, $\ell$, $k$, $r$, $c$, and $\tilde{r}$ be defined as in Definition \ref{def: uncrowd_bump} 
when $\mathcal{V}_{b}$ is applied to $T$. Similarly, define $a'$, $\ell'$, 
$k'$, $r'$, $c'$, and $\tilde{r}'$ for when $\mathcal{V}_{b}$ is applied to 
$f_{i}(T)$. Let $R_{i}(T) = r_{1} r_{2} \ldots r_{m}$ and $R_{i}(\mathcal{V}_{b}(T)) = r_{1}' r_{2}' \ldots r_{m}'$
be the corresponding reading words. Let $(\hat{r}, 
\hat{c})$ denote the cell containing the rightmost unpaired $i$ in $T$, where 
$\hat{r}$ and $\hat{c}$ are its row and column index respectively. We break 
into cases based on the position of $(\hat{r}, \hat{c})$ to $(r,c)$.
\begin{description}
	\item[Case 1] Assume $(\hat{r}, \hat{c}) = (r, c)$. We break into subcases based on how $f_i$ acts on $T$.
		\begin{itemize}
			\item Assume that $(r+1, c)$ contains an $i+1$.\\
As every entry in $(r,c)$ must be strictly smaller than the values in $(r+1, 
c)$ and $(r,c)$ must contain an $i$, we have that $\ell = i$ or $a = i$.
If $\ell = i$, then $\ell$ is $i$-paired with the $i+1$ in $(r+1,c)$. 
Hence $a$ is always equal to $i$ and $a$ must correspond to the 
rightmost unpaired $i$ of $T$. Thus, $f_{i}$ acts on $T$ by removing $a$ from 
$(r, c)$ and appending an $i+1$ to $\sfA_{T}(r+1, c)$. Note that $(a, \ell] 
\cap 
\sfL_{T}(r, c) = \emptyset$ implying $\mathcal{V}_{b}$ acts on $T$ by removing 
$a$ 
from $\sfA_{T}(r, c)$, replacing $k$ in $(\tilde{r}, c+1)$ with $a$, and 
appending 
$k$ to $\sfA_{T}(\tilde{r}, c+1)$ where $\tilde{r} \leqslant r$. We break into 
subcases 
based upon where the values of $i$ and $i+1$ are in column $c+1$ utilizing the 
fact that column $c+1$ cannot contain an $i$ without an $i+1$ (since the arm excess of cell
$(r+1,c)$ is zero and cell $(r,c)$ contains the rightmost unpaired $i$).\\

Assume that column $c+1$ does not contain an $i$. Since $a$ corresponds to 
the rightmost unpaired $i$ in $T$ and column $c+1$ does not contain an $i$, we 
have that the rightmost unpaired $i$ in $\mathcal{V}_{b}(T)$ is precisely $a$ 
in the cell $(\tilde{r}, c+1)$. Note that $(\tilde{r}+1, c+1)$ does not contain 
an $i+1$ in $\mathcal{V}_{b}(T)$ as $k \geqslant i+1$ or $k$ is the empty character. 
Similarly, we have that $(\tilde{r}, c+2)$ does not contain an $i$. Thus, $f_i$ 
acts on $\mathcal{V}_{b}(T)$ by changing $a$ to an $i+1$ in $(\tilde{r}, c+1)$. 
We now consider $\mathcal{V}_{b}(f_{i}(T))$. When applying $\mathcal{V}_{b}$ to 
$f_{i}(T)$, $a'$ is precisely the $i+1$ appended to $\sfA_{T}(r+1, c)$ and $k'$ 
is the same as $k$. Since $\tilde{r}' = \tilde{r} < r+1$, we have that 
$\mathcal{V}_{b}$ acts on $f_{i}(T)$ by removing $i+1$ from 
$\sfA_{f_{i}(T)}(r+1, 
c)$, replacing $k$ with an $i+1$ in $(\tilde{r}, c+1)$, and appending $k$ to 
$\sfA_{f_{i}(T)}(\tilde{r}, c+1)$. We see that $f_i(\mathcal{V}_{b}(T)) = 
\mathcal{V}_{b}(f_i(T))$.\\

Assume that column $c+1$ contains both an $i$ and an $i+1$ in the same cell. 
Note that this implies that $k = i$. Since $a$ is the rightmost unpaired $i$ 
in $T$ and the only cell in column $c+1$ that contained an $i+1$ or an $i$ is 
$(\tilde{r}, c+1)$, we have that the rightmost unpaired $i$ in 
$\mathcal{V}_{b}(T)$ is the $i$ appended to $\sfA_{T}(\tilde{r}, c+1)$. Since 
$(\tilde{r}, c+1)$ contains an $i+1$, we have that $(\tilde{r}+1, c+1)$ cannot 
contain an $i+1$ and $(\tilde{r}, c+2)$ cannot contain an $i$. Thus, $f_{i}$ 
acts on $\mathcal{V}_{b}(T)$ by changing the $i$ in 
$\sfA_{\mathcal{V}_{b}(T)}(\tilde{r}, c+1)$ to an $i+1$. We now consider 
$\mathcal{V}_{b}(f_{i}(T))$. When applying $\mathcal{V}_{b}$ to $f_{i}(T)$, 
$a'$ is precisely the $i+1$ appended to $\sfA_{T}(r+1, c)$ and $k'$ is the 
$i+1$ 
in $(\tilde{r}, c+1)$. Since $\tilde{r}' = \tilde{r} < r+1$, we have that 
$\mathcal{V}_{b}$ acts on $f_{i}(T)$ by removing $i+1$ from 
$\sfA_{f_{i}(T)}(r+1, c)$, replacing $i+1$ in $(\tilde{r}, c+1)$ with the $i+1$ from 
$\sfA_{f_{i}(T)}(r+1, c)$, and appending an $i+1$ to 
$\sfA_{f_{i}(T)}(\tilde{r}, 
c+1)$. We see that $f_i(\mathcal{V}_{b}(T)) = \mathcal{V}_{b}(f_i(T))$.\\

Assume that column $c+1$ contains both an $i$ and an $i+1$ in different cells. 
Note that this implies that $k = i$. Since $a$ corresponds to the rightmost 
unpaired $i$ in $R_{i}(T)$ and the only $i+1$ and $i$ in column $c+1$ are in 
cells $(\tilde{r}+1, c+1)$ and $(\tilde{r}, c+1)$ respectively, we have that the 
rightmost unpaired $i$ in $R_i(\mathcal{V}_{b}(T))$ corresponds to the $i$ 
appended to $\sfA_{T}(\tilde{r}, c+1)$. By assumption, we have that 
$(\tilde{r}+1, c+1)$ contains an $i+1$. Thus, $f_{i}$ acts on 
$\mathcal{V}_{b}(T)$ by removing the $i$ from 
$\sfA_{\mathcal{V}_{b}(T)}(\tilde{r}, c+1)$ and appending an $i+1$ to 
$\sfA_{\mathcal{V}_{b}(T)}(\tilde{r}+1, c+1)$. We now consider 
$\mathcal{V}_{b}(f_{i}(T))$. When applying $\mathcal{V}_{b}$ to $f_{i}(T)$, 
$a'$ is precisely the $i+1$ appended to $\sfA_{T}(r+1, c)$ and $k'$ is the 
$i+1$ 
in cell $(\tilde{r}+1, c+1)$. If $\tilde{r}' = r+1$, then $i+1$ is weakly 
larger than every value in $(r+1, c)$. Thus, either $(a', \ell'] \cap 
\sfL_{f_{i}(T)}(r+1, c) = \emptyset$ or $\tilde{r}' < r+1$. This implies that 
$\mathcal{V}_{b}$ acts on $f_{i}(T)$ by removing $i+1$ from 
$\sfA_{f_{i}(T)}(r+1, 
c)$, replacing the $i+1$ in $\sfH_{f_{i}(T)}(\tilde{r}+1, c+1)$ with the $i+1$ 
removed from $\sfA_{f_{i}(T)}(r+1, c)$, and appending an $i+1$ to 
$\sfA_{f_{i}(T)}(\tilde{r}+1, c+1)$. We see that $f_i(\mathcal{V}_{b}(T)) = 
\mathcal{V}_{b}(f_i(T))$.\\

\item Assume that $(r+1, c)$ does not contain an $i+1$ and $(r, c+1)$ contains an $i$.\\
Under these assumptions, we have that no cell in column $c$ can contain an 
$i+1$. This implies that column $c+1$ must contain an $i+1$. The cell $(r+1, 
c+1)$ cannot have an $i+1$ as this would force $(r+1, c)$ to also have an 
$i+1$. Thus, $(r, c+1)$ must contain an $i+1$ in its leg. By our assumption we 
have that $f_{i}$ acts on $T$ by removing the $i$ from $(r, c+1)$ and appending 
an $i+1$ to $\sfL_{T}(r, c)$. We break into subcases according to where the 
rightmost unpaired $i$ sits inside the cell $(r, c)$. If the rightmost unpaired 
$i$ is in $\sfH_{T}(r, c)$, then $a\geqslant i$ which would either contradict the hook 
entry being the rightmost unpaired $i$ or cell $(r, c+1)$ containing an $i$. 
Thus, we only need to consider the subcases where the rightmost unpaired $i$ is 
either in the leg or arm of $(r, c)$.\\

Assume that the rightmost unpaired $i$ is in $\sfL_{T}(r, c)$ for this entire 
paragraph. This implies that $\ell = i$. Since $(r, c+1)$ contains an $i$, we have 
that $a < i$. If $\tilde{r} < r$, then $\mathcal{V}_{b}$ acts on $T$ by 
removing $a$ from $(r,c)$, replacing $k$ with $a$ in $(\tilde{r}, c+1)$, and 
appending $k$ to $\sfA_{T}(\tilde{r}, c+1)$. Since $a, k < i$, we have that 
$\mathcal{V}_{b}$ does not change position of the rightmost unpaired $i$. Note 
that $(r+1, c)$ still does not contain an $i+1$ while $(r, c+1)$ still contains 
an $i$. Thus, $f_{i}$ acts on $\mathcal{V}_{b}(T)$ by removing the $i$ from 
$(r, c+1)$ and appending an $i+1$ to $\sfL_{\mathcal{V}_{b}(T)}(r, c)$. We now 
consider $\mathcal{V}_{b}(f_{i}(T))$. Note that $(r', c')$, $a'$, and $k'$ are 
the same as $(r, c)$, $a$, and $k$ respectively. Thus, $\mathcal{V}_{b}$ acts in 
the same way as before. This gives us that $f_i(\mathcal{V}_{b}(T)) = 
\mathcal{V}_{b}(f_i(T))$. If $\tilde{r} = r$, then $k$ is precisely the $i$ in 
cell $(r, c+1)$. We see that $\mathcal{V}_{b}$ acts on $T$ by removing $(a, 
i]\cap \sfL_{T}(r, c)$ from $\sfL_{T}(r, c)$ and $a$ from $\sfA_{T}(r,c)$, 
replacing 
$k$ with $\left((a, i]\cap \sfL_{T}(r, c)\right) \cup \{ a \}$, and appending $k$ to 
$\sfA_{T}(r+1, c)$. Since there is an $i+1$ in $\sfL_{\mathcal{V}_{b}(T)}(r, 
c+1)$, we see that the rightmost unpaired $i$ in $\mathcal{V}_{b}(T)$ is precisely $k$ 
in $\sfA_{\mathcal{V}_{b}(T)}(r, c+1)$. Note that $(r+1, c+1)$ does not contain an 
$i+1$ and $(r, c+2)$ does not contain an $i$ because $(r, c+1)$ contains an 
$i+1$. Thus, $f_i$ acts on $\mathcal{V}_{b}(T)$ by changing the $i$ in 
$\sfA_{\mathcal{V}_{b}(T)}(r, c+1)$ to an $i+1$. We now consider 
$\mathcal{V}_{b}(f_{i}(T))$. We have that $a'$ is the same as $a$ and $k'$ is 
the $i+1$ in $(r, c+1)$. We have $(a', \ell'] \cap \sfL_{f_{i}(T)}(r', c') = \{ i+1 
\} \cup ((a, i]\cap \sfL_{T}(r, c))$. This implies that $\mathcal{V}_{b}$ acts on 
$f_{i}(T)$ by removing $\{ i+1 \} \cup ((a, i]\cap \sfL_{T}(r, c))$ from 
$\sfL_{f_{i}(T)}(r, c)$ and $a$ from $\sfA_{f_{i}(T)}(r, c)$, replacing $i+1$ 
with 
$\{ i+1 \} \cup ((a, i]\cap \sfL_{T}(r, c)) \cup \{ a \}$ in $(r, c+1)$, and 
appending an $i+1$ to $\sfA_{f_{i}(T)}(r, c+1)$. We see that 
$f_i(\mathcal{V}_{b}(T)) = \mathcal{V}_{b}(f_i(T))$.\\

Assume that the rightmost unpaired $i$ is in $\sfA_{T}(r, c)$. This implies 
that 
$a = i$ and forces $a$ to correspond to the rightmost unpaired $i$. We also 
have that $k$ is the $i$ in $(r, c+1)$. Since $i$ is weakly greater than all 
values in $(r, c)$, we have that $(a, \ell]\cap \sfL_{T}(r, c) = \emptyset$. 
Thus, 
$\mathcal{V}_{b}$ acts on $T$ by removing $a$ from $(r, c)$, replacing $k$ with 
$a$ in $(r, c+1)$, and appending $k$ to $\sfA_{T}(r, c+1)$. Since $a$ was the 
rightmost unpaired $i$ in $T$ and cell $(r, c+1)$ contains an $i+1$ in its leg, 
we have that the rightmost unpaired $i$ in $\mathcal{V}_{b}(T)$ is $k$ in 
$\sfA_{\mathcal{V}_{b}(T)}(r, c+1)$. As $i+1$ is in $(r, c+1)$, we have that 
$(r+1, c+1)$ cannot contain an $i+1$ and $(r, c+2)$ cannot contain an $i$. This 
implies that $f_i$ acts on $\mathcal{V}_{b}(T)$ by changing the $i$ in 
$\sfA_{\mathcal{V}_{b}(T)}(r, c+1)$ to an $i+1$. We now consider 
$\mathcal{V}_{b}(f_{i}(T))$. We have that $a'$ is the same as $a$ and $k'$ is 
equal to the $i+1$ in $(r, c+1)$. Note that $(a',  \ell'] \cap \sfL_{T}(r, c) = 
\{ i+1 \}$. This implies that $\mathcal{V}_{b}$ acts on $f_{i}(T)$ by removing 
$i+1$ from $\sfL_{f_{i}(T)}(r, c)$ and $a$ from $\sfA_{f_{i}(T)}(r, c)$, 
replacing the $i+1$ in $(r, c+1)$ with $\{i+1, a\}$, and appending an $i+1$ to 
$\sfA_{f_{i}(T)}(r, c+1)$. We see that $f_i(\mathcal{V}_{b}(T)) = 
\mathcal{V}_{b}(f_i(T))$.\\

\item Assume that $(r+1, c)$ does not contain an $i+1$ and $(r, c+1)$ does not contain an $i$.\\
We break into subcases based on where the rightmost unpaired $i$ sits inside $(r, c)$.\\

Assume that the rightmost unpaired $i$ is in the hook entry of $(r, c)$ for the 
remainder of this paragraph. Note that this implies that $a > i$ and the 
rightmost unpaired $i$ in $\mathcal{V}_{b}(T)$ is still the hook entry of $(r, 
c)$. We see that $\mathcal{V}_{b}$ does not insert an $i+1$ into $(r+1, c)$ nor 
an $i$ into $(r, c+1)$. This implies that $f_{i}$ acts on $T$ and 
$\mathcal{V}_{b}(T)$ in the same way by changing the hook entry of $(r, c)$ 
into an $i+1$. Next, we note that $(r', c')$, $a'$, $k'$, and $(a',  \ell'] 
\cap 
\sfL_{f_{i}(T)}(r', c')$ are the same as $(r, c)$, $a$, $k$, and $(a, \ell] 
\cap 
\sfL_{T}(r, c)$ respectively. Thus, $\mathcal{V}_{b}$ acts on $T$ and 
$f_{i}(T)$ 
in the same manner without affecting the hook entry of $(r, c)$. Therefore, we 
have that the actions of $f_{i}$ and $\mathcal{V}_{b}$ on $T$ are independent 
and $f_i(\mathcal{V}_{b}(T)) = \mathcal{V}_{b}(f_i(T))$.\\

Assume that the rightmost unpaired $i$ is in the leg of $(r,c)$ for the 
remainder of this paragraph. This implies that $a \not = i$. First, we assume 
that $a > i$ or $\tilde{r} < r$. Under this extra assumption, we observe that 
the action of $\mathcal{V}_{b}$ does not change the position of the rightmost 
unpaired $i$. Also, $\mathcal{V}_{b}$ does not insert an $i+1$ into $(r+1, c)$ 
nor an $i$ into $(r, c+1)$. We see that $f_{i}$ acts on $T$ and 
$\mathcal{V}_{b}(T)$ in the same way by changing the $i$ in the leg of $(r, c)$ 
into an $i+1$. Next, we note that $(r', c')$, $a'$, and $k'$ are the same as 
$(r, c)$, $a$, and $k$ respectively. If $a >i$, we have that $a \geqslant i+1$ 
implying that $(a', \ell'] \cap \sfL_{f_{i}(T)}(r', c') = (a, \ell] \cap 
\sfL_{T}(r, 
c)$. 
Thus, either $(a', \ell'] \cap \sfL_{f_{i}(T)}(r', c') = (a, \ell] \cap 
\sfL_{T}(r, 
c)$ 
or 
$\tilde{r} < r$. This implies that $\mathcal{V}_{b}$ acts on $T$ and $f_{i}(T)$ 
in the same manner and does not affect the $i$ or $i+1$ in the leg of $(r, c)$. 
Therefore, we have that the actions of $f_{i}$ and $\mathcal{V}_{b}$ on $T$ are 
independent and $f_i(\mathcal{V}_{b}(T)) = \mathcal{V}_{b}(f_i(T))$. Next, 
assume that $\tilde{r} = r$ and $a < i$. This implies that $(a, \ell] \cap 
\sfL_{T}(r, c) \not = \emptyset$ as $i \in (a, \ell]\cap \sfL_{T}(r, c)$. We 
have 
that 
$\mathcal{V}_{b}$ acts on $T$ by removing $(a, \ell]\cap \sfL_{T}(r, c)$ from $\sfL_{T}(r, c)$ and $a$ from $\sfA_{T}(r, c)$, replacing $k$ with 
$((a, 
l]\cap \sfL_{T}(r, c)) \cup \{ a \}$ in $(r, c+1)$, and appending $k$ to 
$\sfA_{T}(r, c+1)$. By assumption, there was no $i$ in $(r, c+1)$ to begin 
with. 
Thus, we have that the rightmost unpaired $i$ of $\mathcal{V}_{b}(T)$ is the 
$i$ in $(r, c+1)$ that replaced $k$.  Since $k \geqslant i+1$ or $k$ is the empty 
character, we have that the cell $(r+1, c+1)$ does not contain an $i+1$ and the 
cell $(r, c+2)$ does not contain an $i$. Hence, $f_{i}$ acts on 
$\mathcal{V}_{b}(T)$ by replacing the $i$ in $\sfL_{\mathcal{V}_{b}(T)}(r, 
c+1)$ 
with an $i+1$. We now consider $\mathcal{V}_{b}(f_{i}(T))$. We have that 
$f_{i}$ acts on $T$ by changing the $i$ in $\sfL_{T}(r, c)$ to an $i+1$. We see 
that $a'$ and $k'$ are the same as $a$ and $k$ respectively. Since $i>a$, we 
have that $i+1>a$ or in other words $i+1 \in (a', \ell']\cap \sfL_{T}(r, c)$. 
This 
implies that $(a', \ell']\cap \sfL_{f_{i}(T)}(r', c')= (((a', \ell']\cap 
\sfL_{T}(r, 
c)) 
\cup \{ i+1 \}) - \{ i \}$. We have $\mathcal{V}_{b}$ acts on $f_{i}(T)$ by 
removing $(a', \ell']\cap \sfL_{f_{i}(T)}(r, c)$ from $\sfL_{f_{i}(T)}(r, c)$ 
and 
$a$ 
from $\sfA_{f_{i}(T)}(r, c)$, replacing $k$ with $(a', \ell']\cap 
\sfL_{f_{i}(T)}(r, c)$ 
in $(r, c+1)$, and appending $k$ to $\sfA_{f_{i}(T)}(r, c+1)$. We see that 
$f_i(\mathcal{V}_{b}(T)) = \mathcal{V}_{b}(f_i(T))$.\\

Assume that the rightmost unpaired $i$ is in $\sfA_{T}(r,c)$ and $\tilde{r} < 
r$ 
or $(a, \ell]\cap \sfL_{T}(r, c) = \emptyset$ for this entire paragraph. Under 
this 
assumption, $f_{i}$ acts on $T$ by changing the rightmost $i$ in the arm of 
$(r, c)$ to an $i+1$. Also, $\mathcal{V}_{b}$ acts on $T$ by removing $a$ from 
$\sfA_{T}(r, c)$, replacing $k$ in $(\tilde{r}, c+1)$ with $a$, and appending 
$k$ 
to $\sfA_{T}(\tilde{r}, c+1)$. First, we make the additional assumption that 
$i< 
a$. Since we assume the rightmost unpaired $i$ is in the arm of $(r, c)$ and $i 
< a$, we have the rightmost unpaired $i$ in $\mathcal{V}_{b}(T)$ is in the same 
position as in $T$. Note that the cell $(r+1, c)$ still does not contain an 
$i+1$ and the cell $(r, c+1)$ still does not contain an $i$. Thus, we have that 
$f_i$ acts on $\mathcal{V}_{b}(T)$ by changing the rightmost $i$ in 
$\sfA_{\mathcal{V}_{b}}(r, c)$ into an $i+1$. We now consider 
$\mathcal{V}_{b}(f_{i}(T))$. We see that $a'$ and $k'$ are the same as $a$ and 
$k$ respectively. This implies that $\mathcal{V}_{b}$ acts on $f_{i}(T)$ by 
removing $a$ from $(r,c)$, replacing $k$ with $a$ in $(\tilde{r}, c)$, and 
appending $k$ to $\sfA_{f_{i}(T)}(\tilde{r}, c+1)$. We see that 
$f_i(\mathcal{V}_{b}(T)) = \mathcal{V}_{b}(f_i(T))$. Next, we make the 
assumption that $a = i$ and column $c+1$ does not contain both an $i$ and an 
$i+1$. We have that the rightmost unpaired $i$ in $\mathcal{V}_{b}(T)$ is 
precisely the $i$ that replaced $k$ in $(\tilde{r}, c+1)$. We also have that $k 
\geqslant i+1$ or $k$ is the empty character implying that the cell $(\tilde{r}+1, 
c+1)$ does not contain an $i+1$ and the cell $(\tilde{r}, c+2)$ does not 
contain an $i$. This implies that $f_{i}$ acts on $\mathcal{V}_{b}(T)$ by 
changing the $i$ in $\sfL^{+}_{\mathcal{V}_{b}(T)}(\tilde{r}, c+1)$ to an $i+1$. We 
now consider $\mathcal{V}_{b}(f_{i}(T))$. We see that $a'$ is the $i+1$ in $(r, 
c)$ created by appying $f_{i}$ and $k'$ is the same as $k$. Thus, 
$\mathcal{V}_{b}$ acts on $f_{i}(T)$ by removing the $i+1$ from $(r, c)$, 
replacing $k$ with an $i+1$ in $(\tilde{r}, c)$, and appending $k$ to 
$\sfA_{f_{i}(T)}(\tilde{r}, c+1)$. We see that $f_i(\mathcal{V}_{b}(T)) = 
\mathcal{V}_{b}(f_i(T))$. Next, we assume that $a = i$ and column $c+1$ 
contains both an $i$ and an $i+1$ in the same cell. Note that this implies that 
$k = i$. Since $a$ corresponded to the rightmost unpaired $i$ in $T$ and the 
only cell in column $c+1$ that contains an $i+1$ or an $i$ is $(\tilde{r}, 
c+1)$, we have that the rightmost unpaired $i$ in $\mathcal{V}_{b}(T)$ 
corresponds to the $i$ appended to $\sfA_{T}(\tilde{r}, c+1)$. Since 
$(\tilde{r}, 
c+1)$ contains an $i+1$ in $\mathcal{V}_{b}(T)$, we have that $(\tilde{r}+1, c+1)$ 
cannot contain an $i+1$ and $(\tilde{r}, c+2)$ cannot contain an $i$. Thus, 
$f_{i}$ acts on $\mathcal{V}_{b}(T)$ by changing the $i$ in 
$\sfA_{\mathcal{V}_{b}(T)}(\tilde{r}, c+1)$ to an $i+1$. We now consider 
$\mathcal{V}_{b}(f_{i}(T))$. We see that $a'$ is the $i+1$ in $(r, c)$ obtained 
after applying $f_{i}$ and $k'$ is the $i+1$ in cell $(\tilde{r}, c+1)$. This 
implies that $\mathcal{V}_{b}$ acts on $f_{i}(T)$ by removing the $i+1$ from 
$(r, c)$, replacing $k'$ with an $i+1$ in $(\tilde{r}, c+1)$, and appending 
$k'$ to $\sfA_{f_{i}(T)}(\tilde{r}, c+1)$. We see that $f_i(\mathcal{V}_{b}(T)) 
= 
\mathcal{V}_{b}(f_i(T))$. Finally, we make the assumption that $a = i$ and 
column $c+1$ contains both an $i$ and an $i+1$ but in different cells. We once 
again have that $k = i$, but now we have that $(\tilde{r}+1, c+1)$ contains an 
$i+1$. We have that the rightmost unpaired $i$ in $\mathcal{V}_{b}(T)$ is the 
$i$ that was appended to $\sfA_{T}(\tilde{r}, c+1)$. Since $(\tilde{r}+1, c+1)$ 
contains an $i+1$, we have that $f_{i}$ acts on $\mathcal{V}_{b}(T)$ by 
removing the $i$ from $\sfA_{\mathcal{V}_{b}(T)}(\tilde{r}, c+1)$ and appending 
an 
$i+1$ to $\sfA_{\mathcal{V}_{b}(T)}(\tilde{r}+1, c+1)$. We now consider 
$\mathcal{V}_{b}(f_{i}(T))$. We see that $a'$ is the $i+1$ in $(r, c)$ obtained 
after applying $f_{i}$ and $k'$ the $i+1$ in cell $(\tilde{r}+1, c+1)$. This 
implies that $\mathcal{V}_{b}$ acts on $f_{i}(T)$ by removing the $i+1$ from 
$(r, c)$, replacing $k'$ with an $i+1$ in $(\tilde{r}+1, c+1)$, and appending 
$k'$ to $\sfA_{f_{i}(T)}(\tilde{r}+1, c+1)$. We see that 
$f_i(\mathcal{V}_{b}(T)) 
= \mathcal{V}_{b}(f_i(T))$.\\

Assume that the rightmost unpaired $i$ is in the arm of $(r,c)$, $\tilde{r} = 
r$, and $(a, \ell]\cap \sfL_{T}(r, c) \not= \emptyset$ for this entire 
paragraph. 
First, we make the additional assumption that $i < a$. This gives us that 
$\mathcal{V}_{b}(T)$ is attained from $T$ by removing $(a, \ell]\cap 
\sfL_{T}(r, 
c)$ 
from $\sfL_{T}(r, c)$ and $a$ from $\sfA_{T}(r, c)$, replacing $k$ in cell $(r, 
c+1)$ with $((a, \ell]\cap \sfL_{T}(r, c)) \cup \{ a \}$, and appending $k$ to 
$\sfA_{T}(r, c+1)$. Since $k, a >i$, we have that the rightmost unpaired $i$ in 
$\mathcal{V}_{b}(T)$ remains the same as in $T$. We also have that the cell 
$(r+1, c)$ does not contain an $i+1$ and the cell $(r, c+1)$ does not contain 
an $i$. Thus, $f_i$ acts on $\mathcal{V}_{b}(T)$ by changing the rightmost $i$ 
in $\sfA_{\mathcal{V}_{b}(T)}(r, c)$ to an $i+1$. We now consider 
$\mathcal{V}_{b}(f_i(T))$. We have that $f_i$ acts on $T$ by changing the 
rightmost $i$ in $\sfA_{T}(r, c)$ to an $i+1$. We see that $a'$, $k'$, and 
$(a', 
l']\cap \sfL_{f_{i}(T)}(r', c')$ are the same as $a$, $k$, and $(a, \ell]\cap 
\sfL_{T}(r, c)$ respectively. This implies that $\mathcal{V}_{b}$ acts on 
$f_i(T)$ by removing $(a, \ell]\cap \sfL_{T}(r, c)$ from $\sfL_{f_{i}(T)}(r, 
c)$ 
and 
$a$ from $\sfA_{f_{i}(T)}(r, c)$, replacing $k$ in cell $(r, c+1)$ with $((a, 
l]\cap \sfL_{T}(r, c))\cup \{ a \}$, and appending $k$ to $\sfA_{f_{i}(T)}(r, 
c+1)$. 
We see that $f_i(\mathcal{V}_{b}(T)) = \mathcal{V}_{b}(f_i(T))$. Next, we 
assume that $a = i$ and $(r, c)$ contains an $i+1$. Since $a = i$, the $i+1$ in 
$(r, c)$ must be in its leg. Also as $a$ is the rightmost unpaired $i$ of $T$, 
we must have that $(r, c)$ contains another $i$ besides $a$. This gives us that 
$\mathcal{V}_{b}(T) $ is attained from $T$ by removing $(a, \ell]\cap 
\sfL_{T}(r, 
c)$ from $\sfL_{T}(r, c)$ and $a$ from $\sfA_{T}(r, c)$, replacing $k$ in cell 
$(r, 
c+1)$ with $((a, \ell]\cap \sfL_{T}(r, c)) \cup \{ a \}$, and appending $k$ to 
$\sfA_{T}(r, c+1)$. Note that the $i$ inserted into $(r, c+1)$ becomes 
$i$-paired 
while an $i$ in $(r, c)$ becomes unpaired. This implies that the rightmost 
unpaired $i$ in $\mathcal{V}_{b}(T)$ still sits in the cell $(r, c)$. We see 
that the cell $(r+1, c)$ still does not contain an $i+1$; however, the cell 
$(r, c+1)$ now contains an $i$. This implies that $f_i$ acts on 
$\mathcal{V}_{b}(T)$ by removing the $i$ from the cell $(r, c+1)$ and appending 
an $i+1$ to $\sfL_{\mathcal{V}_{b}(T)}(r, c)$. We now consider 
$\mathcal{V}_{b}(f_i(T))$. We have that $f_i$ acts on $T$ by changing $a$ into 
an $i+1$. We have that $a'$ is the $i+1$ obtained from applying $f_i$ and $k'$ 
is the same as $k$. We see that $(a', \ell']\cap \sfL_{f_{i}(T)}(r', c')$ is the 
same as $(a, \ell]\cap \sfL_{T}(r, c)$ excluding the $i+1$. We have that 
$\mathcal{V}_{b}$ 
acts on $f_i(T)$ by removing $(a', \ell']\cap \sfL_{f_{i}(T)}(r', c')$ from 
$\sfL_{f_{i}(T)}(r, c)$ and $i+1$ from $\sfA_{f_{i}(T)}(r, c)$, leaving the 
$i+1$ in 
$\sfL_{f_{i}(T)}(r, c)$, replacing $k$ in $(r, c+1)$ with $((a', \ell']\cap 
\sfL_{f_{i}(T)}(r', c')) \cup \{a' \} $, and appending $k$ to 
$\sfA_{f_{i}(T)}(r, c+1)$. We see that $f_i(\mathcal{V}_{b}(T)) = 
\mathcal{V}_{b}(f_i(T))$. 
Finally, we assume that $a = i$ and $i+1$ is not in the cell $(r, c)$. This 
gives us that $\mathcal{V}_{b}(T)$ is attained from $T$ by removing $(a, 
\ell]\cap 
\sfL_{T}(r, c)$ from $\sfL_{T}(r, c)$ and $a$ from $\sfA_{T}(r, c)$, replacing 
$k$ in 
cell $(r, c+1)$ with $((a, \ell]\cap \sfL_{T}(r, c)) \cup \{ a\}$, and 
appending 
$k$ 
to $\sfA_{T}(r, c+1)$. Since $k \geqslant j > i+1$ for all $j \in (a, \ell]\cap \sfL_{T}(r,c)$, we have that the 
$i$ inserted into 
the cell $(r, c+1)$ is the rightmost unpaired $i$ in $\mathcal{V}_{b}(T)$. Note 
that the cell $(r+1, c+1)$ does not contain an $i+1$ and the cell $(r, c+2)$ 
does not contain an $i$. Thus, $f_i$ acts on $\mathcal{V}_{b}(T)$ by changing 
the $i$ in $(r, c+1)$ to an $i+1$. We now consider $\mathcal{V}_{b}(f_i(T))$. 
We have that $f_i$ acts on $T$ by changing $a$ into an $i+1$. We have that $a'$ 
is the $i+1$ obtained from applying $f_i$ and $k'$ is the same as $k$. We see 
that $(a', \ell']\cap \sfL_{f_{i}(T)}(r', c') = (a, \ell]\cap \sfL_{T}(r, c)$. 
We 
have 
that $\mathcal{V}_{b}$ acts on $f_i(T)$ by removing $(a, \ell]\cap \sfL_{T}(r, 
c)$ 
from $\sfL_{f_{i}(T)}(r, c)$ and $i+1$ from $\sfA_{f_{i}(T)}(r, c)$, replacing 
$k$ in 
$(r, c+1)$ with $((a, \ell]\cap \sfL_{T}(r, c)) \cup \{ a' \}$, and appending 
$k$ 
to 
$\sfA_{f_{i}(T)}(r, c+1)$. We see that $f_i(\mathcal{V}_{b}(T)) = 
\mathcal{V}_{b}(f_i(T))$.
		\end{itemize}

	\item[Case 2] Assume that $\hat{r} < r$ and $\hat{c} = c$.\\
	Note that $a > i$. By Lemma~\ref{lem: knuth} we have that
	$R_{i}(T) = R_{i}(\mathcal{V}_{b}(T))$ unless $a = i+1$ and column $c+1$ contains both an $i$ and an 
	$i+1$. However, even in this special case, we see that the rightmost unpaired $i$ of $\mathcal{V}_{b}(T)$ is 
	in the same position as the rightmost unpaired $i$ of $T$. We also see that 
	$\mathcal{V}_{b}(T)$ does not change whether or not cell $(\hat{r}+1, c)$ 
	contains an $i+1$ and whether or not cell $(\hat{r}, c+1)$ contains an $i$.	Thus, $f_{i}$ acts on the same $i$ and in 
	the same way for both $T$ and 
	$\mathcal{V}_{b}(T)$. Since $a> i$, we have that $k'$ is the same as $k$. 
	Note that the only way for $f_{i}$ to affect the cell $(r, c)$ in $T$ is if 
	$\hat{r} = r-1$ and $(r, c)$ contains an $i+1$. However, even in this 
	special case, we see that $(r', c')$, $a'$, $l'$, and $(a', \ell'] \cap 
	\sfL_{f_{i}(T)}(r', c')$ are the same as $(r, c)$, $a$, $\ell$, and $(a, 
	\ell] 
	\cap \sfL_{T}(r, c)$. Thus, $\mathcal{V}_{b}$ acts on $T$ and $f_{i}(T)$ in 
	the same way. Therefore, we have that the actions of $f_{i}$ and 
	$\mathcal{V}_{b}$ on $T$ are independent and $f_i(\mathcal{V}_{b}(T)) = 
	\mathcal{V}_{b}(f_i(T))$.

	\item[Case 3] Assume that $\hat{c} < c$. \\
	Let $\tilde{i}$ denote the rightmost unpaired $i$ of $T$. From the proof of Lemma~\ref{lem: knuth}, we have that $\mathcal{V}_{b}$ 
	does not change whether or not the $i$'s to the right of $\tilde{i}$ in $R_{i}(T)$ are $i$-paired. Thus, the rightmost unpaired $i$ in 
	$R_{i}(T)$ and $R_{i}(\mathcal{V}_b(T))$ are in the same position. As $\mathcal{V}_{b}$ does not affect any column to the left of 
	column $c$, we have that the rightmost unpaired $i$ for $\mathcal{V}_{b}(T)$ is
	in the same position as the rightmost unpaired $i$ for $T$. Note that $\mathcal{V}_{b}$ also does not affect whether or not cell 
	$(\hat{r}+1, \hat{c})$ contains an $i+1$ and whether or not cell $(\hat{r}, \hat{c}+1)$ contains an $i$. Thus, $f_{i}$ acts on 
	the rightmost unpaired $i$ in $T$ and $\mathcal{V}_{b}(T)$ in exactly the 
	same way. Next, we note that $(r', c')$, $a'$, $k'$, and $(a', \ell'] \cap 
	\sfL_{f_{i}(T)}(r', c')$ are the same as $(r, c)$, $a$, $k$, and $(a, \ell] \cap 
	\sfL_{T}(r, c)$ respectively. Thus, $\mathcal{V}_{b}$ acts on $T$ and 
	$f_{i}(T)$ in the same way. Therefore, we have that the actions of $f_{i}$ 
	and $\mathcal{V}_{b}$ on $T$ are independent and $f_i(\mathcal{V}_{b}(T)) = 
	\mathcal{V}_{b}(f_i(T))$.

	\item[Case 4] Assume that $\hat{r} \leqslant r$ and $\hat{c} = c+1$.\\
Under this assumption, we have that column $c+1$ does not contain an $i+1$ and $a \not = i+1$ since the
cells in column $c+1$ do not contain any arms. We break into subcases.
		\begin{itemize}
			\item Assume that $k \not = i$. This implies that the rightmost 
			unpaired $i$ in $\mathcal{V}_{b}(T)$ is in the same position as the 
			rightmost unpaired $i$ in $T$. We see that $\mathcal{V}_{b}$ does 
			not change whether or not cell $(\hat{r}+1, c+1)$ contains an $i+1$ 
			and whether or not cell $(\hat{r}, c+2)$ contains an $i$. Thus, 
			$f_{i}$ acts on the rightmost unpaired $i$ in $T$ and 
			$\mathcal{V}_{b}(T)$ in exactly the same way. We also observe that 
			$(r', c')$, $a'$, $\ell'$, $k'$, and $(a', \ell'] \cap 
			\sfL_{f_{i}(T)}(r', c')$ are the same as $a$, $\ell$, $k$, and $(a, \ell] \cap 
			\sfL_{f_{i}(T)}(r, c)$ respectively. Thus, $\mathcal{V}_{b}$ acts on 
			$T$ and $f_{i}(T)$ in the same way. Therefore, we have that the 
			actions of $f_{i}$ and $\mathcal{V}_{b}$ on $T$ are independent and 
			$f_i(\mathcal{V}_{b}(T)) = \mathcal{V}_{b}(f_i(T))$.

			\item Assume that $k = i$. We see that the rightmost unpaired $i$ in $\mathcal{V}_{b}(T)$ 
			is the $i$ that was appended to $\sfA_{T}(\hat{r}, c+1)$. Note that 
			$\mathcal{V}_{b}$ does not change whether or not cell $(\hat{r}+1, 
			c+1)$ contains an $i+1$ and whether or not cell $(\hat{r}, c+2)$ 
			contains an $i$. We first make the extra assumption that $(\hat{r}, c+2)$ in $T$ contains an $i$. 
                       This implies that $f_{i}$ acts on $\mathcal{V}_{b}(T)$ and $T$ in the same way by removing
			the $i$ from the hook entry of $(\hat{r}, c+2)$ and appending an $i+1$ to the leg of $(\hat{r}, c+1)$.
			We also have that $(r', c')$, $a'$, $\ell'$, $k'$, and $(a', \ell'] \cap \sfL_{f_{i}(T)}(r', c')$ are 
			equal to $(r, c)$, $a$, $\ell$, $k$, and $(a, \ell] \cap \sfL_{f_{i}(T)}(r, c)$ 
			respectively. Thus, $\mathcal{V}_{b}$ acts on $T$ and $f_{i}(T)$ in 
			the same way. Therefore, we have that the actions of $f_{i}$ and 
			$\mathcal{V}_{b}$ on $T$ are independent and 
			$f_i(\mathcal{V}_{b}(T)) = \mathcal{V}_{b}(f_i(T))$.
                        We now assume that $(\hat{r}, c+2)$ does not contain an $i$.
                        This implies that $f_{i}$ acts on $\mathcal{V}_{b}(T)$ by changing the $i$ in 
			$\sfA_{\mathcal{V}_{b}(T)}(\hat{r}, c+1)$ to an $i+1$ and 
			acts on $T$ similarly by changing the $i$ in 
			$\sfL_{\mathcal{V}_{b}(T)}(\hat{r}, c+1)$ to an $i+1$. Note that 
			$(r', c')$, $a'$, $\ell'$, and $(a', \ell'] \cap \sfL_{f_{i}(T)}(r', c')$ 
			are equal to $(r, c)$, $a$, $\ell$, and $(a, \ell] \cap 
			\sfL_{f_{i}(T)}(r, c)$ respectively while $k'$ is the $i+1$ in 
			$\sfL_{f_{i}(T)}(\hat{r}, c+1)$. Thus, besides the value of the number that is 
                        bumped from the leg of $(\hat{r}, c+1)$ to its arm, we have $\mathcal{V}_{b}$ 
			acts on $T$ and $f_{i}(T)$ in the same way. Looking at 
			$f_i(\mathcal{V}_{b}(T))$ and $\mathcal{V}_{b}(f_i(T))$, we see 
			that $f_i(\mathcal{V}_{b}(T)) = \mathcal{V}_{b}(f_i(T))$.
		\end{itemize}

	\item[Case 5] Assume that $\hat{r} >r$ and $\hat{c} = c$ or $c+1$.\\
	Under this assumption, we have that $\mathcal{V}_{b}$ does not change the cells $(\hat{r}, \hat{c})$,  $(\hat{r}+1, \hat{c})$, 
	and $(\hat{r}, \hat{c}+1)$. We also have that $R_{i}(T) = R_{i}(\mathcal{V}_{b}(T))$ implying that the rightmost unpaired $i$
	in $\mathcal{V}_{b}(T)$ is in the same position as the rightmost unpaired $i$ in $T$. Thus, $f_{i}$ acts on the rightmost 
	unpaired $i$ in $\mathcal{V}_{b}(T)$ and $T$ in the same way. Note that $i+1$ cannot be in column $\hat{c}$ implying 
	that $f_{i}$ can only make changes to the legs and hook entries of $(\hat{r}, \hat{c})$ and $(\hat{r}, \hat{c}+1)$. Since these 
	changes only affect the legs and hook entries of cells outside of the possible cells that $\mathcal{V}_{b}$ can change, we 
	have that $\mathcal{V}_{b}$ acts on $T$ and $f_{i}(T)$ in the same way. Therefore, we have that the actions of $f_{i}$ 
	and $\mathcal{V}_{b}$ on $T$ are independent and $f_i(\mathcal{V}_{b}(T)) = \mathcal{V}_{b}(f_i(T))$.

	\item[Case 6] Assume that $\hat{c} \geqslant c+2$.\\
	Let $\tilde{i}$ denote the rightmost unpaired $i$ of $T$. From the proof of Lemma~\ref{lem: knuth}, we have that $\mathcal{V}_{b}$ 
	does not change whether or not the $i+1$'s to the left of $\tilde{i}$ are $i$-paired. Thus, the rightmost unpaired $i$ in $R_{i}(T)$ and 
	$R_{i}(\mathcal{V}_b(T))$ are in the same position. As $\mathcal{V}_{b}$ does not affect any column to the right of column $c+1$, 
	we have that the rightmost unpaired $i$ for $\mathcal{V}_{b}(T)$ is 
	in the same position as the rightmost unpaired $i$ for $T$. Note that $\mathcal{V}_{b}$ also does not affect whether or not cell 
	$(\hat{r}+1, \hat{c})$ contains an $i+1$ and whether or not cell $(\hat{r}, \hat{c}+1)$ contains an $i$. Since the cells that $f_{i}$ 
	and $\mathcal{V}_{b}$ could change are different and the rightmost unpaired $i$ 
	does not change, we have that the actions of $f_{i}$ and $\mathcal{V}_{b}$ on $T$ are independent and 
	$f_i(\mathcal{V}_{b}(T)) = \mathcal{V}_{b}(f_i(T))$.
\end{description}

\end{proof}

\begin{theorem}
\label{theorem.main}
    Let $T \in \HVT$. 
    \begin{enumerate}
    \item
    If $f_i(T) \neq 0$, we have $f_i(P(T)) = P(f_i(T))$ and $Q(T) = Q(f_i(T))$.
    \item
    If $e_i(T) \neq 0$, we have $e_i(P(T)) = P(e_i(T))$ and $Q(T) = Q(e_i(T))$.
    \end{enumerate}
\end{theorem}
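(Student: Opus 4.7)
\emph{Proof plan.} The plan is to bootstrap Lemma~\ref{intertwine_lemma} from single bumping steps $\mathcal{V}_{b}$ up to $\mathcal{V}$, and from there to the $\alpha$-fold iteration that computes $P$. A key preliminary observation is that the crystal operators $f_i$ and $e_i$ of Definition~\ref{def: hvt_crystal} preserve both the shape of the underlying Young diagram and the arm excess of each individual column: rule~(M) transfers an arm entry between two cells in the same column, rule~(S) modifies only legs and hook entries, and rule~(N) merely changes an entry's value in place. In particular, the total arm excess $\alpha$ of $T$ equals that of $f_i(T)$, so the same number of $\mathcal{V}$-steps is required to produce $P(T)$ and $P(f_i(T))$.

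Given this, I would first show by induction on $k\geqslant 0$ that $f_i(\mathcal{V}_{b}^{k}(T)) = \mathcal{V}_{b}^{k}(f_i(T))$ with both sides nonzero. The base case is the hypothesis $f_i(T)\neq 0$, and the inductive step is Lemma~\ref{intertwine_lemma} applied to $\mathcal{V}_{b}^{k-1}(T)$, whose non-annihilation under $f_i$ is supplied by the previous step. Because $\mathcal{V} = \mathcal{V}_{b}^{d}$ for the minimal $d$ witnessing either a shape change or the fixed point $\mathcal{V}_{b}^{d}=\mathcal{V}_{b}^{d-1}$, and the shape of $\mathcal{V}_{b}^{k}(T)$ agrees with that of $\mathcal{V}_{b}^{k}(f_i(T))$ for every $k$, the same stopping time serves both inputs. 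Therefore $f_i\circ\mathcal{V}=\mathcal{V}\circ f_i$ whenever $f_i$ does not annihilate, and iterating $\alpha$ times gives $f_i(P(T)) = P(f_i(T))$.

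For $Q(T) = Q(f_i(T))$, the column-wise arm-excess invariance ensures that at each step the rightmost column $c$ of $P_{i-1}$ with nonzero arm excess agrees between the two uncrowding sequences, while the commutation $f_i\circ\mathcal{V}=\mathcal{V}\circ f_i$ just established ensures that the column index $\tilde{c}$ of the newly added cell also agrees. Thus the recording entries $\tilde{c}-c$ coincide step by step, yielding $Q(T)=Q(f_i(T))$. Part~(2) then follows by the standard partial-inverse argument: setting $T' = e_i(T)$, one has $f_i(T')=T$, so part~(1) applied to $T'$ gives $f_i(P(T'))=P(T)$ and $Q(T')=Q(T)$, whence $e_i(P(T))=P(e_i(T))$ and $Q(e_i(T))=Q(T)$.

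The main obstacle I anticipate is the verification that the stopping time $d$ for $\mathcal{V}$ truly coincides for $T$ and $f_i(T)$: one must check not only the shape-change trigger but also the alternative fixed-point halting condition $\mathcal{V}_{b}^{d}(T)=\mathcal{V}_{b}^{d-1}(T)$, ensuring it fires at the same iteration for both inputs. This reduces to the column-wise arm-excess invariance combined with commutation of $\mathcal{V}_{b}$ with $f_i$ at each intermediate step, so the difficulty is bookkeeping rather than conceptual.
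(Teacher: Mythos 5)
Your proposal is correct and follows essentially the same route as the paper's proof: iterate Lemma~\ref{intertwine_lemma} through the $\mathcal{V}_b$-steps to get $f_i(P(T))=P(f_i(T))$, use that crystal operators preserve the shape and do not move arm entries across columns to match the recording entries $\tilde{c}-c$ step by step, and deduce part (2) from part (1) via the partial-inverse property of $e_i$ and $f_i$. Your explicit check that the stopping time $d$ in Definition~\ref{def: uncrowd_insert} agrees for $T$ and $f_i(T)$ is a detail the paper leaves implicit, but it is not a different argument.
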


\begin{proof}
Part (2) follows from part (1) since $e_i$ and $f_i$ are partial inverse. We prove part (1) here.

Let $T \in \HVT$ with arm excess $\alpha$ such that $f_i(T)\neq 0$ for some 
$i$. Then $f_i(P(T))=P(f_i(T))$ follows from Lemma~\ref{intertwine_lemma}, as 
$P(T)$ is obtained by successive applications of $\mathcal{V}$ on $T$ and each 
application of $\mathcal{V}$ is a string of applications of $\mathcal{V}_b$.

Since crystal operators do not change arm excess, we may employ the notation 
in Definition~\ref{def: uncrowd_map} and denote the pair of insertion and recording tableaux produced 
at the $j$-th step for $0\leqslant j\leqslant \alpha$ of the uncrowding map $\mathcal{U}$ for 
$T$ and $f_i(T)$ as $(P_j(T),Q_j(T))$ and $(P_j(f_i(T)),Q_j(f_i(T)))$, respectively. 
As crystal operators do not change the shape of $T$, we have 
$\shape(P_j(f_iT)) = \shape(f_i(P_j(T))) = \shape(P_j(T))$ for all $0 \leqslant j \leqslant \alpha$. 
Hence
\begin{equation}
\label{equation.PP}
	\shape(P_{j+1}(T))/\shape(P_{j}(T)) = \shape(P_{j+1}(f_i(T)))/\shape(P_{j}(f_i(T))) \qquad \text{for all $0 \leqslant j\leqslant \alpha -1$.}
\end{equation}

Next we show $Q_j(T) = Q_j(f_i(T))$ for all $0\leqslant j\leqslant \alpha$ by induction. When $j=0$, $Q_0(T)=Q_0(f_i(T))$ since 
$\shape(P_0(T)) = \shape(P_0(f_i(T))) = \shape(T)$. 

Suppose $Q_j(T) = Q_j(f_i(T))$ for a given $j\geqslant 0$. It suffices to show that the cells 
\[
\begin{split}
	\shape(Q_{j+1}(T))/\shape(Q_j(T)) &= \shape(P_{j+1}(T))/\shape(P_{j}(T)) \qquad \text{and}\\
	\shape(Q_{j+1}(f_i(T)))/\shape(Q_j(f_i(T))) &= \shape(P_{j+1}(f_i(T)))/\shape(P_{j}(f_i(T)))
\end{split}
\]
 in $Q_{j+1}(T)$ and $Q_{j+1}(f_i(T))$ are at the same position with the same 
entry. By~\eqref{equation.PP}, the cells are in the same position, say in column $\tilde{c}$. By 
Definition~\ref{def: hvt_crystal}, $f_i$ does not move elements in 
the arm to a different column, so the columns in which we start the uncrowding 
insertion $\mathcal{V}$ on $P_j(T)$ and $P_j(f_i(T))$ are the same, say $c$, by 
Definition~\ref{def: uncrowd_map}. Hence the cells $\shape(Q_{j+1}(T))/\shape(Q_j(T))$ and 
$\shape(Q_{j+1}(f_i(T)))/\shape(Q_j(f_i(T)))$ are at the same position with 
entry $\tilde{c}-c$. The theorem follows.
\end{proof}

Hawkes and Scrimshaw~\cite[Theorem 4.6]{HS.2019} proved that $\HVT^m(\lambda)$ is a Stembridge crystal by 
checking the Stembridge axioms. This also follows directly from our analysis above.

\begin{corollary}
The crystal $\HVT^m(\lambda)$ of Definition~\ref{def: hvt_crystal} is a Stembridge crystal of type $A_{m-1}$.
\end{corollary}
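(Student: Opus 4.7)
The plan is to transfer the Stembridge axioms from set-valued tableaux to hook-valued tableaux through the uncrowding bijection $\mathcal{U}$ of Definition~\ref{def: uncrowd_map}, using the fact that $\SVT^m(\mu)$ is already known to be a Stembridge crystal of type $A_{m-1}$ (either directly from \cite{MPS.2018}, or by applying the uncrowding map $\mathcal{U}_{\SVT}$ of Definition~\ref{definition.uncrowding SVT} to reduce to the classical Stembridge crystal on $\SSYT^m$).

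First I would verify that $\mathcal{U}$ is a morphism of crystals. Inspection of Definition~\ref{def: uncrowd_bump} shows that each bumping step merely relocates existing entries of $T$ (case (a) removes $a$ from one cell and moves $k$ within column $c+1$; case (b) migrates $\{a\}\cup\bigl((a,\ell]\cap \sfL_T(r,c)\bigr)$ into column $c+1$ with $k$ attached), creating and destroying no letters. Hence the multiset of entries of $P(T)$ agrees with that of $T$, so $\mathcal{U}$ is weight-preserving. Combined with Theorem~\ref{theorem.main} and Proposition~\ref{proposition.main}, this yields that $\mathcal{U}$ commutes with all $e_i$ and $f_i$ and (since $\varphi_i-\varepsilon_i$ is determined by the weight) preserves $\varphi_i$ and $\varepsilon_i$ as well.

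Next, fix a connected component $C$ of $\HVT^m(\lambda)$. By Theorem~\ref{theorem.main} the recording tableau $Q(T)$ is constant on $C$, so $\mathcal{U}(C)\subseteq \SVT^m(\mu)\times\{Q_0\}$ for some $\mu$ and some $Q_0\in\hat{\mathcal{F}}(\mu/\lambda)$. Since $\mathcal{U}$ is injective and intertwines with all crystal operators, its image projects to a subset $C'$ of $\SVT^m(\mu)$ that is closed under every $e_i$ and $f_i$, i.e.\ a union of connected components of $\SVT^m(\mu)$; moreover $\mathcal{U}|_C\colon C \to C'$ is an isomorphism of abstract crystals.

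Finally, the Stembridge axioms are local: they concern only configurations reachable by a bounded number of applications of the crystal operators starting from a given vertex. Consequently, any subcrystal of a Stembridge crystal that is closed under all $e_i$ and $f_i$ is itself Stembridge. Applying this to $C' \subseteq \SVT^m(\mu)$ and transporting through the isomorphism $\mathcal{U}|_C$, the component $C$ satisfies the Stembridge axioms. As this holds for every connected component, $\HVT^m(\lambda)$ is a Stembridge crystal of type $A_{m-1}$. The hard work is already encapsulated in Theorem~\ref{theorem.main}; the only point still to check is the weight preservation of $\mathcal{U}$, which is immediate from Definition~\ref{def: uncrowd_bump} and thus does not constitute a genuine obstacle.
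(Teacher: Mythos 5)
Your proposal is correct and follows essentially the same route as the paper: the paper also deduces the result from the fact that $\SVT^m(\mu)$ is a Stembridge crystal (\cite{MPS.2018}) together with Proposition~\ref{proposition.main} and Theorem~\ref{theorem.main}, which make $\mathcal{U}$ a strict (weight-preserving, injective) crystal morphism into $\bigsqcup_{\mu\supseteq\lambda}\SVT^m(\mu)\times\hat{\mathcal{F}}(\mu/\lambda)$. Your write-up merely unpacks the paper's terse ``strict crystal morphism, the statement follows'' step (component-wise image closed under the operators, locality of the Stembridge axioms, transport along the isomorphism), with injectivity supplied by Corollary~\ref{corollary.main}.
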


\begin{proof}
According to~\cite{MPS.2018}, $\SVT^m(\mu)$ is a Stembridge crystal of type $A_{m-1}$. By Theorem~\ref{theorem.main}, the map
\[
	\mathcal{U} \colon \HVT^m(\lambda) \to \bigsqcup_{\mu \supseteq \lambda} \SVT^m(\mu) \times \hat{\mathcal{F}}(\mu/\lambda),
\]
is a strict crystal morphism (see for example \cite[Chapter~2]{BumpSchilling.2017}). The statement follows.
\end{proof}

\subsection{Uncrowding map on multiset-valued tableaux}
\label{section.uncrowding MVT}

The uncrowding map on hook-valued tableaux described above turns out to be a 
generalization of the uncrowding map on multiset-valued tableaux by Hawkes and 
Scrimshaw \cite[Section 3.2]{HS.2019}.
We will prove that this is indeed the case in this section.
Let us recall the definition of the uncrowding map in \cite[Section 3.2]{HS.2019}.

\begin{definition}\label{uncrowd: Hawkes-Scrimshaw}
Let $T \in\MVT(\lambda)$. The \defn{uncrowding map} 
\[
	\Upsilon: \MVT(\lambda) \to \bigsqcup_{\mu\supseteq \lambda} \SSYT(\mu)\times \hat{\mathcal{F}}(\mu/\lambda)
\]
sends $T$ to a pair of tableaux using the following algorithm:
	\begin{enumerate}
		\item Set $U_{\lambda_1+1}=\emptyset$ and $F_{\lambda_1+1}$ be the 
		unique column-flagged increasing tableau of shape $\emptyset/\emptyset$.
		\item Let $1\leqslant k\leqslant \lambda_1$ and assume that the pair $(U_{k+1},F_{k+1})$
		is defined. The pair $(U_k, F_k)$ is defined recursively from $(U_{k+1},F_{k+1})$ using the following two steps:
		\begin{enumerate}
		\item Define $U_k$ as the RSK row insertion tableau from the word 
		\[
			R(C_k) R(C_{k+1}) \cdots R(C_{\lambda_1}),
		\]
		where $C_j$ is the $j$-th column of $T$ for every $1\leqslant j\leqslant \lambda_1$.
		In other words, if we denote by $T_{\geqslant k}$ the tableau formed by the 
		columns weakly to the right of the $k$-th column of $T$, $U_k$ is 
		obtained by performing the RSK row insertion using the 
		column reading word of $T_{\geqslant k}$. 
		\item Form the tableau $F_k$ of shape $\shape(U_k)/\shape(T_{\geqslant k})$ as 
		follows. Shift $F_{k+1}$ by one column to the right and fill the boxes in the 
		same positions into $F_k$; for every unfilled box in the shape 
		$\shape(U_k)/\shape(U_{k+1})$, label each box in column $i$ with entry 
		$i-1$.
		\end{enumerate}
	\end{enumerate}
	Define $\Upsilon(T)=(U,F):=(U_1, F_1)$.
\end{definition}

\begin{example}
	Let $T$ be the multiset-valued tableau
	\[
	\ytableausetup{notabloids,boxsize=2em}
	T =\, 
	\raisebox{7mm}{\scriptsize{\begin{ytableau}
			\begin{array}{l} 45 \end{array}\\
			\begin{array}{l} 233 \end{array} & \begin{array}{l} 345 
			\end{array}\\
			1 & \begin{array}{l} 11 \end{array} & \begin{array}{l} 4 
			\end{array}
			\end{ytableau}}}\,.
	\]
	Then, we obtain the following pairs of tableaux for the
	uncrowding map $\Upsilon$:
	\[
	\begin{aligned}
	(U_4, F_4) &= \left(\emptyset, \emptyset\right)\\
	(U_3, F_3) &= \left(
	\ytableausetup{notabloids,boxsize=1.8em}
	\raisebox{-2mm}{\scriptsize{\begin{ytableau}
		4
		\end{ytableau}}} \,,\, 
	\raisebox{-2mm}{\scriptsize{\begin{ytableau}
		*(gray)
		\end{ytableau}}}
	\right)\\
	(U_2, F_2) &= \left(
	\raisebox{1mm}{\scriptsize{\begin{ytableau}
	3 & 5\\
	1 & 1 & 4 & 4
	\end{ytableau}}} \,,\, 
	\scriptsize{\begin{ytableau}
		*(gray) & 1\\
		*(gray) & *(gray) & 2 & 3
		\end{ytableau}}
	\right)\\
	(U_1, F_1) &= \left(
	\raisebox{1mm}{\scriptsize{\begin{ytableau}
			4 & 5\\
			2 & 3 & 3 & 5\\
			1 & 1 & 1 & 3 & 4 & 4
			\end{ytableau}}} \,,\, 
	\scriptsize{\begin{ytableau}
		*(gray) & 1\\
		*(gray) & *(gray) & 1 & 3\\
		*(gray) & *(gray) & *(gray) & 2 & 3 & 5
		\end{ytableau}}
	\right)& = (U,F) = \Upsilon(T).
	\end{aligned}\]
\end{example}

\begin{proposition}
	Let $T\in\MVT(\lambda)$. Then $\mathcal{U}(T) = \Upsilon(T)$.
	In other words, the uncrowding map as defined in Definition \ref{def: 
	uncrowd_map} is equivalent to the uncrowding map of Definition~\ref{uncrowd: Hawkes-Scrimshaw}
	in~\cite[Section 3.2]{HS.2019}.
\end{proposition}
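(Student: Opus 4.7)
My plan is to separately establish $P(T) = U_1$ and $Q(T) = F_1$ for $T \in \MVT(\lambda)$, exploiting the column-by-column structure shared by both algorithms.

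For the insertion tableau, the crucial reduction is that in MVT every cell has empty leg, so the intersection $(a,\ell] \cap \sfL_T(r,c)$ in Step~(b) of Definition~\ref{def: uncrowd_bump} is always empty; in particular $\mathcal{V}_b$ preserves zero leg excess, which combined with Corollary~\ref{well_defined_p} makes $P(T)$ a genuine SSYT. I would then strengthen Lemma~\ref{lem: knuth} to the full-word statement $R(\mathcal{V}_b(T)) \equiv R(T)$ in the MVT setting. The case structure is the same as in the proof of Lemma~\ref{lem: knuth}, and in every subcase $\mathcal{V}_b$ either preserves $R$ or modifies it by a single elementary Knuth move localized at the boundary between columns $c$ and $c+1$; the obstruction of the remark after Lemma~\ref{lem: knuth} relies on nontrivial leg entries and is ruled out on MVT. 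Iterating over the chain of $\mathcal{V}_b$'s making up $\mathcal{U}$, one obtains $R(P(T)) \equiv R(T)$, and since $U_1 = \mathrm{RSK}(R(T))$ is the unique SSYT whose column reading word lies in that Knuth class, $P(T) = U_1$.

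For the recording tableau, I would use that $\mathcal{V}$ always starts in the rightmost column with nonzero arm excess and only modifies columns to its right, so the applications of $\mathcal{V}$ during $\mathcal{U}$ group into phases indexed by $k = \lambda_1, \lambda_1 - 1, \ldots, 1$, with phase $k$ uncrowding exactly the arm entries originating in column $k$. This mirrors the recursion $F_{k+1} \leadsto F_k$ in Definition~\ref{uncrowd: Hawkes-Scrimshaw}. I would prove by induction on decreasing $k$ that at the end of phase $k$, the restriction of the intermediate $P$ to columns $\ge k$, re-indexed so that column $k$ becomes column $1$, equals $U_k$; the inductive step is the insertion-tableau argument above applied to the right part. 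Each cell added during phase $k$ at global column $\tilde c$ of $P$ then corresponds to column $\tilde c - k + 1$ in $U_k$ and receives label $\tilde c - k$ from $\Upsilon$; the subsequent $k-1$ right-shifts place it at global column $\tilde c$ in $F_1$ with unchanged label $\tilde c - k$, matching the entry $\tilde c - k$ that $\mathcal{U}$ records in $Q(T)$ at the same global column. The main obstacle is the full-word strengthening of Lemma~\ref{lem: knuth}: though morally routine because $\mathcal{V}_b$ on MVT is a disguised RSK bumping step, the formal verification requires a careful case analysis, with the empty-leg hypothesis as the key simplification that eliminates the counterexample configuration of the remark.
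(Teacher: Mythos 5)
Your proposal is correct, but it reaches the key identity $P(T)=U_1$ by a genuinely different route than the paper. The paper never analyzes full reading words: it proves $P(T)=U(T)$ crystal-theoretically --- for a highest-weight $S\in\MVT(\lambda)$ both $P(S)$ and $U(S)$ are highest-weight semistandard Young tableaux of the same weight (using \cite[Proposition 3.10]{HS.2019}), hence equal, and the general case follows because both maps intertwine with the crystal operators (Theorem~\ref{theorem.main} for $\mathcal{U}$, \cite[Theorem 3.17]{HS.2019} for $\Upsilon$) --- and then it matches the recording tableaux by exactly the column-restriction/phase observation you use, namely that uncrowding does not touch columns to the left of the starting column, so $(P^{(k)})_{\geqslant k}=P(T_{\geqslant k})=U(T_{\geqslant k})$ and the newly created boxes with their labels $\tilde c-c$ reproduce the $F_k$-recursion of Definition~\ref{uncrowd: Hawkes-Scrimshaw}. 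Your substitute for the crystal step does work: during the run of $\mathcal{U}$ on a multiset-valued tableau every intermediate tableau is again leg-free and the column being bumped into is arm-free, so each $\mathcal{V}_b$ step removes the letter $a$ from the end of the column-$c$ word and performs, in effect, a column insertion of $a$ into a strictly increasing column of single entries; this changes $R(T)$ by a short sequence of elementary Knuth moves (not literally ``a single elementary Knuth move'' as you assert --- when the bumped entry lies several rows below the top one needs several moves --- but Knuth equivalence is all you need), and the obstruction in the remark after Lemma~\ref{lem: knuth} indeed requires nonempty legs. With $P(T)$ an SSYT (arm excess zero by Corollary~\ref{well_defined_p}, leg excess zero preserved) and $R(P(T))\equiv R(T)$, the conclusion $P(T)=U_1$ follows as you say. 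In comparison, your argument is self-contained RSK combinatorics and makes explicit that $P(T)$ is the RSK insertion tableau of $R(T)$ on $\MVT$, at the price of a new case analysis strengthening Lemma~\ref{lem: knuth}; the paper's proof reuses already-established intertwining theorems and avoids any word-level analysis, while your treatment of $Q(T)=F_1$ is essentially the paper's.
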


\begin{proof}
	Recall from Definition~\ref{def: uncrowd_map}, that the pair of uncrowding and recording tableaux for $\mathcal{U}(T)$
	is denoted by $(P(T),Q(T)) = \mathcal{U}(T)$. Similarly, let us denote $(U(T),F(T)) := \Upsilon(T)$.
	
	Assume that $S\in \MVT(\lambda)$ is highest weight, that is, $e_i(S) = 0$ for $i \geqslant 1$.
	By~\cite[Proposition 3.10]{HS.2019}, row $i$ of $S$ only contains the letter $i$. Thus its weight is some partition 
	$\mu = (\mu_1,\mu_2,\dots,\mu_{\ell})$ if $\lambda = (\lambda_1,\lambda_2,\dots,\lambda_{\ell})$. 
	By Proposition~\ref{proposition.main} and Theorem~\ref{theorem.main}, $P(S)\in \SSYT$ is highest weight. 
	As weights of tableaux are preserved under uncrowding, the weight of $P(S)$ is equal to $\mu$.
	By a similar argument using~\cite[Theorem 3.17]{HS.2019}, $U(S) \in \SSYT$ is also highest weight with weight $\mu$. 
	Since highest weight semistandard Young tableaux are uniquely determined by 
	their weights, we have $P(S) = U(S)$.
	
	Recall that as long as $f_iT \neq 0$ for $T\in \MVT(\lambda)$, we have $U(f_i T) = f_i U(T)$ by~\cite[Theorem 3.17]{HS.2019} 
	and  $P(f_i T) = f_i P(T)$ by Theorem~\ref{theorem.main}.
	Now let $T\in \MVT(\lambda)$ be arbitrary. Then $T=f_{i_1} \cdots f_{i_k}(S)$ for some sequence of $i_1,\ldots,i_k$ and $S$
	highest weight. Hence,
	\[
		P(T) = P(f_{i_1} \cdots f_{i_k} S) = f_{i_1} \cdots f_{i_k} P(S) = f_{i_1} \cdots f_{i_k} U(S)
		= U(f_{i_1} \cdots f_{i_k} S) = U(T).
	\]
	
	It remains to show that $Q(T)=F(T)$ 	for all $T\in\MVT(\lambda)$. 
	To do this, we show that the newly created boxes of the 
	uncrowding map up to a specified column in Definition \ref{uncrowd: 
	Hawkes-Scrimshaw} are in the 
	same positions as those for the uncrowding insertion in Definition~\ref{def: uncrowd_map}. 
	For every $Y\in\MVT(\mu)$ and for every $1\leqslant j\leqslant \mu_1$, denote by
	$Y_{\geqslant j}$ the tableau formed by the rightmost $j$ columns of $Y$; 
	here $Y_{\geqslant {\mu_1+1}}$ is the empty tableau.
	
	Let $T \in \MVT(\lambda)$ be arbitrary.	
	For $1\leqslant k\leqslant \lambda_1+1$, let $P^{(k)}$ be the tableau obtained by 
	performing the uncrowding map $\mathcal{U}$ on $T$ on the columns from right to left up to and including
	the $k$-th column of $T$; here $P^{(\lambda_1+1)}=T$. 
	In other words, $P^{(k)}=\mathcal{V}^{\alpha_k}(T)$ as in Definition 
	\ref{def: uncrowd_insert}, where $\alpha_k$ is the arm excess of $T_{\geqslant k}$.
	As the entries to the left of column $k$ of $T$ are untouched by the 
	uncrowding insertion in Definition~\ref{def: uncrowd_insert}, for every 
	$1\leqslant k\leqslant \lambda_1+1$, we have $(P^{(k)})_{\geqslant k}=
	P(T_{\geqslant k})=U(T_{\geqslant k})$.
	It follows that for every $1\leqslant k\leqslant \lambda_1$, up to horizontal shifts, 
	the newly formed boxes in $\shape(P^{(k)})/\shape(P^{(k+1)})
	=\shape[(P^{(k)})_{\geqslant k+1}]/\shape[(P^{(k+1)})_{\geqslant k+1}]$ and 
	$\shape([U(T_{\geqslant k})]_{\geqslant k+1})/\shape([U(T_{\geqslant k+1})]_{\geqslant k+1})$ 
	are in the same positions.	
	Since the entries in these boxes both record the difference in column 
	indices relative to the $k$-th column for each $1\leqslant k\leqslant \lambda_1$ and 
	since the recording tableaux for both maps are formed from the union of 
	these boxes, we conclude that $Q(T)=F(T)$, completing the proof.
\end{proof}

\subsection{Crowding map}
\label{section.crowding}

In this section, we give a description of the ``inverse'' of the uncrowding map.
\smallskip

We begin by introducing some notation. Let $F\in \hat{\mathcal{F}}$ with $e$ entries. For each cell $(r,c)$ in 
$F$ with entry $F(r,c)$, define the corresponding \defn{destination column} to be $d(r,c) = c-F(r,c)$. 
Define the \defn{crowding order} on $F$ by ordering all the cells in $F$ with a filling, first determined by their 
destination column (smallest to largest) and then by column index (largest to smallest). Denote the order by 
$(r_1,c_1),(r_2,c_2),\dots, (r_e,c_e)$. Set $\alpha(F) = (\alpha_1,\alpha_2, \dots, \alpha_e)$, where $\alpha_i = F(r_i,c_i)$. 
Let the arm excess for a column of a hook-valued tableau
be the sum of arm excesses of all its cells.

\begin{definition}\label{def: cb}
Let $h\in \HVT$ and let $(r,c)$ be a cell in $h$ with $c>1$ and with at most one element in $\sfA_h(r,c)$. If $\sfA_h(r,c)$ is empty, we also 
require that the cell $(r,c)$ is a corner cell in $h$. Then we
define the \defn{crowding bumping} $\mathcal{C}_b$ on the pair $[h,(r,c)]$ by the following algorithm:
\begin{enumerate}
\item If $\sfA_h(r,c)$ is nonempty, set $m$ to be the only element in
$\sfA_h(r,c)$ and $b=\max\{x\in\sfL^+_h(r,c) \mid x\leqslant m\}$. Otherwise, set
$m=\sfH_h(r,c)$ and $b=\max(\sfL^+_h(r,c))$.
\label{alg: m,b}
\item Find the largest $r'$ such that $\sfH_h(r',c-1)\leqslant b$.
If $r'=r$, set $q=\sfH_h(r,c)$. Otherwise, set $q=b$.
\label{alg: r'}
In either case, append $q$ to $\sfA_h(r',c-1)$.
\item
\begin{enumerate}
	\item If $r'$ from Step \ref{alg: r'} equals $r$, perform either of the following:
	\begin{enumerate}
	\item If $\sfA_h(r,c)$ is nonempty, move the set $\{x\in \sfL_h(r,c) \mid q<x\leqslant m\}$
	from $\sfL_h(r,c)$ to $\sfL_h(r',c-1)$ and keep it strictly increasing. Remove $m$ from $\sfA_h(r,c)$ and 
	set $H_h(r,c)= m$.\label{alg:3-a-i}
	\item Otherwise, $\sfA_h(r,c)$ is empty, so move $\sfL_h(r,c)$ into
	$\sfL_h(r',c-1)$ and keep it to be strictly increasing. Remove cell $(r,c)$ from $h$.\label{alg:3-a-ii}
	\end{enumerate}
	\item Otherwise, $r'\neq r$ and perform either of the following:
	\begin{enumerate}
		\item Suppose that $\sfA_h(r,c)$ is nonempty.
		Replace $q$ in $\sfL^+_h(r,c)$ with $m$.
		Remove $m$ from $\sfA_h(r,c)$.\label{alg:3-b-i}
		\item If instead $\sfA_h(r,c)$ is empty, then remove cell $(r,c)$ from $h$.\label{alg:3-b-ii}
	\end{enumerate}
\end{enumerate}
\label{alg: move}
\end{enumerate}
Denote the resulting (not necessarily semistandard) hook-valued tableau by $h'$. We write $\mathcal{C}_b([h,(r,c)]) = [h',(r',c-1)]$. We 
also define the \defn{projections} $p_1$ and $p_2$ by $p_1 \circ \mathcal{C}_b([h,(r,c)])=h'$ and $p_2 \circ \mathcal{C}_b([h,(r,c)])=(r',c-1)$. 
See Figures~\ref{fig: cb-a} and \ref{fig: cb-b} for illustration.
\begin{figure}[h!]
	\begin{minipage}{0.45\textwidth}
		\[
			{\def\mc#1{\makecell[lb]{#1}}
			{\begin{array}[lb]{*{2}{|l}|}\cline{1-2}
				\mc{- \\--}&\mc{-\\b\\\ast\\q \,m}\\\cline{1-2}
				\end{array}} \xrightarrow[]{\mathcal{C}_b}{\begin{array}[lb]{*{2}{|l}|}\cline{1-2}
				\mc{b\\\ast \\-\\--q}&\mc{-\\m}\\\cline{1-2}
				\end{array}}
			 }
		\]
	\end{minipage}
	\begin{minipage}{0.45\textwidth}
		\[
			{\def\mc#1{\makecell[lb]{#1}}
			{{\begin{array}[lb]{*{2}{|l}|}\cline{1-2}
				\mc{- \\--}&\mc{b\\\ast\\m}\\\cline{1-2}
				\end{array}}
				 \xrightarrow[]{\mathcal{C}_b}\begin{array}[lb]{*{2}{|l}|}\cline{1-1}
					\mc{b\\\ast \\-\\--m}\\\cline{1-1}
					\end{array}}
				}
		\]
	\end{minipage}
	\caption{When $r'=r$. Left: (i) $\sfA_{h}(r,c) \neq \emptyset$.\quad Right: (ii) $\sfA_{h}(r,c) = \emptyset$.}\label{fig: cb-a}
\end{figure}
\begin{figure}[h!]
\begin{minipage}{0.45\textwidth}
	\[
		{\def\mc#1{\makecell[lb]{#1}}
		{\begin{array}[lb]{*{2}{|l}|}\cline{1-1}
			\mc{-\\--}\\\cline{1-2}
			\mc{- \\- -}&\mc{-\\b\\-\,m}\\\cline{1-2}
			\end{array}} \xrightarrow[]{\mathcal{C}_b}{\begin{array}[lb]{*{2}{|l}|}\cline{1-1}
			\mc{-\\--b}\\\cline{1-2}
			\mc{- \\- -}&\mc{-\\m\\-}\\\cline{1-2}
			\end{array}}}
	\]
\end{minipage}
\begin{minipage}{0.45\textwidth}
	\[
		{\def\mc#1{\makecell[lb]{#1}}
		{\begin{array}[lb]{*{2}{|l}|}\cline{1-1}
			\mc{-\\--}\\\cline{1-2}
			\mc{- \\-}&\mc{\color{darkred}{*}\\m}\\\cline{1-2}
			\mc{- \\ - -}&\mc{-\\-}\\\cline{1-2}
			\end{array}} \xrightarrow[]{\mathcal{C}_b}{\begin{array}[lb]{*{2}{|l}|}\cline{1-1}
			\mc{-\\--m}\\\cline{1-1}
			\mc{- \\-}\\\cline{1-2}
			\mc{- \\ - -}&\mc{-\\-}\\\cline{1-2}
			\end{array}}}
	\]
\end{minipage}
	\caption{When $r' \neq r$. Left: $\sfA_{h}(r,c) \neq \emptyset$.\quad Right: $\sfA_{h}(r,c) = \emptyset$.}\label{fig: cb-b}
	\end{figure}
\end{definition}

\begin{example}\label{social-distancing}
	We compute $\mathcal{C}_b$ in two examples:
	\[
	T =\,
	{\def\mc#1{\makecell[lb]{#1}}
	{\begin{array}[lb]{*{2}{|l}|}\cline{1-1}
	\mc{5}\\\cline{1-2}
	\mc{1\;1}&\mc{5\\4\\3\\2\;4}\\\cline{1-2}
	\end{array}}\, ,\quad
	\mathcal{C}_b([T,(1,2)]) = 	[\;
	{\begin{array}[lb]{*{2}{|l}|}\cline{1-1}
	\mc{5}\\\cline{1-2}
	\mc{4\\3\\1\;1\;2}&\mc{5\\4}\\\cline{1-2}
	\end{array}}\,,(1,1)\,] = [\;T'\,,(1,1)\,].}
	\]
	\[
	S =\,
	{\def\mc#1{\makecell[lb]{#1}}
	{\begin{array}[lb]{*{2}{|l}|}\cline{1-1}
	\mc{3}\\\cline{1-2}
	\mc{2\\1}&\mc{3\\2}\\\cline{1-2}
	\end{array}}\, ,\quad
	\mathcal{C}_b([S,(1,2)]) = 	[\;
	{\begin{array}[lb]{*{2}{|l}|}\cline{1-1}
	\mc{33}\\\cline{1-1}
	\mc{2\\1}\\\cline{1-1}
	\end{array}}\,,(2,1)\,] = [\;S'\,,(2,1)\,].}
	\]
\end{example}

\smallskip

\begin{remark}\label{remark: loss}
In Definition~\ref{def: cb},
\begin{itemize}
	\item if $r'= r$, then $h'$ is always semistandard and has the same weight as $h$;
	\item if $r'\neq r$ and $\sfA_h(r,c)$ is empty, then $h'$ might have fewer letters than $h$. In Example~\ref{social-distancing}, 
	$S$ contains $5$ letters while $S'$ only contains $4$. This happens precisely when $\sfL_h(r,c)$ is nonempty.
\end{itemize}
In principle, the arm in cell $(r',c-1)$ could be greater than the $q$ that is to be inserted. However, we only consider the cases as defined 
in the order described by the next paragraph. We refer to Proposition~\ref{prop: semistandard} which states that all tableaux we deal with in 
this section are indeed semistandard hook-valued tableaux.
\end{remark}

Let $(S,F)\in \SVT(\mu) \times \mathcal{\hat{F}}(\mu/\lambda)$ with crowding order $(r_1,c_1),(r_2,c_2),\dots, (r_e,c_e)$ and
$\alpha(F) = (\alpha_1,\alpha_2, \dots, \alpha_e)$. For all $0 \leqslant j \leqslant e-1$ and for all $0\leqslant 
s\leqslant\alpha_{j+1}$, define 
$T^{(s)}_j$ recursively by setting $T^{(0)}_0:=S$ and
\begin{equation*}
T^{(s)}_j := \begin{cases}
p_1 \circ \mathcal{C}_b([T^{(s-1)}_j,(r_{j+1},c_{j+1})]) &\text{when 
$s>0$,}\\
T^{(\alpha_j)}_{j-1} &\text{when $s=0$ and $j>0$.}
	\end{cases}
\end{equation*}
Additionally, define $T^{(0)}_e:= T^{(\alpha_e)}_{e-1}$.

Thus we obtain the following sequence
\[
S = T^{(0)}_0\, \xrightarrow[(r_1,c_1)]{p_1\circ\,\mathcal{C}^{\alpha_1}_b}\,	
T^{(0)}_1 \,
\xrightarrow[(r_2,c_2)]{p_1\circ\,\mathcal{C}^{\alpha_2}_b}\, 
T^{(0)}_2\,
\xrightarrow[(r_3,c_3)]{p_1\circ\,\mathcal{C}^{\alpha_3}_b}\,\dots 
\xrightarrow[(r_e,c_e)]{p_1\circ\,\mathcal{C}^{\alpha_e}_b} \, T^{(0)}_e.
\]

\begin{remark}
The tableaux $T^{(s)}_j$ are well-defined. 
We check the conditions in Definition~\ref{def: cb}. 
Let $h= T^{(s)}_j$ for some $0\leqslant j\leqslant e-1$ and for some $0\leqslant
s<\alpha_{j+1}$, with cell $(r,c)$.
\begin{itemize}
	\item Since $F\in \mathcal{\hat{F}}$, we always have $c>1$.\
	\item The case that $\sfA_h(r,c)$ is empty can only occur in $T_{j-1}^{(0)}$ for 
	some $j>0$.
	In this case, $(r,c)=(r_j,c_j)$, which is a corner cell.
	\item 
		Consider the $\alpha_j$ steps in $T^{(0)}_{j-1}\, 
		\xrightarrow[(r_j,c_j)]{p_1\circ\,\mathcal{C}^{\alpha_j}_b}\, 
		T^{(0)}_{j}$. We first delete cell $(r_j,c_j)$, which has no arm. Then at every step 
		after that, we move leftward one 
		column at a time. Before we reach column $d(r_j,c_j)$, there is exactly one column with arm excess being 1 and the rest has zero 
		arm excess among columns to the right of $d(r_j,c_j)$ since recall that the cells $(r_j,c_j)$ are ordered from smallest to largest
		destination column. Once we reach column $d(r_j,c_j)$, the cell there may contain more than 
		one arm element, but we then go to $(r_{j+1},c_{j+1})$, which is a corner cell instead. Thus 
		there is at most one element in $\sfA_h(r,c)$.
\end{itemize}
\end{remark}

\begin{definition}
	With the same notation as above, define the \defn{insertion path} of 
	$T^{(0)}_{j-1} \to T^{(0)}_j$
	for $1\leqslant j \leqslant e$ to be
	\[
		\mathsf{path}_j:= \left((r^{(0)}_j,c^{(0)}_j),(r^{(1)}_j,c^{(1)}_j),\dots, (r^{(\alpha_j)}_j,c^{(\alpha_j)}_j)\right),
	\]
	where $(r^{(s)}_j,c^{(s)}_j):= p_2 \circ 
	\mathcal{C}^s_b([T^{(0)}_{j-1},(r_j,c_j)])$ for $0\leqslant s \leqslant 
	\alpha_j$.
\end{definition}

\begin{example}\label{eg: crowding}
	Consider the following pair of tableaux $(S,F) \in \HVT((5,3,2)) \times \mathcal{\hat{F}}((5,3,2)/((3,2,1)))$,
	\[
		{\def\mc#1{\makecell[lb]{#1}}
		{S\, = \begin{array}[lb]{*{5}{|l}|}\cline{1-2}
		\mc{5\\4}&\mc{5}\\\cline{1-3}
		\mc{2}&\mc{3}&\mc{4\\3}\\\cline{1-5}
		\mc{1}&\mc{1}&\mc{2\\1}&\mc{4}&\mc{4}\\\cline{1-5}
		\end{array}}}, \quad F = \begin{ytableau}
			*(gray) & 1 \\
			*(gray) & *(gray) & 1 \\
			*(gray) & *(gray) & *(gray) & 3 & 4
		\end{ytableau}\,.
	\]
The crowding order is $(1,5),(1,4),(3,2),(2,3)$. The insertion path and destination column for each of them are:
\begin{align*}
\mathsf{path}_1 =  ((1,5),(1,4),(2,3),(2,2),(2,1))&, \;d(1,5)=1,\\
\mathsf{path}_2 =  ((1,4),(2,3),(2,2),(3,1))&, \; d(1,4)=1,\\
\mathsf{path}_3 = ((3,2),(3,1))&, \; d(3,2)=1, \\
\mathsf{path}_4 = ((2,3),(2,2))&,\; d(2,3)=2.
\end{align*}
We obtain the sequence from the algorithm:
\[
	{\def\mc#1{\makecell[lb]{#1}}
	{\begin{array}[lb]{*{5}{|l}|}\cline{1-2}
	\mc{5\\4}&\mc{5}\\\cline{1-3}
	\mc{2}&\mc{3}&\mc{4\\3}\\\cline{1-5}
	\mc{1}&\mc{1}&\mc{2\\1}&\mc{4}&\mc{4}\\\cline{1-5}
	\end{array}}}	 \xrightarrow[(1,5)]{p_1\circ\,\mathcal{C}^4_b}{\def\mc#1{\makecell[lb]{#1}}
	{\begin{array}[lb]{*{4}{|l}|}\cline{1-2}
	\mc{5\\4}&\mc{5}\\\cline{1-3}
	\mc{23}&\mc{4\\3}&\mc{4}\\\cline{1-4}
	\mc{1}&\mc{1}&\mc{2\\1}&\mc{4}\\\cline{1-4}
	\end{array}}}  \xrightarrow[(1,4)]{p_1\circ\,\mathcal{C}^3_b}{\def\mc#1{\makecell[lb]{#1}}
	{\begin{array}[lb]{*{3}{|l}|}\cline{1-2}
	\mc{5\\44}&\mc{5}\\\cline{1-3}
	\mc{23}&\mc{4\\3}&\mc{4}\\\cline{1-3}
	\mc{1}&\mc{1}&\mc{2\\1}\\\cline{1-3}
	\end{array}}}  \xrightarrow[(3,2)]{p_1\circ\,\mathcal{C}_b}{\def\mc#1{\makecell[lb]{#1}}
	{\begin{array}[lb]{*{3}{|l}|}\cline{1-1}
	\mc{5\\445}\\\cline{1-3}
	\mc{23}&\mc{4\\3}&\mc{4}\\\cline{1-3}
	\mc{1}&\mc{1}&\mc{2\\1}\\\cline{1-3}
	\end{array}}}  \xrightarrow[(2,3)]{p_1\circ\,\mathcal{C}_b}{\def\mc#1{\makecell[lb]{#1}}
{\begin{array}[lb]{*{3}{|l}|}\cline{1-1}
\mc{5\\445}\\\cline{1-2}
\mc{23}&\mc{4\\34}\\\cline{1-3}
\mc{1}&\mc{1}&\mc{2\\1}\\\cline{1-3}
\end{array}}}.
\]
\end{example}

\begin{lemma}\label{lem: path}
If $d(r_j,c_j) = d(r_{j+1},c_{j+1})$, then $\mathsf{path}_{j+1}$ is weakly above $\mathsf{path}_j$.
\end{lemma}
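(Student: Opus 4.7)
The plan is to prove the lemma by induction on the step index along the two paths, after a preliminary reduction. First, the tie-breaking rule in the crowding order (same destination column, largest column index first) forces $c_j > c_{j+1}$. Set $s_0 := c_j - c_{j+1} \geqslant 1$; since both paths terminate at the common column $d := d(r_j,c_j) = d(r_{j+1},c_{j+1})$, we have $\alpha_{j+1} = c_{j+1} - d = \alpha_j - s_0$, so the claim reduces to showing
\[
    r_{j+1}^{(s)} \geqslant r_j^{(s+s_0)} \qquad \text{for all } 0 \leqslant s \leqslant \alpha_{j+1}.
\]

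For the base case $s=0$, the plan is to analyze the state of $T_j^{(0)}$ at column $c_{j+1}$. When $\mathsf{path}_j$ traversed this column it modified the cell $(r_j^{(s_0)}, c_{j+1})$ by appending an element to its arm at step $s_0$, and then at step $s_0+1$ bumped leftward, restoring zero arm excess there (or removing the cell entirely). Combining column-strictness of $F$ with the requirement that $(r_{j+1}, c_{j+1})$ be a corner cell of $T_j^{(0)}$ (if its arm is empty) or carry arm excess exactly one, I would argue that $(r_{j+1}, c_{j+1})$ cannot lie strictly below $(r_j^{(s_0)}, c_{j+1})$: if it did, the cell lying on $\mathsf{path}_j$ in column $c_{j+1}$ would violate the corner condition for $(r_{j+1}, c_{j+1})$ or would carry arm excess inconsistent with the fact that cells of $F$ with destination column $d$ are processed from largest column index to smallest.

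For the inductive step, suppose $r_{j+1}^{(s)} \geqslant r_j^{(s+s_0)}$ at column $c := c_{j+1} - s$. Let $b_j, m_j, q_j$ and $b_{j+1}, m_{j+1}, q_{j+1}$ denote the values computed via Step~\ref{alg: m,b} of Definition~\ref{def: cb} for the two paths at this step. The tableau on which $\mathsf{path}_{j+1}$ acts at step $s$ differs from the tableau on which $\mathsf{path}_j$ acted at step $s+s_0$ only along $\mathsf{path}_j$'s trajectory; in particular, in column $c-1$ only the hook entries in rows previously visited by $\mathsf{path}_j$ may have been altered, and then only weakly upward. Combined with the ``largest $r'$ with $\sfH_h(r', c-1) \leqslant b$'' rule of Step~\ref{alg: r'}, and with the comparison $b_{j+1} \geqslant b_j$ (which follows from the induction hypothesis together with semistandardness of the intermediate tableaux), this yields $r_{j+1}^{(s+1)} \geqslant r_j^{(s+s_0+1)}$.

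The main obstacle will be the case analysis required in both the base case and the inductive step: the two paths act on different intermediate tableaux, so one must carefully distinguish between the cases $r_{j+1}^{(s)} = r_j^{(s+s_0)}$ and strict inequality, between subcases (a) and (b) of Definition~\ref{def: cb}, and between whether the relevant arms are empty or contain exactly one element. The argument parallels the uncrowding result in Lemma~\ref{well_defined_q}, but with the extra wrinkle that the intermediate hook entries and cell contents already reflect the earlier completion of $\mathsf{path}_j$, so the comparison of the thresholds $b$ and $q$ must be tracked column by column throughout the induction.
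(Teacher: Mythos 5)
Your reduction and base case essentially match the paper's proof: the tie-break in the crowding order gives $c_j>c_{j+1}$, the claim becomes $r_{j+1}^{(s)}\geqslant r_j^{(s+s_0)}$ at aligned steps, and for $s=0$ the corner-cell property of $(r_{j+1},c_{j+1})$ in $T_j^{(0)}$ already suffices, since the cell of $\mathsf{path}_j$ in column $c_{j+1}$ is still present in that column (the appeal to column-strictness of $F$ and to ``arm excess exactly one'' is beside the point: the start cell of each path has empty arm because all earlier destination columns lie weakly left of $d$). The genuine gap is in your inductive step. You take as induction hypothesis only the row comparison $r_{j+1}^{(s)}\geqslant r_j^{(s+s_0)}$ and assert that $b_{j+1}\geqslant b_j$ ``follows from the induction hypothesis together with semistandardness.'' It does not in the sub-case where the two paths occupy the same cell $X=(r,c)$, $c=c_{j+1}-s$, and $\mathsf{path}_j$ exited $X$ via Step~\ref{alg:3-b-i} of Definition~\ref{def: cb} with the replaced element of $\sfL^+(X)$ a leg entry rather than the hook entry: then the hook entry of $X$ is unchanged, and semistandardness of $T_j^{(s)}$ only forces the newly deposited arm entry $m_{j+1}=q_{j+1}^{(s)}$ to be at least that (possibly small) hook entry; it does not force $q_{j+1}^{(s)}\geqslant q_j^{(s+s_0)}$, nor even $q_{j+1}^{(s)}\geqslant b_j^{(s+s_0+1)}$, and without such a bound $b_{j+1}^{(s+1)}$ could a priori drop below $b_j^{(s+s_0+1)}$ and the row selected in column $c-1$ below $r_j^{(s+s_0+1)}$. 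The inequality $q_{j+1}^{(s)}\geqslant q_j^{(s+s_0)}$ is true, but it is precisely what must be carried through the induction: the paper strengthens the induction hypothesis to this value comparison and verifies it by the case analysis recorded in Figures~\ref{fig: insertion_0} and~\ref{fig: insertion}. Your closing remark that ``the comparison of the thresholds $b$ and $q$ must be tracked column by column'' names the right repair, but as written your step derives the row inequality from hypotheses that do not imply it.

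A secondary imprecision: observing that hook entries in column $c-1$ are altered by $\mathsf{path}_j$ ``only weakly upward'' is, by itself, the wrong direction — a hook raised above the threshold would push the row chosen by Step~\ref{alg: r'} down, not up. What you actually need is that the unique altered hook in column $c-1$, at $(r_j^{(s+s_0+1)},c-1)$, is bounded above by the value $q_j^{(s+s_0+1)}$ deposited there, and that $q_j^{(s+s_0+1)}\leqslant b_j^{(s+s_0+1)}$; only with this observation does $b_{j+1}^{(s+1)}\geqslant b_j^{(s+s_0+1)}$ yield $r_{j+1}^{(s+1)}\geqslant r_j^{(s+s_0+1)}$. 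This is an easy fix, but it must be stated; the paper's proof supplies it implicitly through the explicit cell contents in its case analysis.
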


\begin{proof}
By the definition of crowding order, $d(r_j,c_j) = d(r_{j+1},c_{j+1})$ implies $c_j>c_{j+1}$. Set $z_j := c_j-c_{j+1}$. Then we have
$c_j^{(s+z_j)} = c_j-z_j-s = c_{j+1}-s = c_{j+1}^{(s)}$ for $0\leqslant s \leqslant \alpha_{j+1}$. We need to show that 
$r_{j+1}^{(s)} \geqslant r_j^{(s+z_j)}$ for $0\leqslant s \leqslant \alpha_{j+1}$.
Computing $T^{(s)}_{j-1}$ from $T^{(s-1)}_{j-1}$ for $1\leqslant s \leqslant 
\alpha_j$, 
we denote $b$ and $q$ in Step~\ref{alg: m,b} and Step~\ref{alg: r'} of 
Definition~\ref{def: cb} by $b_j^{(s)}$ and $q_j^{(s)}$.

Since $(r_{j+1},c_{j+1})$ is a corner cell in $T^{(z_j)}_{j-1}$, we have 
$r^{(0)}_{j+1}\geqslant r^{(z_j)}_{j}$. 
We prove that, for $1\leqslant s \leqslant \alpha_{j+1}$, we have that 
$q_{j+1}^{(s)}\geqslant q_{j}^{(s+z_j)}$, which implies 
$b_{j+1}^{(s)}\geqslant b_{j}^{(s+z_j)}$ and thus $r_{j+1}^{(s)}\geqslant r_{j}^{(s+z_j)}$.

We prove $q_{j+1}^{(s)}\geqslant q_{j}^{(s+z_j)}$ by induction on $s$. First we check the case $k=1$. If $r^{(0)}_{j+1} > r_{j}^{(z_j)}$, then 
it is obvious that $q_{j+1}^{(1)} > q_j^{(z_j+1)}$. Otherwise if $r^{(0)}_{j+1} 
= r_{j}^{(z_j)}$, we consider the following cases. 
$q_j^{(z_j)}$ is the only element in $\sfA_{T^{(z_j)}_{j-1}}(r_{j+1},c_{j+1})$.
Let $x = \sfH_{T^{(z_j)}_{j-1}}(r_{j+1},c_{j+1})$, $y = 
\max(\sfL_{T^{(z_j)}_{j-1}}(r_{j+1},c_{j+1}))$ and 
$y' = \max\{z \in 
\sfL^+_{T^{(z_j)}_{j-1}}(r_{j+1},c_{j+1})\mid  z \leqslant 
q_j^{(z_j)}\}$.
See Figure~\ref{fig: insertion_0} for illustration.

\smallskip\noindent
\textbf{Case (1):} If $r_j^{(z_j+1)} = r_j^{(z_j)}$, then $q_j^{(z_j+1)} = x$. If $r_{j+1}^{(1)} = r_{j+1}^{(0)}$, then $q_{j+1}^{(1)} = q_j^{(z_j)}$. 
If $r_{j+1}^{(1)} \neq r_{j+1}^{(0)}$, then $q_{j+1}^{(1)}$ equals $y$ when $y>y'$ and $q_j^{(z_j)}$ when $y=y'$. In both cases 
$q_{j+1}^{(1)} \geqslant x = q_j^{(z_j+1)}$.

\begin{figure}[h!]
	\[
	{\def\mc#1{\makecell[lb]{#1}}
	{\begin{array}[lb]{*{1}{|l}|}\cline{1-1}
	\mc{y\\-\\y'\\\ast\\x\quad q_j^{(z_j)}}\\\cline{1-1}
	\end{array}}\qquad {\begin{array}[lb]{*{1}{|l}|}\cline{1-1}
		\mc{y\\-\\q_j^{(z_j)}}\\\cline{1-1}
		\end{array}}\qquad {\begin{array}[lb]{*{1}{|l}|}\cline{1-1}
			\mc{y\\-\\q_j^{(z_j)}\\\ast\\x}\\\cline{1-1}
			\end{array}}}\]
		\caption{Cell $(r^{(0)}_{j+1},c^{(0)}_{j+1}) = (r^{(z_j)}_{j},c^{(z_j)}_{j})$ in $T^{(z_j)}_{j-1}$ (left);
		\protect\\ in $T^{(0)}_{j}$, case(1) (middle), \protect\; case(2) (right).}
	\label{fig: insertion_0}
\end{figure}
\smallskip\noindent
\textbf{Case (2):} If $r_j^{(z_j+1)} \neq r_j^{(z_j)}$, then $q_j^{(z_j+1)} = y'$. 
In this case we have $\sfH_{T^{(z_j)}_{j-1}}(r_{j+1}+1,c_{j+1}-1) \leqslant y' \leqslant y$. 
Since $\sfH_{T^{(0)}_j}(r_{j+1}+1,c_{j+1}-1)$ is smaller or equal to $y'$, we 
have that $r_{j+1}^{(1)} \neq r_{j+1}^{(0)}$. Therefore $q_{j+1}^{(1)}$ equals 
$y$ when $y>y'$ and $q_j^{(z_j)}$ when $y=y'$. In this 
case $q_{j+1}^{(1)} \geqslant y' = q_j^{(z_j+1)}$.
\smallskip

Now we have proved the base case $s=1$. Next, suppose it holds for some $s\geqslant 1$ that $q_{j+1}^{(s)}\geqslant q_j^{(s+z_j)}$ 
and $r_{j+1}^{(s)}\geqslant r_j^{(s+z_j)}$. The statement is similar to the argument of the base case. If $r_{j+1}^{(s)}>r_j^{(z_j+s)}$, it
is obvious that $q_{j+1}^{(s+1)}> q_j^{(s+1+z_j)}$ and thus $r_{j+1}^{(s+1)}\geqslant r_j^{(s+1+z_j)}$. If $r_{j+1}^{(s)} = r_j^{(z_j+s)}$, 
we discuss the following cases. $q_j^{(s+z_j)}$ is the only element in $\sfA_{T^{(s+z_j)}_{j-1}}(r^{(s+z_j)}_{j},c^{(s+z_j)}_{j})$.
Let $x = \sfH_{T^{(s+z_j)}_{j-1}}(r^{(s+z_j)}_{j},c^{(s+z_j)}_{j})$, 
$y = \max(\sfL_{T^{(s+z_j)}_{j-1}}
(r^{(s+z_j)}_{j},c^{(s+z_j)}_{j}))$ and $y' = \max\{z \in 
\sfL^+_{T^{(s+z_j)}_{j-1}}(r^{(s+z_j)}_{j},c^{(s+z_j)}_{j})\mid z \leqslant q_j^{(s+z_j)}\}$. 
See Figure~\ref{fig: insertion} for illustration.

\smallskip\noindent
\textbf{Case (1):} If $r_j^{(s+1+z_j)} = r_j^{(s+z_j)}$, then $q_j^{(s+1+z_j)} = x$. If $r_{j+1}^{(s+1)} = r_{j+1}^{(s)}$, then $q_{j+1}^{(s+1)} 
= q_j^{(s+z_j)} \geqslant x$. 
If $r_{j+1}^{(s+1)} \neq r_{j+1}^{(s)}$, then $q_{j+1}^{(s+1)} = 
\max\{z\in \sfL^+_{T^{(s)}_j}(r_{j+1}^{(s)},c_{j+1}^{(s)}) \mid z\leqslant 
q_{j+1}^{(s)}\}\geqslant q_j^{(s+z_j)}\geqslant x$. 
So in either case we have $q_{j+1}^{(s+1)}\geqslant q_j^{(s+1+z_j)}$.

\begin{figure}[h!]
	\[
	{\def\mc#1{\makecell[lb]{#1}}
	{\begin{array}[lb]{*{1}{|l}|}\cline{1-1}
	\mc{y\\-\\y'\\\ast\\x\quad q_j^{(s+z_j)}}\\\cline{1-1}
	\end{array}}\qquad {\begin{array}[lb]{*{1}{|l}|}\cline{1-1}
		\mc{y\\-\\q_j^{(s+z_j)}\quad q_{j+1}^{(s)}}\\\cline{1-1}
		\end{array}}\qquad {\begin{array}[lb]{*{1}{|l}|}\cline{1-1}
			\mc{y\\-\\q_j^{(s+z_j)}\\\ast\\x\quad q_{j+1}^{(s)}}\\\cline{1-1}
			\end{array}}}\]
		\caption{Cell $(r^{(s)}_{j+1},c^{(s)}_{j+1}) = 
		(r^{(s+z_j)}_{j},c^{(s+z_j)}_{j})$ in $T^{(s+z_j)}_{j-1}$ (left); 
		\protect\\in $T^{(s)}_j$, case(1) (middle),\protect\; case(2) (right).}
	\label{fig: insertion}
\end{figure}

\smallskip\noindent
\textbf{Case (2):} If $r_j^{(s+1+z_j)} \neq r_j^{(s+z_j)}$, then 
$q_j^{(s+1+z_j)} = y'$. In this case we have 
$\sfH_{T^{(s+z_j)}_{j-1}}(r_{j}^{(s+z_j)}+1,c_{j}^{(s+z_j)}-1)\leqslant 
y'\leqslant q_j^{(s+z_j)}$. 
Since $\sfH_{T^{(s)}_j}(r^{(s)}_{j+1}+1,c^{(s)}_{j+1}-1)$ is smaller or equal 
to $q_j^{(s+z_j)}$, we have that 
$r^{(s+1)}_{j+1} \neq r_{j+1}^{(s)}$. 
Therefore $q_{j+1}^{(s+1)} = \max\{z\in 
\sfL^+_{T^{(s)}_j}(r^{(s)}_{j+1},c^{(s)}_{j+1})\mid z\leqslant 
q^{(s)}_{j+1}\}$. 
By induction we 
have $q_j^{(s+z_j)}\leqslant q_{j+1}^{(s)}$, thus $q_{j+1}^{(s+1)} \geqslant q_j^{(s+z_j)}\geqslant y' = q_j^{(s+1+z_j)}$.
This completes the proof.
\end{proof}

\begin{lemma}\label{lem: leftmost-topmost}
With the notations as above, let $0 \leqslant j \leqslant e-1$, $0\leqslant  s<\alpha_{j+1}$ 
and $\mathcal{C}_b([T^{(s)}_{j},(r,c)]) = [\,T^{(s+1)}_{j},(r',c-1)]$ for some 
$r,c, r'$. Then in $T^{(s+1)}_{j}$, 
column $c-1$ is the rightmost column with nonzero arm excess and $(r',c-1)$ is the topmost cell in column $c-1$ with nonzero arm excess.
\end{lemma}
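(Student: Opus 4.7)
The plan is to prove both assertions by induction on $(j,s)$ in lexicographic order. The key structural input I will extract before starting is the following consequence of the crowding order together with the column-flagged property of $F$ (every entry is at least $1$): for all $i \leq j$ we have $d(r_i,c_i) \leq d(r_{j+1},c_{j+1}) \leq c_{j+1}-1$. This ensures that for every intermediate step of path $j+1$ (i.e., $s$ with $s+1 < \alpha_{j+1}$) the column $c-1 = c_{j+1}-s-1$ strictly exceeds $d(r_{j+1},c_{j+1})$, and hence is not the destination of any previous path; moreover, for $s \geq 1$ also $c = c_{j+1}-s$ is not a previous destination.

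For the base case $(j,s)=(0,0)$, the tableau $T^{(0)}_0 = S$ is set-valued, so $(r_1,c_1)$ is a corner with empty arm; the crowding bump simply deletes this cell and appends a single arm entry at $(r',c_1-1)$, making both assertions immediate. For the inductive step, the inductive hypothesis applied to $(j,s-1)$ (or to $(j-1,\alpha_j-1)$ when $s=0$) says that column $c$ is the rightmost with nonzero arm excess in $T^{(s)}_j$ and, together with the structural observation above, that when $s \geq 1$ this arm is concentrated at the single cell $(r,c)$. Tracing through the appropriate branch of $\mathcal{C}_b$ from Definition~\ref{def: cb}, the bump removes that arm from $(r,c)$; therefore column $c$ has zero arm afterward, while columns strictly to the right of $c$ are untouched and had zero arm by induction. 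This proves assertion~(a).

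For assertion~(b) I split on whether $c-1 > d(r_{j+1},c_{j+1})$ or $c-1 = d(r_{j+1},c_{j+1})$. In the former case — an intermediate step of path $j+1$ — the structural observation yields that column $c-1$ had zero arm before the bump, so $(r',c-1)$ is trivially the unique, hence topmost, arm cell afterward. The second case, which occurs precisely when $s+1 = \alpha_{j+1}$, is the principal obstacle: column $c-1$ may already contain arm entries deposited by earlier paths $i \leq j$ with $d(r_i,c_i) = d(r_{j+1},c_{j+1})$. Here I invoke Lemma~\ref{lem: path}: every such $i$ satisfies $c_i > c_{j+1}$ and a common destination, so $\mathsf{path}_{j+1}$ lies weakly above $\mathsf{path}_i$, giving $r' \geq r^{(\alpha_i)}_i$; hence $(r',c-1)$ is weakly above every pre-existing arm cell in column $c-1$ and therefore topmost after the bump (either strictly above a pre-existing cell, or coinciding with it and thereby still topmost). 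The lemma then bootstraps the following step, since $(r',c-1)$ carries a single arm element and so meets the precondition of $\mathcal{C}_b$.
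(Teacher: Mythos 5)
Your proof is correct and follows essentially the same route as the paper's: you track that all arm entries deposited by earlier paths sit in columns weakly left of the current destination column (so intermediate columns of the current path carry no arm excess before and after transit), conclude that the moving entry is always in the rightmost column with nonzero arm excess, and settle the destination-column case by invoking Lemma~\ref{lem: path} to place the new arm cell weakly above all previously deposited ones. The explicit lexicographic induction on $(j,s)$ is just a more formal packaging of the paper's direct argument, so no substantive difference remains.
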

	
\begin{proof}
In any $\mathsf{path}_j$, consider the arm excess of its columns. Those with column index $c$ such that $d(r_j,c_j)<c<c_j$ started 
with arm excess $0$, then changed to arm excess $1$ when the insertion path 
passed through that column, and immediately decreased to $0$.

Thus the $q_{j}^{(s)}$ that is being moved to cell $(r',c-1)$ is always at the rightmost column containing nonzero arm excess. 
When $c-1>d(r_j,c_j)$, the arm excess of the column $c-1$ is exactly $1$, $(r',c-1)$ is also the topmost cell containing an arm.
For $c-1=d(r_j,c_j)$, the path $\mathsf{path}_j$ has reached its destination. At that point, any column to the right of $d(r_j,c_j)$ has $0$ 
arm excess. It follows from Lemma~\ref{lem: path} that the cell 
$(r_j^{(\alpha_j)},c_j^{(\alpha_j)})$ is also the topmost cell containing an 
arm.
\end{proof}

\begin{proposition}\label{prop: semistandard}
The tableau $T^{(s+1)}_j$ is a semistandard hook-valued tableau for all 
$0 \leqslant j \leqslant e-1$ and for all $0\leqslant s <\alpha_{j+1}$.
\end{proposition}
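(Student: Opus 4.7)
The plan is to prove the proposition by induction on the total number of crowding bumping steps, i.e., along the sequence $T^{(0)}_0, T^{(1)}_0, \ldots, T^{(\alpha_1)}_0 = T^{(0)}_1, T^{(1)}_1, \ldots$ in its natural order. The base case $T^{(0)}_0 = S$ is a set-valued tableau and hence semistandard as a hook-valued tableau. For the inductive step, assume $h := T^{(s)}_j$ is semistandard and let $h' := p_1 \circ \mathcal{C}_b([h,(r,c)])$; by Lemma~\ref{lem: leftmost-topmost} applied to the previous step (or by the definition of the crowding order when $s=0$), the cell $(r,c)$ is the topmost cell in the rightmost column of $h$ with nonzero arm excess, or else is the corner cell $(r_{j+1},c_{j+1})$ with empty arm in the case $s=0$. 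In either case, the hypotheses of Definition~\ref{def: cb} are satisfied.

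Verification of semistandardness of $h'$ proceeds by case analysis on Step~\ref{alg: move} of Definition~\ref{def: cb}. Only the cells $(r,c)$ and $(r',c-1)$ are altered, so only the internal hook shapes of these two cells and their boundary relations with the neighbors in row $r$, row $r'$, column $c$, and column $c-1$ require checking. Since $\sfH_h(r,c) \in \sfL^+_h(r,c)$ and $\sfH_h(r,c) \leqslant m$, one has $b \geqslant \sfH_h(r,c) \geqslant \sfH_h(r,c-1)$ by row-$r$ semistandardness of $h$, so $r$ already belongs to the set over which Step~\ref{alg: r'} takes its maximum, forcing $r' \geqslant r$. When $r' = r$: row-$r$ semistandardness gives $\max \sfL_h(r,c-1) \leqslant \sfH_h(r,c) = q$, while every element of $\{x \in \sfL_h(r,c) : q < x \leqslant m\}$ strictly exceeds $q$, so inserting that set on top of $\sfL_h(r,c-1)$ preserves strict increase; moreover $q$ dominates every previous entry of $\sfA_h(r,c-1)$. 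In case (a)(i), within $(r,c)$ the new hook $m$ sits strictly below the surviving leg elements, which are exactly those $> m$ by the definition of $b$. Cases where $(r,c)$ is removed (cases (a)(ii) and (b)(ii)) use the corner-cell hypothesis to avoid any shape violation.

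The main obstacle is the case $r' > r$. Here the maximality of $r'$ yields $\sfH_h(r'+1,c-1) > b \geqslant q$ whenever $(r'+1,c-1)$ exists, giving the strict column-$(c-1)$ inequality between $(r',c-1)$ and $(r'+1,c-1)$ after $q$ is appended to $\sfA_h(r',c-1)$; the relation with $(r'-1,c-1)$ is unaffected since that cell is untouched. For the row-$r'$ relation with $(r',c)$, should that cell exist, iterating column-$c$ semistandardness of $h$ up from row $r$ yields $\sfH_h(r',c) > m \geqslant q$, so the appended arm entry $q$ respects $(r',c)$. The internal semistandardness at $(r,c)$ in case (b)(i), where $b$ is replaced by $m$ in $\sfL^+_h(r,c)$, is preserved because by the definition of $b$ every element of $\sfL^+_h(r,c)$ strictly greater than $b$ is also strictly greater than $m$, so the strict increase of the leg is maintained after the substitution; the removal of $m$ from $\sfA_h(r,c)$ leaves the remaining arm (now empty) trivially consistent. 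The remaining boundary checks are analogous, completing the induction.
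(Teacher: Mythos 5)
Your case analysis covers the routine checks correctly, but it misses the one point that is actually delicate and that the paper's proof is built around. In the case $r' > r$ you verify the column relation with $(r'+1,c-1)$, the row relation with $(r',c)$, and the internal semistandardness of the modified cell $(r,c)$ — but you never verify the internal semistandardness of the \emph{target} cell $(r',c-1)$ after $q=b$ is appended to its arm, namely that $q$ is weakly greater than the arm entries already sitting in $\sfA_h(r',c-1)$. (Your sentence ``moreover $q$ dominates every previous entry of $\sfA_h(r,c-1)$'' is justified by row semistandardness only in the subcase $r'=r$, where $q=\sfH_h(r,c)$; when $r'>r$ the existing arm of $(r',c-1)$ lies in a strictly higher row and is not bounded above by anything in cell $(r,c)$, so no local semistandardness argument on $h$ can give $b \geqslant \max \sfA_h(r',c-1)$.) This is exactly the issue flagged in Remark~\ref{remark: loss}: ``in principle, the arm in cell $(r',c-1)$ could be greater than the $q$ that is to be inserted.''

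The reason the statement is nonetheless true is global, not local: the target cell can have a nonempty arm only when $c-1$ is the destination column of the current insertion path, and the entries already there were deposited by earlier paths with the same destination column. One then needs to compare consecutive insertion paths, which is what the proof of Lemma~\ref{lem: path} provides: for $d(r_j,c_j)=d(r_{j+1},c_{j+1})$ one has $q_{j+1}^{(s)} \geqslant q_j^{(s+z_j)}$ (together with Lemma~\ref{lem: leftmost-topmost} to locate where the arm excess can sit), and taking $s=\alpha_{j+1}$ shows the newly appended value weakly dominates the previously deposited one. The paper's proof consists essentially of this single reduction plus the invocation of Lemma~\ref{lem: path}; your ``the remaining boundary checks are analogous'' cannot absorb this step, because it is not a boundary check and it is the unique place where induction on semistandardness of $T^{(s)}_j$ alone fails. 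To repair your argument, strengthen the induction to carry the path-comparison inequality (or cite Lemma~\ref{lem: path} and its proof) and treat the destination-column landing explicitly.
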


\begin{proof}
We only need to check that the $q$ in Step~\ref{alg: r'} of 
Definition~\ref{def: cb} is greater or equal to the hook entry and arm of the 
cell $q$ is to be inserted into.
When $q$ is the only arm element, it is obvious that $q$ is greater or equal to 
the hook entry.

The case when $q$ is not the only arm element can only happen when we reach the 
destination column of the path. 
By the proof of Lemma~\ref{lem: path}, we have that for $q_{j+1}^{(s)}\geqslant q_{j}^{(s+z_j)}$ for $s\geqslant 1$ and for $j$ such 
that $d(r_j,c_j) = d(r_{j+1},c_{j+1})$. Hence the statement follows by setting $k = \alpha_{j+1}$.
\end{proof}

Before we define the ``inverse'' of the uncrowding map $\mathcal{U}: \HVT(\lambda) \to \sqcup_{\mu\supseteq \lambda}\SVT(\mu)\times \hat{\mathcal{F}}(\mu/\lambda)$, we need to restrict our domain to a subset $\mathsf{K}_\lambda$ of 
$\sqcup_{\mu\supseteq \lambda}\SVT(\mu)\times \hat{\mathcal{F}}(\mu/\lambda)$, as the image of $\mathcal{U}$ is not all of $\sqcup_{\mu\supseteq \lambda}\SVT(\mu)\times \hat{\mathcal{F}}(\mu/\lambda)$. We define:
\begin{align*}
\mathsf{K}_\lambda(\mu) :=& \{ (S,F)\in \SVT(\mu) \times \mathcal{\hat{F}}(\mu/\lambda)\,\mid \,\mathsf{weight}(T^{(s)}_j) = \mathsf{weight}(S), \forall \, 0\leqslant j 
\leqslant e-1, \forall \, 0\leqslant s \leqslant \alpha_{j+1}\},\\
\mathsf{K}_\lambda := & \bigsqcup_{\mu\supseteq \lambda}\mathsf{K}_\lambda(\mu).
\end{align*}

\smallskip
\begin{remark}
From the perspective of the uncrowding map, the set-valued tableau $S$ in Example~\ref{social-distancing} cannot be obtained from a 
shape $(1,1)$ hook-valued tableau via the uncrowding map as explained in Remark~\ref{remark: loss}. We say the cell $(1,2)$ in $S$ 
\defn{practices social distancing}. In this case,
\[
	\ytableausetup{notabloids,boxsize=1.6em}
	\left(\,{\def\mc#1{\makecell[lb]{#1}}
	{\begin{array}[lb]{*{2}{|l}|}\cline{1-1}
	\mc{3}\\\cline{1-2}
	\mc{2\\1}&\mc{3\\2}\\\cline{1-2}
	\end{array}}}\;,
	\begin{ytableau}
		*(gray) \\
		*(gray) & 1
	\end{ytableau}\,\right) \notin \mathsf{K}_{(1,1)}.
\]
The $(S,F)$ in Example~\ref{eg: crowding} is in $\mathsf{K}_{(3,2,1)}(5,3,2)$.
\end{remark}

\begin{definition}\label{crowding}
We can now define the \defn{crowding map} $\mathcal{C}$ for any partition $\lambda$ as follows,
\begin{align*}
	\mathcal{C} \colon \mathsf{K}_\lambda &\longrightarrow \HVT(\lambda) \\
		(S,F) & \mapsto T^{(0)}_e.
\end{align*}
\end{definition}

\begin{proposition}
The image of the uncrowding map $\mathcal{U}: \HVT(\lambda) \to \sqcup_{\mu\supseteq \lambda}\SVT(\mu)\times \hat{\mathcal{F}}(\mu/\lambda)$ is a subset of $\mathsf{K}_\lambda$. Moreover, we have 
$\mathcal{C} \circ \mathcal{U} = \mathbf{1}_{\HVT(\lambda)}$.
\end{proposition}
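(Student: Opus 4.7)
The plan is to show that $\mathcal{C}$ reverses $\mathcal{U}$ step-by-step, with each application of $\mathcal{C}_b$ locally inverting a corresponding application of $\mathcal{V}_b$. Fix $T\in\HVT(\lambda)$ with arm excess $\alpha$ and let $(P(T),Q(T))=\mathcal{U}(T)$ with intermediate uncrowding tableaux $P_0=T,P_1,\dots,P_\alpha=P(T)$ and insertion paths $p^0,p^1,\dots,p^{\alpha-1}$, where $p^i=((r_0^i,c_0^i),\dots,(r_{d_i}^i,c_{d_i}^i))$ is the insertion path of $P_i\to P_{i+1}=\mathcal{V}(P_i)$. By construction, the cell added to $Q$ at step $i$ is $(r_{d_i}^i,c_{d_i}^i)$, carrying entry $d_i$, so its destination column in the sense of Section~\ref{section.crowding} is exactly $c_0^i$ — the starting column of the $i$-th uncrowding insertion.

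The first task is to verify that the crowding order on $Q(T)$ is precisely the reverse of the cell creation order during uncrowding. From the proof of Lemma~\ref{well_defined_q}, the starting columns satisfy $c_0^{i+1}\leqslant c_0^{i}$, so arranging the recorded cells with destination columns smallest-first corresponds to reversing the outer index $i$. For cells with the same destination column, the same lemma gives $r_j^{i+1}\leqslant r_j^i$ termwise; in particular $c_{d_i}^{i}\leqslant c_{d_{i+1}}^{i+1}$ need not hold in general, but one checks that the tie-breaking ``largest column index first'' within a destination column is consistent with reversing the uncrowding order by induction on path length, using that paths starting at the same column are nested.

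Next, the core technical step: I would prove a local inverse lemma of the form ``if $\mathcal{V}_b$ applied to $T$ produces $T'$ with insertion step $(r,c)\to(\tilde r,c+1)$, then $\mathcal{C}_b([T',(\tilde r,c+1)])=[T,(r,c)]$'', and additionally that $(\tilde r, c+1)$ is a cell satisfying the hypotheses of $\mathcal{C}_b$ inside $T'$ (which is exactly the content of Lemma~\ref{lem: leftmost-topmost} read in reverse). The proof is a case analysis matching the cases in Definitions~\ref{def: uncrowd_bump} and~\ref{def: cb}: the $\tilde r\neq r$ branch of $\mathcal{V}_b$ (inserting $a$ into a cell above, pushing $k$ to its arm) is reversed by the $r'\neq r$ branch of $\mathcal{C}_b$ (identifying $m=a$, $b=k$, and recovering $q=b$), while the $\tilde r=r$ branch of $\mathcal{V}_b$ (which moves the slice $(a,\ell]\cap\sfL_T(r,c)$ across to the new column together with $a$ and $k$) is reversed by the $r'=r$ branch of $\mathcal{C}_b$ via Step~\eqref{alg:3-a-i}, where the chosen $b=\max\{x\in\sfL^+\mid x\leqslant m\}$ recovers exactly the original hook entry of $(\tilde r,c+1)$ in $T$. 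Iterating the local inverse along the path $p^i$ from right to left shows that the chain of $\mathcal{C}_b$ operations initiated at $(r_{d_i}^i,c_{d_i}^i)$ transforms $P_{i+1}$ back into $P_i$; iterating over $i$ from $\alpha-1$ down to $0$ then yields $\mathcal{C}(P(T),Q(T))=T$.

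Finally, membership in $\mathsf{K}_\lambda$ follows because each intermediate tableau $T_j^{(s)}$ arising in the computation of $\mathcal{C}(P(T),Q(T))$ coincides with some intermediate uncrowding tableau $\mathcal{V}_b^{d}(P_i)$; since every $\mathcal{V}_b$ step preserves weight (entries are only relocated, never deleted or created), so does every $\mathcal{C}_b$ step encountered along the way, so $\mathsf{weight}(T_j^{(s)})=\mathsf{weight}(P(T))$, as required. The main obstacle will be the case-by-case verification that $\mathcal{C}_b$ picks out the correct value of $q$ and the correct row $r'$ to exactly reverse $\mathcal{V}_b$; this is delicate precisely in the $\tilde r=r$ subcases where the legs of neighboring cells interact, and it is here that Lemma~\ref{lem: path} and Proposition~\ref{prop: semistandard} are used to guarantee that the inductive hypothesis on which cell is the ``rightmost topmost'' carries through.
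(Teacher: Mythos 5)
Your proposal follows essentially the same route as the paper's proof: the heart is the local inverse statement that $\mathcal{C}_b$, applied at the cell $(\tilde r,c+1)$ that received the bump, exactly undoes $\mathcal{V}_b$, established by the same case analysis matching the branches of Definitions~\ref{def: uncrowd_bump} and~\ref{def: cb}, and then iterated along each insertion path and over all insertions to give $\mathcal{C}\circ\mathcal{U}=\mathbf{1}_{\HVT(\lambda)}$, with membership in $\mathsf{K}_\lambda$ deduced from weight preservation and semistandardness of the intermediate tableaux. Two small remarks: your parenthetical identifications are swapped (in the $\tilde r\neq r$ case one has $m=k$ and $b=a$, so $q=a$ is what returns to the arm of $(r,c)$, and in the $\tilde r=r$ case it is $m=k$, not $b$, that restores the original hook entry of $(r,c+1)$), and your explicit verification that the crowding order on $Q(T)$ reverses the order in which cells are created during uncrowding is a point the paper leaves implicit, so including it is a welcome addition rather than a deviation.
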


\begin{proof}
 We show that if $\tilde{h} = \mathcal{V}_b(h)$, where $h\in \HVT$, $\mathcal{V}_b$ is as defined in Definition~\ref{def: uncrowd_bump} 
 and $\tilde{h}$ is obtained by moving some letter(s) from the cell $(r,c)$ to $(\tilde{r},c+1)$ (potentially adding a box), then 
 $\mathcal{C}_b([\tilde{h},(\tilde{r},c+1)])=[h',(r',c)]$ satisfies $[h',(r',c)]=[h,(r,c)]$.

We follow the notation used in Definitions~\ref{def: uncrowd_bump} and~\ref{def: cb}. Thus $a = \max(\sfA_h(r,c))$. We have 
that $\sfH_h(\tilde{r},c)\leqslant a$. If cell $(r+1,c)$ is in $h$, then $\sfH_h(r+1,c)>a$.

\smallskip \noindent
\textbf{Case (1):} $\tilde{r} \neq r$.

\smallskip\noindent
\textbf{Case (1A):} If cell $(\tilde{r},c+1)$ is not in $h$, then $h'$ is obtained by adding cell $(\tilde{r},c+1)$ and moving $a$ from 
$\sfA_h(r,c)$ to $\sfH_h(\tilde{r},c+1)$. 
Under the action of $\mathcal{C}_b$, by 
Step~\ref{alg: m,b}, $b=a$ and $r'=r$. $\mathcal{C}_b$ appends $a$ to 
$\sfA_{\tilde{h}}(r,c)$ and removes cell $(\tilde{r},c+1)$, which recovers $h$.
\begin{figure}[h!]
	\begin{minipage}{0.45\textwidth}
		\[
			{\def\mc#1{\makecell[lb]{#1}}
			{\begin{array}[lb]{*{2}{|l}|}\cline{1-1}
			\mc{-\\--a}\\\cline{1-1}
			\mc{- \\-}\\\cline{1-2}
			\mc{- \\ - -}&\mc{-\\-}\\\cline{1-2}
			\end{array}} \xrightarrow[]{\mathcal{V}_b}{\begin{array}[lb]{*{2}{|l}|}\cline{1-1}
				\mc{-\\--}\\\cline{1-2}
				\mc{- \\-}&\mc{a}\\\cline{1-2}
				\mc{- \\ - -}&\mc{-\\-}\\\cline{1-2}
				\end{array}}}
		\]
	\end{minipage}
	\begin{minipage}{0.45\textwidth}
		\[
			{\def\mc#1{\makecell[lb]{#1}}
			{\begin{array}[lb]{*{2}{|l}|}\cline{1-1}
			\mc{-\\--a}\\\cline{1-2}
			\mc{- \\- -}&\mc{-\\k\\-}\\\cline{1-2}
			\end{array}} \xrightarrow[]{\mathcal{V}_b}{\begin{array}[lb]{*{2}{|l}|}\cline{1-1}
				\mc{-\\--}\\\cline{1-2}
				\mc{- \\- -}&\mc{-\\a\\-\,k}\\\cline{1-2}
				\end{array}}}
		\]
	\end{minipage}
	\caption{Left: case (1A): $(\tilde{r},c+1)$ is not in $h$.\quad Right: case (1B): $(\tilde{r},c+1)$ is in $h$.}
\end{figure}

\smallskip\noindent
\textbf{Case (1B):} If cell $(\tilde{r},c+1)$ is in $h$, then $k\in \sfL^{+}_h(\tilde{r},c+1)$ is the smallest number that is greater than or 
equal to $a$ in column $c+1$. $h'$ is obtained by removing $a$ from $\sfA_h(r,c)$, replacing $k$ with $a$, and attaching $k$ to 
$\sfA_h(\tilde{r},c+1)$. 
Under the action of $\mathcal{C}_b$, by Step~\ref{alg: m,b}, we can see that 
$m=k$, $b=a$ and $r'=r$. By Step~\ref{alg:3-b-i}, 
$q=b=a$, and $a$ is appended to $\sfA_{\tilde{h}}(r,c)$ and $q=a$ in $\sfL_{\tilde{h}}(\tilde{r}, c+1)$ is replaced with $m=k$. In the 
end, $m$ is removed from $\sfA_{\tilde{h}}(\tilde{r}, c+1)$. We recover $h$.

\smallskip \noindent
\textbf{Case (2):} $\tilde{r} = r$. Let $\ell = \max (\sfL^+_h(r,c))$.

\smallskip\noindent
\textbf{Case (2A):} If cell $(r,c+1)$ is not in $h$, $\mathcal{V}_b$ adds cell 
$(r,c+1)$, removes the part of $\sfL_h(r,c)$ that is greater than $a$ to 
$\sfL_h(r,c+1)$ and moves $a$ from $\sfA_h(r,c)$ to $\sfH_h(r,c+1)$. 
Under the action of $\mathcal{C}_b$, by Step~\ref{alg: m,b}, $m=a$ and 
$b=\ell$. Thus $r'=r$. By Step~\ref{alg:3-a-ii}, we move $\sfL_{\tilde{h}}(r, 
c+1)$ into $\sfL_{\tilde{h}}(r,c)$ and we recover $h$.
\begin{figure}[h!]
	\begin{minipage}{0.45\textwidth}
	\[
		{\def\mc#1{\makecell[lb]{#1}}
		{\begin{array}[lb]{*{2}{|l}|}\cline{1-1}
		\mc{\ell\\\ast \\-\\--a}\\\cline{1-1}
		\end{array}} \xrightarrow[]{\mathcal{V}_b}{\begin{array}[lb]{*{2}{|l}|}\cline{1-2}
			\mc{- \\--}&\mc{\ell\\\ast\\a}\\\cline{1-2}
			\end{array}}}
	\]
	\end{minipage}
	\begin{minipage}{0.45\textwidth}
		\[
			{\def\mc#1{\makecell[lb]{#1}}
			{\begin{array}[lb]{*{2}{|l}|}\cline{1-2}
			\mc{\ell\\\ast \\-\\--a}&\mc{-\\-\\k}\\\cline{1-2}
			\end{array}} \xrightarrow[]{\mathcal{V}_b}{\begin{array}[lb]{*{2}{|l}|}\cline{1-2}
				\mc{- \\--}&\mc{-\\-\\\ell\\\ast\\a \,k}\\\cline{1-2}
				\end{array}}}
		\]
	\end{minipage}
	\caption{Left: Case (1A): $(r,c+1)$ is not in $h$.\quad Right: Case (1B): $(r,c+1)$ is in $h$.}
\end{figure}

\smallskip\noindent
\textbf{Case (2B):} If cell $(r,c+1)$ is in $h$, $\tilde{h}$ is obtained by 
moving the part of $\sfL_h(r,c)$ that is greater than $a$ to $\sfL_h(r,c+1)$, 
moving $a$ from $\sfA_h(r,c)$ to $\sfH_h(r,c+1)$, and appending $k$ to 
$\sfA_h(r,c+1)$. 
Under the action of $\mathcal{C}_b$, by Step~\ref{alg: m,b}, $m=k$ and 
$b=\ell$. Then $r'=r$ and $q=a$. By Step~\ref{alg:3-a-i}, we move the set 
$\{x\in 
\sfL_{\tilde{h}}(r,c) \mid a<x\leqslant k\}$ from $\sfL_{\tilde{h}}(r,c+1)$ 
into $\sfL_{\tilde{h}}(r,c)$, which is the set that was moved from cell $(r,c)$ 
by $\mathcal{V}_b$. Removing $k$ from $\sfA_{\tilde{h}}(r,c+1)$ and setting 
$\sfH_{\tilde{h}}(r,c+1)=k$, we recover $h$.

Now we have proven $\mathcal{C}_b([\tilde{h},(\tilde{r},c+1)])=[h',(r',c)]=[h,(r,c)]$. It follows that for any $(S,F) = \mathcal{U}(h)$, we 
have that $T^{(s)}_j$ is semistandard and has the same weight as $S$ for all 
$0 \leqslant j \leqslant e-1$, for all $0\leqslant s \leqslant \alpha_{j+1}$. Thus 
$\mathsf{image}(\mathcal{U})\subset \mathsf{K}_\lambda$ and $\mathcal{C} \circ \mathcal{U} = \mathbf{1}_{\HVT(\lambda)}$.
\end{proof}

\begin{proposition}
$\mathsf{K}_\lambda$ is a subset of the image of 
$\mathcal{U}: \HVT(\lambda) \to \sqcup_{\mu\supseteq \lambda}\SVT(\mu)\times \hat{\mathcal{F}}(\mu/\lambda)$.
Moreover, $\mathcal{U}\circ \mathcal{C} = \mathbf{1}_{\mathsf{K}_\lambda}$.
\end{proposition}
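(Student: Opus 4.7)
The two assertions both follow once we establish $\mathcal{U}\circ \mathcal{C} = \mathbf{1}_{\mathsf{K}_\lambda}$, since then every $(S,F)\in\mathsf{K}_\lambda$ equals $\mathcal{U}(\mathcal{C}(S,F))$ and hence lies in the image of $\mathcal{U}$. The overall plan mirrors the preceding proposition but runs in the opposite direction: first prove a step-by-step local inversion saying that $\mathcal{V}_b$ undoes $\mathcal{C}_b$ on admissible inputs, then iterate, and finally track the recording tableau.

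For the local step, I would show that if $(\tilde r, c+1)$ is the topmost cell in the rightmost column of $\tilde h \in \HVT$ with nonzero arm excess and $\mathcal{C}_b\bigl([\tilde h,(\tilde r, c+1)]\bigr) = [h,(r,c)]$ with $h$ semistandard and $\mathsf{weight}(h)=\mathsf{weight}(\tilde h)$, then $\mathcal{V}_b(h)=\tilde h$ with insertion path passing from $(r,c)$ to $(\tilde r, c+1)$. This is a direct case analysis against the four configurations in Figures~\ref{fig: cb-a} and~\ref{fig: cb-b}, verified by checking that the parameters $m, b, q, r'$ produced by $\mathcal{C}_b$ become the parameters $a, k, \ell, \tilde r$ needed by $\mathcal{V}_b$ so that the two operations are mutually inverse. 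The weight-preservation hypothesis is used crucially here: it rules out exactly the pathological configuration of Remark~\ref{remark: loss} in which $\sfA_h(r,c)$ is empty but $\sfL_h(r,c)$ is nonempty, which is the only scenario that $\mathcal{V}_b$ cannot reproduce.

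Next I would iterate. Fix $(S,F)\in\mathsf{K}_\lambda$ with crowding order $(r_1,c_1),\ldots,(r_e,c_e)$ and let $T_j^{(s)}$ denote the intermediate tableaux from the definition of $\mathcal{C}$. By Lemma~\ref{lem: leftmost-topmost}, the cell $(r_j^{(s+1)}, c_j^{(s+1)})$ is the topmost cell in the rightmost column of $T_j^{(s+1)}$ with nonzero arm excess, so $\mathcal{V}_b$ automatically selects it, and the local inversion yields $\mathcal{V}_b(T_j^{(s+1)})=T_j^{(s)}$. Hence one call of $\mathcal{V}$ on $T_j^{(0)}$ reverses all $\alpha_j$ steps of $\mathsf{path}_j$ and appends a new box at $(r_j,c_j)$. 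The order in which $\mathcal{U}$ processes the paths is automatically the reverse of the crowding order: $\mathcal{U}$ always uncrowds from the rightmost column, so larger destination columns are handled first, and Lemma~\ref{lem: path} ensures that within a common destination column the topmost surviving cell corresponds to the most recently crowded path. Consequently $P(\mathcal{C}(S,F)) = T_0^{(0)} = S$.

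To identify $Q(\mathcal{C}(S,F))$ with $F$, note that when $\mathcal{V}$ reverses $\mathsf{path}_j$, the rightmost column of $T_j^{(0)}$ with nonzero arm excess is $c=d(r_j,c_j)$, while the newly appended cell lies in column $\tilde c = c_j$; by Definition~\ref{def: uncrowd_map} the entry recorded at position $(r_j,c_j)$ is therefore $\tilde c - c = c_j - d(r_j,c_j) = F(r_j,c_j)$, as required. The main obstacle will be the bookkeeping in the local inversion step, in particular matching the leg-movement substeps~\ref{alg:3-a-i}--\ref{alg:3-a-ii} of $\mathcal{C}_b$ against the corresponding hook/arm rearrangements in $\mathcal{V}_b$, and isolating the places where the weight-preservation hypothesis must be invoked to prevent leg entries from being silently dropped.
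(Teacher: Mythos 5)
Your proposal is correct and follows essentially the same route as the paper's proof: a case analysis showing that $\mathcal{V}_b$ inverts each $\mathcal{C}_b$ step (with the weight-preservation condition in $\mathsf{K}_\lambda$ ruling out precisely the letter-dropping configuration of Remark~\ref{remark: loss}), Lemma~\ref{lem: leftmost-topmost} guaranteeing that $\mathcal{V}_b$ selects the cell just produced by $\mathcal{C}_b$, and then iteration plus the column-difference computation to recover $S$ and $F$. The only small imprecision is that your local inversion lemma places the ``topmost cell in the rightmost column with nonzero arm excess'' hypothesis on the input $\tilde h$ of $\mathcal{C}_b$, whereas what is actually needed (and what Lemma~\ref{lem: leftmost-topmost} provides, as you use correctly in the iteration) is that condition on the output tableau, so that $\mathcal{V}_b$ acts at the deposited cell.
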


\begin{proof}
Let $(S,F) \in \mathsf{K}_\lambda$, then for all $0\leqslant j < e$ and for all $0 \leqslant s <\alpha_{j+1}$, 
$\mathcal{C}_b([\,T^{(s)}_j,(r,c)]) = 
[\,T^{(s+1)}_j,(r',c-1)]$ for some $r,c,r'$.
We show that $\mathcal{V}_b(T^{(s+1)}_j) = T^{(s)}_j$ for all $0\leqslant j <e$ and for all $0 \leqslant s <\alpha_{j+1}$. 
Following the notation in 
Definition~\ref{def: uncrowd_bump}, we first locate the rightmost column that contains nonzero arm excess, then determine 
the topmost cell in row $\tilde{r}$ in that column with nonzero arm excess. We denote by $a$ the largest arm element in that cell.

By Lemma~\ref{lem: leftmost-topmost}, in $T^{(s+1)}_j$, 
column $c-1$ is the rightmost column with nonzero arm excess and 
$(r',c-1)$ is the topmost cell in column $c-1$ with nonzero arm excess.

\smallskip \noindent
\textbf{Case (1):} $r'= r$. In this case either cell $(r+1,c-1)$ does not exist in $T^{(s)}_j$, or $\sfH_{T^{(s)}_j}(r+1,c-1)>b$.

\smallskip \noindent
\textbf{Case (1A):} $\sfA_{T^{(s)}_j}(r,c) = \emptyset$. 
$m=\sfH_{T^{(s)}_j}(r,c)$ 
and $b = \max(\sfL^+_{T^{(s)}_j}(r,c))$. 
Since $r'=r$, $q=m$, 
$T^{(s+1)}_j$ is obtained by appending $m$ to $\sfA_{T^{(s)}_j}(r,c-1)$, moving 
$\sfL_{T^{(s)}_j}(r,c)$ into $\sfL_{T^{(s)}_j}(r,c-1)$, and 
removing cell $(r,c)$ from $T^{(s)}_j$. Note that everything in 
$\sfL_{T^{(s)}_j}(r,c)$ is greater than $m$ and everything in 
$\sfL_{T^{(s)}_j}(r,c-1)$ 
is smaller or equal to $m$.

For the $\mathcal{V}_b$ action, we have $a = m$ and $b$ is the greatest letter 
in $\sfL_{T^{(s+1)}_j}(r,c-1)$. Since every letter in 
$T^{(s+1)}_j(r'',c)$ is smaller than $m$ for $r''<r$, we have 
$\tilde{r}=r$. 
$\mathcal{V}_b$ acts on $T^{(s+1)}_j$ by adding the cell $(r,c)$, 
setting the hook entry to be $m$, and moving $(m,b]\cap 
\sfL_{T^{(s+1)}_j}(r,c-1)$ to $\sfL_{T^{(s+1)}_j}(r,c)$. Then we recover $T^{(s)}_j$.
\begin{figure}[h!]
	\begin{minipage}{0.45\textwidth}
		\[
			{\def\mc#1{\makecell[lb]{#1}}
			{{\begin{array}[lb]{*{2}{|l}|}\cline{1-2}
				\mc{- \\--}&\mc{b\\\ast\\m}\\\cline{1-2}
				\end{array}}
				 \xrightarrow[]{\mathcal{C}_b}\begin{array}[lb]{*{2}{|l}|}\cline{1-1}
					\mc{b\\\ast \\-\\--m}\\\cline{1-1}
					\end{array}}
				}
		\]
	\end{minipage}
	\begin{minipage}{0.45\textwidth}
		\[
			{\def\mc#1{\makecell[lb]{#1}}
			{\begin{array}[lb]{*{2}{|l}|}\cline{1-2}
				\mc{- \\--}&\mc{-\\b\\\ast\\q \,m}\\\cline{1-2}
				\end{array}} \xrightarrow[]{\mathcal{C}_b}{\begin{array}[lb]{*{2}{|l}|}\cline{1-2}
				\mc{b\\\ast \\-\\--q}&\mc{-\\m}\\\cline{1-2}
				\end{array}}
			 }
		\]
	\end{minipage}
	\caption{Left: Case (1A): $\sfA_{T^{(s)}_j}(r,c) = \emptyset$.\quad Right: 
	Case (1B): $\sfA_{T^{(s)}_j}(r,c) \neq \emptyset$.}
\end{figure}

\smallskip \noindent
\textbf{Case (1B):} $\sfA_{T^{(s)}_j}(r,c) \neq \emptyset$. $m$ is the only 
element in $\sfA_{T^{(s)}_j}(r,c)$, $q=\sfH_{T^{(s)}_j}(r,c)$ and 
$b = \max \{x\in \sfL^+_{T^{(s)}_j}\mid x\leqslant m\}$. $T^{(s+1)}_j$ is obtained 
by appending $q$ to $\sfA_{T^{(s)}_j}(r,c-1)$, setting 
$\sfH_{T^{(s)}_j}(r,c)$ to be $m$, deleting $\sfA_{T^{(s)}_j}$, and moving $\{x\in 
\sfL_{T^{(s)}_j(r,c)} \mid q<x\leqslant m\}$ to $\sfL_{T^{(s)}_j}(r,c-1)$.

For the $\mathcal{V}_b$ action, $a = q$ and $b$ is the greatest letter in 
$\sfL_{T^{(s+1)}_j}(r,c-1)$. Since every letter in $T^{(s+1)}_j(r'',c)$ 
is smaller than $q$ for $r''<r$ and $m\geqslant q$, $\tilde{r}=r$. 
$\mathcal{V}_b$ acts on $T^{(s+1)}_j$ by setting $\sfH_{T^{(s+1)}_j}(r,c)=q$, 
$\sfA_{T^{(s+1)}_j}(r,c)=m$, and moving $(q,b]\cap \sfL_{T^{(s+1)}_j}(r,c-1)$ 
to $\sfL_{T^{(s+1)}_j}(r,c)$. We recover $T^{(s)}_j$.

\smallskip \noindent
\textbf{Case (2):} $r' \neq r$.

\smallskip \noindent
\textbf{Case (2A):} $\sfA_{T^{(s)}_j}(r,c) = \emptyset$. Note that in this 
case, $\mathcal{C}_b$ will move $m$ somewhere else and remove 
the cell $(r,c)$. Since $\mathsf{weight}(T^{(s+1)}_j) = 
\mathsf{weight}(T^{(s)}_j)$, we must have that $\sfL_{T^{(s)}_j}(r, c) = 
\emptyset$. So
$b = q = m$. $T^{(s+1)}_j$ is obtained from $T^{(s)}_j$ by appending $m$ to 
$\sfA_{T^{(s)}_j}(r',c-1)$ and removing the cell $(r,c)$.

For the $\mathcal{V}_b$ action, $a=m$. Since every letter in 
$T^{(s+1)}_j(r'',c)$ is smaller than $m$ for $r''<r$, a new cell $(r,c)$ is added, 
$\tilde{r}=r$. $\mathcal{V}_b$ acts on $T^{(s+1)}_j$ by moving $m$ to 
$\sfH_{T^{(s+1)}_j}(r, c)$. We recover $T^{(s)}_j$.

\begin{figure}[h!]
	\begin{minipage}{0.45\textwidth}
		\[
			{\def\mc#1{\makecell[lb]{#1}}
			{\begin{array}[lb]{*{2}{|l}|}\cline{1-1}
				\mc{-\\--}\\\cline{1-2}
				\mc{- \\-}&\mc{m}\\\cline{1-2}
				\mc{- \\ - -}&\mc{-\\-}\\\cline{1-2}
				\end{array}} \xrightarrow[]{\mathcal{C}_b}{\begin{array}[lb]{*{2}{|l}|}\cline{1-1}
				\mc{-\\--m}\\\cline{1-1}
				\mc{- \\-}\\\cline{1-2}
				\mc{- \\ - -}&\mc{-\\-}\\\cline{1-2}
				\end{array}}}
		\]
	\end{minipage}
	\begin{minipage}{0.45\textwidth}
		\[
			{\def\mc#1{\makecell[lb]{#1}}
			{\begin{array}[lb]{*{2}{|l}|}\cline{1-1}
				\mc{-\\--}\\\cline{1-2}
				\mc{- \\- -}&\mc{-\\b\\-\,m}\\\cline{1-2}
				\end{array}} \xrightarrow[]{\mathcal{C}_b}{\begin{array}[lb]{*{2}{|l}|}\cline{1-1}
				\mc{-\\--b}\\\cline{1-2}
				\mc{- \\- -}&\mc{-\\m\\-}\\\cline{1-2}
				\end{array}}}
		\]
	\end{minipage}
	\caption{Left: case (2A): $\sfA_{T^{(s)}_j}(r,c) = \emptyset$.\quad Right: 
	case (2B): $\sfA_{T^{(s)}_j}(r,c) \neq \emptyset$.}
\end{figure}

\smallskip \noindent
\textbf{Case (2B):} $\sfA_{T^{(s)}_j}(r,c) \neq \emptyset$. $m$ is the only 
element in $\sfA_{T^{(s)}_j}(r,c)$, 
$q = b = \max\{x\in \sfL^+_{T^{(s)}_j}(r,c) \mid x\leqslant m\}$. $T^{(s+1)}_j$ 
is obtained by appending $b$ to $\sfA_{T^{(s)}_j}(r',c-1)$, 
replacing $b$ in $\sfL_{T^{(s)}_j}(r,c)$ with $m$, and removing $m$ from 
$\sfA_{T^{(s)}_j}(r,c)$.

For the $\mathcal{V}_b$ action, $a = b$. 
Since every letter in $T^{(s+1)}_j(r'',c)$ is smaller than $b$ for $r''<r$, $m$ 
is the smallest letter 
that is greater or equal to $b$ in column $c$. Hence $\tilde{r}=r$.
$\mathcal{V}_b$ acts on $T^{(s+1)}_j$ by removing $b$ from 
$\sfA_{T^{(s+1)}_j}(r',c-1)$, replacing $m$ in $\sfL_{T^{(s+1)}_j}(r,c)$ with 
$b$, and attaching $m$ to $\sfA_{T^{(s+1)}_j}(r,c)$. We recover $T^{(s)}_j$.

\smallskip

Therefore we have $\mathcal{V}_b(T^{(s+1)}_j) = T^{(s)}_j$ for all $0\leqslant 
j \leqslant e-1$, for all $0\leqslant s < \alpha_j$, and 
$\mathcal{V}(T^{(0)}_{j+1}) = T^{(0)}_j$. It follows that we also recover the 
recording tableau $F$. Thus $\mathcal{U}(T^{(0)}_e) = (S,F)$.
\end{proof}

\begin{corollary}\label{corollary.main}
	The uncrowding map $\mathcal{U}$ is a bijection between $\HVT(\lambda)$ and $\mathsf{K}_\lambda$ with inverse $\mathcal{C}$.
\end{corollary}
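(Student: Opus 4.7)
The plan is to observe that this corollary is essentially an immediate consequence of the two propositions established just before it, so the work is in assembling their statements rather than in any new argument.

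First I would note that the previous proposition shows both that $\mathsf{image}(\mathcal{U}) \subseteq \mathsf{K}_\lambda$ and that $\mathcal{C} \circ \mathcal{U} = \mathbf{1}_{\HVT(\lambda)}$. The first of these guarantees that the codomain of $\mathcal{U}$ can indeed be taken to be $\mathsf{K}_\lambda$ (so that one may legitimately consider $\mathcal{U}$ as a map $\HVT(\lambda) \to \mathsf{K}_\lambda$), and the second immediately implies that $\mathcal{U}$ is injective: if $\mathcal{U}(T_1) = \mathcal{U}(T_2)$, then applying $\mathcal{C}$ gives $T_1 = T_2$.

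Next I would invoke the final proposition, which shows that $\mathsf{K}_\lambda \subseteq \mathsf{image}(\mathcal{U})$ and that $\mathcal{U} \circ \mathcal{C} = \mathbf{1}_{\mathsf{K}_\lambda}$. The first half gives surjectivity of $\mathcal{U}$ onto $\mathsf{K}_\lambda$, while together with the first composition identity, the two equations $\mathcal{C} \circ \mathcal{U} = \mathbf{1}_{\HVT(\lambda)}$ and $\mathcal{U} \circ \mathcal{C} = \mathbf{1}_{\mathsf{K}_\lambda}$ are precisely the statement that $\mathcal{U}$ and $\mathcal{C}$ are mutually inverse bijections between $\HVT(\lambda)$ and $\mathsf{K}_\lambda$.

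Since the two key propositions already carry all the content, there is no genuine obstacle in this corollary: the proof is a one-line citation. I would write it as: ``This follows immediately by combining the previous two propositions, which together establish $\mathcal{C} \circ \mathcal{U} = \mathbf{1}_{\HVT(\lambda)}$ and $\mathcal{U} \circ \mathcal{C} = \mathbf{1}_{\mathsf{K}_\lambda}$.''
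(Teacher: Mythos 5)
Your proposal is correct and matches the paper's approach: the corollary is stated without separate proof precisely because the two preceding propositions supply $\mathcal{C}\circ\mathcal{U}=\mathbf{1}_{\HVT(\lambda)}$ and $\mathcal{U}\circ\mathcal{C}=\mathbf{1}_{\mathsf{K}_\lambda}$, which together are exactly the assertion that $\mathcal{U}$ and $\mathcal{C}$ are mutually inverse bijections. Your one-line citation is all that is needed.
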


\subsection{Alternative uncrowding on hook-valued tableaux}
\label{section.alternative uncrowding}

In Section \ref{section.uncrowding HVT}, we defined an uncrowding map sending hook-valued tableaux to pairs of tableaux with one being 
set-valued and the other being column-flagged increasing. As hook-valued tableaux were introduced as a generalization of both set-valued 
tableaux and multiset-valued tableaux, it is natural to ask if there is an uncrowding map taking hook-valued tableaux to pairs of tableaux 
with one being multiset-valued. In this section we provide such a map.

\begin{definition} \label{def: alt_uncrowd_bump}
The \defn{multiset uncrowding bumping} $\tilde{\mathcal{V}}_{b} \colon \HVT \rightarrow \HVT$ is defined by the following algorithm:
\renewcommand\labelenumi{(\arabic{enumi})}
\renewcommand\theenumi\labelenumi
\begin{enumerate}
    \item \label{def: alt_uncrowd_bump_1} Initialize $T$ as the input.
    \item \label{def: alt_uncrowd_bump_2} If the leg excess of T equals zero, return T.
    \item \label{def: alt_uncrowd_bump_3} Find the topmost row that contains a cell with nonzero leg 
    excess. Within this row, find the cell with the largest value in its 
    leg. (This is the rightmost cell with nonzero leg 
    excess in the specified row.) Denote the row index and column index of 
    this cell by $r$ and $c$, respectively. Denote the cell as $(r,c)$,  its 
    largest leg entry by $\ell$, and its rightmost arm entry by $a$.
    \item \label{def: alt_uncrowd_bump_4} Look at the row above $(r,c)$ (i.e. row $r+1$) and find the leftmost
    number that is strictly greater than $\ell$. 
        \begin{itemize}
            \item If no such number exists, attach an empty cell to the end of row $r+1$ and label the cell as $(r+1, \tilde{c})$,
             where $\tilde{c}$ is its column index. Let $k$ be the empty character.
            \item If such a number exists, label the value as $k$ and the cell containing $k$ as  $(r+1,\tilde{c})$ where $\tilde{c}$ is the 
            cell's column index.
        \end{itemize}
    We now break into cases:
    \begin{enumerate}
        \item \label{def: alt_uncrowd_bump_a} If $\tilde{c} \not = c$, then remove 
        $\ell$ from $\sfL_{T}(r, c)$, replace $k$ with $\ell$, and attach $k$ to the 
        leg of $\sfL_{T}(r+1,\tilde{c})$.
        \item \label{def: alt_uncrowd_bump_b} If $\tilde{c} = c$ then remove $[\ell, 
        a] \cap \sfA_{T}(r, c)$ from $\sfA_{T}(r,c)$ where $[\ell, a] \cap \sfA_{T}(r, c)$ is the multiset 
        $\{z \in \sfA_{T}(r, c) \mid \ell \leqslant z \leqslant a\}$. Remove $\ell$ from 
        $\sfL_{T}(r,c)$, insert $[\ell, a] \cap \sfA_{T}(r, c)$ into $\sfA_{T}(r+1,\tilde{c})$, replace the hook entry 
        of $(r+1, \tilde{c})$ with $\ell$, and attach $k$ to $\sfL_{T}(r+1, \tilde{c})$.
    \end{enumerate}
    \item \label{def: alt_uncrowd_bump_5} Output the resulting tableau.
\end{enumerate}
\end{definition}

\begin{definition} \label{def: alt_uncrowd_insert}
The \defn{multiset uncrowding insertion} $\tilde{\mathcal{V}} \colon \HVT \rightarrow \HVT$ is defined as
$\tilde{\mathcal{V}}(T) = \tilde{\mathcal{V}}_b^d(T)$, where the integer $d\geqslant 1$ is minimal such that 
$\shape(\tilde{\mathcal{V}}_b^d(T))/\shape(\tilde{\mathcal{V}}_b^{d-1}(T)) \neq \emptyset$ or 
$\tilde{\mathcal{V}}_b^d(T) = \tilde{\mathcal{V}}_b^{d-1}(T)$.
\end{definition}

\begin{definition} \label{def: alt_uncrowd_map}
Let $T \in \HVT(\lambda)$ with leg excess $\alpha$. The \defn{multiset uncrowding map}
\[
	\tilde{\mathcal{U}} \colon \HVT(\lambda) \rightarrow \bigsqcup_{\mu\supseteq \lambda}\MVT(\mu) \times \mathcal{F}(\mu/\lambda)
\]
 is defined by the following algorithm:
\renewcommand\labelenumi{(\arabic{enumi})}
\renewcommand\theenumi\labelenumi
\begin{enumerate}
	\item Let $\tilde{P}_{0}=T$ and let $\tilde{Q}_{0}$ be the flagged increasing tableau of shape 
	$\lambda/\lambda$.
	\item
	For $1 \leqslant i \leqslant \alpha$, let $\tilde{P}_{i+1} = \tilde{\mathcal{V}}(\tilde{P}_{i})$. Let $r$ be the index of the topmost 
	row of $\tilde{P}_{i}$ containing a cell with nonzero leg excess and let $\tilde{r}$ be the row index of the cell
	$\shape(\tilde{P}_{i+1})/\shape(\tilde{P}_i)$. Then $\tilde{Q}_{i+1}$ is obtained from $\tilde{Q}_{i}$ by appending the cell 
	$\shape(\tilde{P}_{i+1})/\shape(\tilde{P}_i)$ to $\tilde{Q}_{i}$ and filling this cell with $\tilde{r}- r$.
\end{enumerate}
Define $\tilde{\mathcal{U}}(T) = (\tilde{P}(T), \tilde{Q}(T)) := (\tilde{P}_{\alpha}, \tilde{Q}_{\alpha})$.
\end{definition}

\begin{example}
	Let $T$ be the hook-valued tableau
	\[
	\ytableausetup{notabloids,boxsize=3em}
	T =\, 
	{\def\mc#1{\makecell[lb]{#1}}
	{\begin{array}[lb]{*{3}{|l}|}\cline{1-1}
	\mc{79}\\\cline{1-2}
	\mc{233}&\mc{8\\78}\\\cline{1-3}
	\mc{1}&\mc{3\\223}&\mc{7\\4}\\\cline{1-3}
	\end{array}}\,.}
	\]
	Then, we obtain the following sequence of tableaux $\tilde{\mathcal{V}}_b^i(T)$ for 
	$0\leqslant i\leqslant 2=d$ when computing
	the first multiset uncrowding insertion:
\[
	{\def\mc#1{\makecell[lb]{#1}}
	{\begin{array}[lb]{*{3}{|l}|}\cline{1-1}
	\mc{79}\\\cline{1-2}
	\mc{233}&\mc{8\\78}\\\cline{1-3}
	\mc{1}&\mc{3\\223}&\mc{7\\4}\\\cline{1-3}
	\end{array}}\,
	\rightarrow
	{\begin{array}[lb]{*{3}{|l}|}\cline{1-1}
		\mc{9\\78}\\\cline{1-2}
		\mc{233}&\mc{78}\\\cline{1-3}
		\mc{1}&\mc{3\\223}&\mc{7\\4}\\\cline{1-3}
		\end{array}}\,
		\rightarrow
	{\begin{array}[lb]{*{3}{|l}|}\cline{1-1}
		\mc{9}\\\cline{1-1}
		\mc{78}\\\cline{1-2}
		\mc{233}&\mc{78}\\\cline{1-3}
		\mc{1}&\mc{3\\223}&\mc{7\\4}\\\cline{1-3}
		\end{array}}\,= \tilde{\mathcal{V}}(T).
	}
\]

	Continuing with the remaining multiset uncrowding insertions, we obtain the 
	following sequences of tableaux for the multiset uncrowding map:
	\[
	\begin{aligned}
	{\def\mc#1{\makecell[lb]{#1}}
	{\begin{array}[lb]{*{3}{|l}|}\cline{1-1}
	\mc{79}\\\cline{1-2}
	\mc{233}&\mc{8\\78}\\\cline{1-3}
	\mc{1}&\mc{3\\223}&\mc{7\\4}\\\cline{1-3}
	\end{array}}\,}
	& \rightarrow\; &
	{\def\mc#1{\makecell[lb]{#1}}
	{\begin{array}[lb]{*{3}{|l}|}\cline{1-1}
	\mc{9}\\\cline{1-1}
	\mc{78}\\\cline{1-2}
	\mc{233}&\mc{78}\\\cline{1-3}
	\mc{1}&\mc{3\\223}&\mc{7\\4}\\\cline{1-3}
	\end{array}}\,}
	& \rightarrow\; &
	{\def\mc#1{\makecell[lb]{#1}}
	{\begin{array}[lb]{*{3}{|l}|}\cline{1-1}
	\mc{9}\\\cline{1-2}
	\mc{78}&\mc{8}\\\cline{1-2}
	\mc{233}&\mc{77}\\\cline{1-3}
	\mc{1}&\mc{3\\223}&\mc{4}\\\cline{1-3}
	\end{array}}\,}
	&\rightarrow\; &
	{\def\mc#1{\makecell[lb]{#1}}
	{\begin{array}[lb]{*{3}{|l}|}\cline{1-1}
	\mc{9}\\\cline{1-1}
	\mc{8}\\\cline{1-2}
	\mc{77}&\mc{8}\\\cline{1-2}
	\mc{233}&\mc{337}\\\cline{1-3}
	\mc{1}&\mc{22}&\mc{4}\\\cline{1-3}
	\end{array}}\,} & = \tilde{P}(T),
	\end{aligned}\]
	\[
	\ytableausetup{notabloids,boxsize=1.5em}
	\begin{aligned}
	&\raisebox{5mm}{\begin{ytableau}
		*(gray)\\
		*(gray) & *(gray)\\
		*(gray) & *(gray) & *(gray)
		\end{ytableau}} & \rightarrow\; &
	\raisebox{8mm}{\begin{ytableau}
		2\\
		*(gray)\\
		*(gray) & *(gray)\\
		*(gray) & *(gray) & *(gray)
		\end{ytableau}} & \rightarrow\; &
	\raisebox{8mm}{\begin{ytableau}
		2\\
		*(gray) & 2\\
		*(gray) & *(gray)\\
		*(gray) & *(gray) & *(gray)
		\end{ytableau}} & \rightarrow \; &
	\raisebox{10mm}{\begin{ytableau}
		4\\
		2\\
		*(gray) & 2\\
		*(gray) & *(gray)\\
		*(gray) & *(gray) & *(gray)
		\end{ytableau}} & = \tilde{Q}(T).
	\end{aligned}\]
\end{example}

\begin{proposition}
Let $T \in \HVT$. Then $\tilde{\mathcal{U}}(T)$ is well-defined.
\end{proposition}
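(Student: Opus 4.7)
The plan is to establish well-definedness of $\tilde{\mathcal{U}}$ by proving three claims that parallel the structure used to verify well-definedness of $\mathcal{U}$ in Section~\ref{section.uncrowding HVT}, namely Lemma~\ref{lem: well_defined_v}, Corollary~\ref{well_defined_p}, and Lemma~\ref{well_defined_q}. The map $\tilde{\mathcal{V}}_b$ is obtained from $\mathcal{V}_b$ by interchanging the roles of rows and columns and of arms and legs, so each of the following claims is proved by a case analysis mirroring the corresponding argument for $\mathcal{V}_b$.

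First, I would show that $\tilde{\mathcal{V}}_b(T) \in \HVT$ whenever $T \in \HVT$. The verification splits on whether Step~\ref{def: alt_uncrowd_bump_a} or~\ref{def: alt_uncrowd_bump_b} of Definition~\ref{def: alt_uncrowd_bump} is applied. In Step~\ref{def: alt_uncrowd_bump_a} ($\tilde{c} \neq c$), semistandardness within the modified cells $(r,c)$ and $(r+1,\tilde{c})$ is preserved because $k$ is chosen as the leftmost entry in row $r+1$ strictly greater than $\ell$, so replacing $k$ with $\ell$ keeps that cell a hook-shaped semistandard tableau, while attaching $k$ to its leg keeps the leg strictly increasing. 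The HVT conditions between cells hold since column-strictness in column $\tilde{c}$ and row-weakness in row $r+1$ both follow from the choice of $k$. In Step~\ref{def: alt_uncrowd_bump_b} ($\tilde{c}=c$), the arm entries in $[\ell,a]\cap \sfA_T(r,c)$ move up together with $\ell$, so the new hook in $(r+1,c)$ is semistandard because $\ell$ strictly exceeds the hook entry there and the appended arm concatenates weakly with the existing arm; the affected row and column remain semistandard by arguments analogous to those in the proof of Lemma~\ref{lem: well_defined_v}.

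Second, I would deduce that $\tilde{P}(T)$ is a multiset-valued tableau. By direct inspection, every nontrivial application of $\tilde{\mathcal{V}}_b$ decreases the leg excess by exactly one, so $\tilde{P}(T)=\tilde{\mathcal{V}}^\alpha(T)$ has leg excess zero and thus lies in $\MVT$. Third, I would show $\tilde{Q}(T) \in \mathcal{F}(\shape(\tilde{P}(T))/\lambda)$. The flag bound is immediate since a cell added to row $\tilde{r}$ receives entry $\tilde{r}-r \leq \tilde{r}-1$ as $r \geq 1$. Row-strictness of $\tilde{Q}(T)$ follows from the fact that the topmost row with nonzero leg excess selected by successive applications of $\tilde{\mathcal{V}}$ is weakly decreasing across iterations, so entries appearing in a fixed row of $\tilde{Q}(T)$ strictly increase left to right. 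Column-strictness is the analogue of the technical claim in Lemma~\ref{well_defined_q}: if two successive insertion paths originate in the same row, then the later path lies weakly to the right of the earlier one; this is proved by induction on the common length of the two paths, using that the bumped leg element $\ell$ at each step of the later path is weakly larger than the corresponding leg element in the earlier path, which in turn forces the target column $\tilde{c}$ to be weakly larger.

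The main obstacle will be the column-strictness verification in the third step, since tracking two successive insertion paths requires a careful induction simultaneously comparing the bumped leg elements $\ell$ and the target columns $\tilde{c}$ at each matching step, with additional subcase analysis when $\tilde{c}=c$ and an entire interval $[\ell,a]\cap \sfA_T(r,c)$ of arm entries is transported together. This is the direct transpose of the argument for $\mathcal{U}$, but the interaction between the arm interval and the bumped leg element introduces bookkeeping that must be handled with care.
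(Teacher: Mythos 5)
Your overall architecture is the one the paper intends (its proof is literally ``argue as in Corollary~\ref{well_defined_p} and Lemma~\ref{well_defined_q}, with rows/columns and arms/legs interchanged''), but your execution of the recording-tableau step has a genuine gap: the two strictness conditions for $\tilde{Q}(T)$ are attached to the wrong arguments, and the key technical claim is stated in the wrong direction. The easy consequence of the weak decrease of the starting row $r_0$ across successive insertions is \emph{column}-strictness of $\tilde{Q}(T)$: cells in a fixed column are added bottom to top, so their row index $\tilde{r}$ strictly increases while $r_0$ weakly decreases, and the entries $\tilde{r}-r_0$ strictly increase going up. Within a fixed \emph{row} of $\tilde{Q}(T)$ the value $\tilde{r}$ is constant, so weak decrease of $r_0$ gives only weak increase of the entries; your claimed derivation of row-strictness is a non sequitur. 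Row-strictness is exactly the part that needs the transpose of the path-comparison claim in Lemma~\ref{well_defined_q}, and it must go the other way: if two successive insertions start in the same row, the later one bumps a weakly \emph{smaller} leg entry (legs are strictly increasing within a cell, and every entry of a cell to the left is weakly smaller than the hook entry of the starting cell), hence by induction its target column in each subsequent row is weakly \emph{smaller}, i.e.\ the later path lies weakly to the \emph{left} of the earlier one. That is what forbids two insertions with the same starting row from terminating in the same row (the later terminal cell would have to lie strictly to the right of the earlier one), and hence gives strict increase along the rows of $\tilde{Q}(T)$. Your version (later path weakly right, bumped $\ell$ weakly larger) is false, and even if granted it would not produce the strictness you attach it to.

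Two smaller slips, both fixable: it is the full insertion $\tilde{\mathcal{V}}$, not each bump $\tilde{\mathcal{V}}_b$, that lowers the leg excess by one --- an intermediate bump removes a leg entry from $(r,c)$ but attaches $k$ to a leg in row $r+1$, so the leg excess is unchanged until the terminal bump creates a new cell; this is the same bookkeeping as in Corollary~\ref{well_defined_p}. Also, in your discussion of Step~\ref{def: alt_uncrowd_bump_b} the inequality is reversed: the old hook entry $k$ of $(r+1,c)$ strictly exceeds $\ell$ (by column-strictness of the $\HVT$), $\ell$ becomes the new, smaller hook entry, and semistandardness of that cell follows from $k>a\geqslant x\geqslant \ell$ for every inserted arm entry $x$, with $k$ and the old leg entries sitting above $\ell$ in the new leg. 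With the row/column-strictness mix-up repaired, the rest of your plan does give the paper's argument.
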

\begin{proof}
The statement follows from a similar argument to the proofs found in Corollary~\ref{well_defined_p} and Lemma ~\ref{well_defined_q}.
\end{proof}

Similar to the uncrowding map $\mathcal{U}$, the multiset uncrowding map $\tilde{\mathcal{U}}$ interwines with the corresponding 
crystal operators.
\begin{theorem}
\label{theorem.alternate_main}
    Let $T \in \HVT$. 
    \begin{enumerate}
   \item If $f_{i}(T) = 0$, then $f_{i}(\tilde{P}(T)) = 0$.
   \item If $e_{i}(T) = 0$, then $e_{i}(\tilde{P}(T)) = 0$.
    \item
    If $f_i(T) \neq 0$, we have $f_i(\tilde{P}(T)) = \tilde{P}(f_i(T))$ and $\tilde{Q}(T) = \tilde{Q}(f_i(T))$.
    \item
    If $e_i(T) \neq 0$, we have $e_i(\tilde{P}(T)) = \tilde{P}(e_i(T))$ and $\tilde{Q}(T) =\tilde{Q}(e_i(T))$.
    \end{enumerate}
\end{theorem}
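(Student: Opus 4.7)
The plan is to mirror the proof of Theorem~\ref{theorem.main} by establishing analogues of Lemma~\ref{lem: knuth}, Proposition~\ref{proposition.main}, and Lemma~\ref{intertwine_lemma} for the multiset uncrowding bumping $\tilde{\mathcal{V}}_b$. Parts (1) and (2) on annihilation will follow from a Knuth-equivalence lemma, while parts (3) and (4) will follow from an intertwining lemma $f_i \circ \tilde{\mathcal{V}}_b = \tilde{\mathcal{V}}_b \circ f_i$; the statements for $e_i$ are then obtained by partial-inverse symmetry.

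First I would prove the analogue of Lemma~\ref{lem: knuth}: for $T \in \HVT$, the induced subword $R_i(T)$ is Knuth equivalent to $R_i(\tilde{\mathcal{V}}_b(T))$. When $\tilde{\mathcal{V}}_b$ moves a leg entry $\ell$ from cell $(r,c)$ upwards into cell $(r+1, \tilde{c})$, the column reading word changes only in positions coming from columns $c$ and $\tilde{c}$, since $\ell$ (and, in the $\tilde{c}=c$ case, the multiset $[\ell,a]\cap \sfA_T(r,c)$ together with the bumped entry $k$) redistributes between the extended-leg contributions of these columns. A case analysis based on whether $\ell$ or $k$ equals $i$ or $i+1$, and on whether $\tilde{c}=c$, shows that $R_i(T)$ and $R_i(\tilde{\mathcal{V}}_b(T))$ differ at most by a single Knuth relation. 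Since Knuth equivalence preserves the $i$-pairing of letters, parts (1) and (2) follow immediately by the argument of Proposition~\ref{proposition.main}.

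Next I would prove the intertwining $f_i \circ \tilde{\mathcal{V}}_b = \tilde{\mathcal{V}}_b \circ f_i$ (whenever both sides are nonzero) by a case analysis based on the relative position of the cell $(\hat{r}, \hat{c})$ containing the rightmost unpaired $i$ with respect to the cells $(r,c)$ and $(r+1,\tilde{c})$ affected by $\tilde{\mathcal{V}}_b$. When $(\hat{r}, \hat{c})$ is disjoint from these two cells and their vertical neighbors, the two operations act on disjoint data and trivially commute, using the Knuth-equivalence step to ensure that the location of the rightmost unpaired $i$ is preserved. The nontrivial cases occur when $(\hat{r},\hat{c})\in\{(r,c),(r+1,\tilde{c})\}$: one subdivides according to whether the rightmost unpaired $i$ sits in the hook entry, leg, or arm of the relevant cell and according to which of the three rules (M), (S), (N) is invoked by $f_i$, verifying in each subcase that applying $f_i$ before or after $\tilde{\mathcal{V}}_b$ yields the same tableau.

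Parts (3) and (4) then follow exactly as in the proof of Theorem~\ref{theorem.main}: iterating the intertwining through the chain of $\tilde{\mathcal{V}}_b$ applications that defines $\tilde{P}$ gives $f_i(\tilde{P}(T))=\tilde{P}(f_i(T))$, and $\tilde{Q}(T) = \tilde{Q}(f_i(T))$ because crystal operators preserve the shape and the row index of the topmost row with nonzero leg excess at every stage---rules (M) and (S) do not move leg entries between rows, and rule (N) acts in place. The main obstacle is the intertwining case analysis: although the logical skeleton is a transpose of the one used for $\mathcal{V}_b$, the crystal rules are not symmetric under swapping arms with legs---most notably, rule (S) transfers entries between extended leg and leg of cells in the same row but has no clean row-to-row analogue---so the subcases cannot be obtained by mechanically transposing the proof of Lemma~\ref{intertwine_lemma} and must be re-verified by hand.
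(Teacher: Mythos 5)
Your plan is exactly the route the paper takes: the paper's proof of this theorem simply states that it follows as in Proposition~\ref{proposition.main}, Lemma~\ref{intertwine_lemma}, and Theorem~\ref{theorem.main}, i.e.\ by proving the leg-bumping analogues of the Knuth-equivalence and intertwining lemmas and then assembling them as before, which is precisely what you propose (including the correct observation that $f_i$ never moves leg entries between rows, so the recording tableau is unchanged). Your closing caveat that the case analysis must be redone by hand rather than transposed mechanically is a fair and accurate description of the work the paper leaves implicit.
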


\begin{proof}
The proof follows similarly to those found in Proposition~\ref{proposition.main}, Lemma~\ref{intertwine_lemma}, 
and Theorem~\ref{theorem.main}.
\end{proof}

\section{Applications}
\label{section.application}
In this section, we provide the expansion of the canonical Grothendieck polynomials $G_\lambda(x;\alpha,\beta)$ in terms of
the stable symmetric Grothendieck polynomials $G_\mu(x;\beta=-1)$ and in terms of the dual stable symmetric Grothendieck polynomials 
$g_\mu(x;\beta=1)$ using techniques developed in~\cite{BandlowMorse.2012}. We first review the basic definitions and Schur expansions 
of the two polynomials.

Recall from~\eqref{equation.G stable}, that the stable symmetric Grothendieck polynomial is the generating function of set-valued tableaux
\[
	G_{\mu}(x;-1) = \sum_{S \in \SVT(\mu)} (-1)^{|S|-|\mu|} x^{\mathsf{weight}(S)}.
\]
Its Schur expansion can be obtained from the crystal structure on set-valued tableaux~\cite{MPS.2018}
\[
	 G_{\mu}(x;-1) = \sum_{\substack{ S\in \SVT(\mu)\\ e_i(S) = 0\;\; \forall i}} (-1)^{|S|-|\mu|} \; s_{\mathsf{weight}(S)}.
\]

\begin{definition}
The \defn{reading word} $\mathsf{word}(S)=w_1 w_2 \cdots w_n$ of a set-valued tableau $S\in \SVT(\mu)$ is obtained by reading the elements
in the rows of $S$ from the top row to the bottom row in the following way. In each row, first ignore the smallest element of each cell 
and read all remaining elements in descending order. Then read the smallest elements of each cell in ascending order.
\end{definition}

\begin{example}
The reading word of $P(T)$ in Example~\ref{eg: uncrowding} is $\mathsf{word}(P(T))=8675423362111567$.
\end{example}

\begin{example}
The highest weight set-valued tableaux of shape $(2)$ are
\[{
\def\mc#1{\makecell[lb]{#1}}
\begin{array}[lb]{*{3}{|l}|}\cline{1-2}
	\mc{1}&\mc{1}\\\cline{1-2}
\end{array}\,, \quad
\begin{array}[lb]{*{3}{|l}|}\cline{1-2}
	\mc{1}&\mc{2\\1}\\\cline{1-2}
	\end{array}\,, \quad 
\begin{array}[lb]{*{3}{|l}|}\cline{1-2}
	\mc{1}&\mc{3\\2\\1}\\\cline{1-2}
\end{array}\,, \quad
\begin{array}[lb]{*{3}{|l}|}\cline{1-2}
	\mc{1}&\mc{4\\3\\2\\1}\\\cline{1-2}
\end{array}\,, \quad \dots ,
}\]
which gives the Schur expansion
\[
	G_{(2)}(x;-1) = s_{2}-s_{21}+s_{211}-s_{2111}\pm\cdots.
\]
\end{example}

The \defn{dual stable symmetric Grothendieck polynomials} $g_{\mu}(x;1)$ are dual to $G_\mu(x;-1)$ under the Hall inner product on the ring
of symmetric functions. 

\begin{definition}
A \defn{reverse plane partition} of shape $\mu$ is a filling of the cells in the Ferrers diagram of $\mu$ with positive integers, such that 
the entries are weakly increasing in rows and columns. We denote the collection of all reverse plane partitions of shape $\mu$ by
$\RPP(\mu)$ and the set of all reverse plane partitions by $\RPP$.

The \defn{evaluation} $\ev(R)$ of a reverse plane partition $R\in \RPP$ is a composition $\alpha=(\alpha_i)_{i\geqslant 1}$, 
where $\alpha_i$ is the total number of columns in which $i$ appears. The \defn{reading word} $\mathsf{word}(R)$ is obtained by first circling 
the bottommost occurrence of each letter in each column, and then reading the circled letters row-by-row from top to bottom and left 
to right within each row.
\end{definition}

\begin{example}
Consider the reverse plane partition
\[
R = \begin{ytableau}
1 & 2 \\
1 & 1 & 3
\end{ytableau}\, \in \RPP((3,2)).
\] 
By circling the bottommost occurrence of each letter in each column, we obtain
\[
R = \begin{ytableau}
1 & \circled{2} \\
\circled{1} & \circled{1} & \circled{3}
\end{ytableau},
\; \ev(R) = (2,1,1), \; \mathsf{word}(R) = 
2113.
\]
\end{example}

Lam and Pylyavskyy~\cite{LP.2007} showed that the dual stable symmetric Grothendieck polynomials $g_{\mu}(x;1)$ are generating 
functions of reverse plane partitions of shape $\mu$
\[
	g_{\mu}(x;1) = \sum_{R \in \RPP(\mu)} x^{\ev(R)}.
\]
They also provided the Schur expansion of the dual stable symmetric Grothendieck polynomials~\cite[Theorem 9.8]{LP.2007}
\[
 	g_{\mu}(x;1) = \sum_{F} s_{\mathsf{innershape}(F)},
\]
where the sum is over all flagged increasing tableaux whose outer shape is $\mu$.

\begin{example}
When $\mu = (\mu_1)$ is a partition with only one row, we have $g_{(\mu_1)}(x;1) = s_{(\mu_1)}$.

The flagged increasing tableaux of outer shape $(2,1,1)$ are
\[
\begin{ytableau}
*(gray) \\
*(gray) \\
*(gray) & *(gray) \\
\end{ytableau}\,\raisebox{-1cm}{,}\;
\begin{ytableau}
1 \\
*(gray) \\
*(gray) & *(gray) \\
\end{ytableau}\,\raisebox{-1cm}{,}\;
\begin{ytableau}
2 \\
*(gray) \\
*(gray) & *(gray) \\
\end{ytableau}\,\raisebox{-1cm}{,}\;
\begin{ytableau}
2 \\
1 \\
*(gray) & *(gray) \\
\end{ytableau}\,\raisebox{-1cm}{.}
\]
Hence $g_{211}(x;1) = s_{211} + 2s_{21}+s_{2}$.
\end{example}

According to~\cite{BandlowMorse.2012}, a symmetric function $f_\alpha$ over the ring $R$ is said to have a \defn{tableaux Schur expansion} 
if there is a set of tableaux $\mathbb{T}(\alpha)$ and a weight function $\mathsf{wt}_\alpha \colon \mathbb{T}(\alpha) \to R$ so that 
\[
	f_\alpha = \sum_{T \in \mathbb{T}(\alpha)} \mathsf{wt}_\alpha(T)s_{\mathsf{shape}(T)}.
\]
Furthermore, any symmetric function with such a property has the following expansion in terms of $G_{\mu}(x;-1)$ and $g_{\mu}(x;1)$.

\begin{theorem} \cite[Theorem 3.5]{BandlowMorse.2012}
Let $f_\alpha$ be a symmetric function with a tableaux Schur expansion
$f_\alpha = \sum_{T\in \mathbb{T}(\alpha)}\mathsf{wt}_{\alpha}(T)s_{\mathsf{shape}(T)}$ for some $\mathbb{T}(\alpha)$. Let
$\mathbb{S}(\alpha)$ and $\mathbb{R}(\alpha)$ be defined as sets of set-valued tableaux and reverse plane partitions, respectively, by
\begin{align*}
	S \in \mathbb{S}(\alpha) &\text{ if and only if } P(\mathsf{word}(S)) \in \mathbb{T}(\alpha), \text{ and}\\
	R \in \mathbb{R}(\alpha) &\text{ if and only if } P(\mathsf{word}(R)) \in \mathbb{T}(\alpha),
\end{align*}
where $P(w)$ is the RSK insertion tableau of the word $w$.
We also extend $\mathsf{wt}_\alpha$ to $\mathbb{S}(\alpha)$ and $\mathbb{R}(\alpha)$ by setting 
$\mathsf{wt}_\alpha(X):= \mathsf{wt}_\alpha(P(\mathsf{word}(X)))$ for any $X \in \mathbb{S}(\alpha)$ or $\mathbb{R}(\alpha)$. Then we have
\begin{align*}
	f_\alpha &= \sum_{R\in \mathbb{R}(\alpha)}\mathsf{wt}_\alpha(R)G_{\mathsf{shape}(R)}(x;-1), \text{ and}\\
	f_\alpha &= \sum_{S\in \mathbb{S}(\alpha)}\mathsf{wt}_\alpha(S)(-1)^{|S|-|\mathsf{shape}(S)|}g_{\mathsf{shape}(S)}(x;1).
\end{align*}
\end{theorem}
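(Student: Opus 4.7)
The plan is to reduce both identities to the Lam--Pylyavskyy Schur expansion $g_\mu(x;1) = \sum_\lambda |\mathcal{F}(\mu/\lambda)|\, s_\lambda$ (stated in the excerpt) by establishing dual change-of-basis formulas between the three bases, and then substituting them into the hypothesized tableaux Schur expansion of $f_\alpha$. The main combinatorial tools are the Hall duality $\langle G_\lambda(x;-1), g_\mu(x;1)\rangle = \delta_{\lambda\mu}$ (noted in the excerpt), Buch's uncrowding bijection $\mathcal{U}_{\SVT}$ of Definition~\ref{definition.uncrowding SVT}, and an analogous insertion correspondence for reverse plane partitions.

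For the $G$-expansion, Hall duality together with the self-duality of the Schur basis immediately yields the change-of-basis formula
\[
s_\lambda = \sum_{\mu \supseteq \lambda,\, \mu_1 = \lambda_1} |\mathcal{F}(\mu/\lambda)|\, G_\mu(x;-1),
\]
since $\langle s_\lambda, g_\mu\rangle = |\mathcal{F}(\mu/\lambda)|$. Substituting into $f_\alpha = \sum_{T \in \mathbb{T}(\alpha)} \mathsf{wt}_\alpha(T)\, s_{\shape(T)}$ and collecting coefficients of $G_\mu(x;-1)$ yields the inner sum $\sum_T \mathsf{wt}_\alpha(T)\, |\mathcal{F}(\mu/\shape(T))|$. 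A key auxiliary bijection
\[
\RPP(\mu) \;\longleftrightarrow\; \bigsqcup_\lambda \SSYT(\lambda) \times \mathcal{F}(\mu/\lambda),
\]
obtained via the Lam--Pylyavskyy reading-word/insertion correspondence sending $R$ to $(P(\mathsf{word}(R)), F)$ for a canonical flagged increasing tableau $F$, shows that $|\{R \in \RPP(\mu) : P(\mathsf{word}(R)) = T\}| = |\mathcal{F}(\mu/\shape(T))|$. This reidentifies the coefficient as $\sum_{R \in \RPP(\mu) \cap \mathbb{R}(\alpha)} \mathsf{wt}_\alpha(R)$, producing the claimed $G$-expansion.

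For the $g$-expansion, the parallel argument requires inverting the triangular transition matrix $A_{\mu,\lambda} = |\mathcal{F}(\mu/\lambda)|$. The key step is proving
\[
\sum_\mu (-1)^{|\nu|-|\mu|}\, |\mathcal{F}(\nu/\mu)|\, |\mathcal{F}(\mu/\lambda)| = \delta_{\nu,\lambda}
\]
by a sign-reversing involution on pairs $(F_1, F_2) \in \mathcal{F}(\nu/\mu) \times \mathcal{F}(\mu/\lambda)$, which delivers the inverse change-of-basis $s_\nu = \sum_{\mu \subseteq \nu} (-1)^{|\nu|-|\mu|}\, |\mathcal{F}(\nu/\mu)|\, g_\mu(x;1)$. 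Substituting this into the Schur expansion of $f_\alpha$ and invoking Buch's uncrowding bijection---which identifies $S \in \SVT(\mu)$ with pairs $(P, F) \in \SSYT(\nu) \times \mathcal{F}(\nu/\mu)$ satisfying $P = P(\mathsf{word}(S))$ and $|S|-|\mu| = |\nu|-|\mu|$---collapses the resulting sum into the claimed $g$-expansion. The main obstacle is constructing the sign-reversing involution implementing the matrix inversion; a natural strategy locates the minimal column at which the pair $(F_1, F_2)$ fails to assemble into a single flagged increasing tableau and swaps a cell across the boundary, analogous to the cancellation arguments of Lindström--Gessel--Viennot type. A secondary technical point is verifying that the RSK insertion tableau of the reading word coincides with the output of Buch's uncrowding, which follows from standard properties of row-insertion.
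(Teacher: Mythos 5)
You should first be aware that the paper offers no proof of this statement to compare against: it is quoted verbatim from Bandlow--Morse \cite{BandlowMorse.2012} and used as a black box in Section~\ref{section.application}. Measured against that source, your skeleton is essentially the standard one: use Hall duality of $\{G_\mu(x;-1)\}$ and $\{g_\mu(x;1)\}$ together with the Lam--Pylyavskyy expansion to write $s_\lambda=\sum_{\mu}|\mathcal{F}(\mu/\lambda)|\,G_\mu(x;-1)$, substitute into $f_\alpha=\sum_{T}\mathsf{wt}_\alpha(T)s_{\shape(T)}$, and reinterpret the resulting coefficients as fiber counts of $P(\word(\cdot))$; then do the analogous thing for the $g$-basis. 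That much is sound, including the change-of-basis computation via $\langle s_\lambda,g_\mu\rangle=|\mathcal{F}(\mu/\lambda)|$.

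The gaps are in the two steps that carry the actual combinatorial content. First, the identity $\#\{R\in\RPP(\mu): P(\word(R))=T\}=|\mathcal{F}(\mu/\shape(T))|$ is precisely the nontrivial lemma that must be proved; invoking ``the Lam--Pylyavskyy reading-word/insertion correspondence'' does not discharge it, because the RPP reading word (bottommost occurrence of each letter in each column) is not the row word of a semistandard tableau, and one must genuinely construct a bijection $\RPP(\mu)\leftrightarrow\bigsqcup_\lambda\SSYT(\lambda)\times\mathcal{F}(\mu/\lambda)$ compatible with RSK (or argue via crystal operators that the fiber size depends only on $\shape(T)$ and then extract it from the Schur expansion of $g_\mu$). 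The same compatibility issue, which you call secondary, also needs an explicit check on the set-valued side: that the insertion tableau of Buch's uncrowding (Definition~\ref{definition.uncrowding SVT}) coincides with $P(\word(S))$ for the reading-word convention used here. Second, for the $g$-expansion you reduce everything to $\sum_\mu(-1)^{|\nu|-|\mu|}|\mathcal{F}(\nu/\mu)|\,|\mathcal{F}(\mu/\lambda)|=\delta_{\nu\lambda}$ and leave the sign-reversing involution unconstructed, explicitly labeling it the main obstacle; as written this is a hole, and moreover the detour is unnecessary. Since you already use Hall duality, pairing $s_\nu=\sum_\mu c_{\nu\mu}\,g_\mu(x;1)$ with $G_\rho(x;-1)$ gives $c_{\nu\rho}=\langle s_\nu,G_\rho(x;-1)\rangle=(-1)^{|\nu|-|\rho|}|\mathcal{F}(\nu/\rho)|$, the last equality being Lenart's expansion \cite{Lenart.2000}, which itself follows from the crystal formula for $G_\mu(x;-1)$ quoted in Section~\ref{section.application} combined with the uncrowding bijection on $\SVT$. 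So the second identity needs no involution at all once the fiber lemmas are in place; conversely, without proofs of those fiber lemmas, neither identity is established.
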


\begin{proposition}
\label{proposition.Schur expansion}
The canonical Grothendieck polynomials have a tableaux Schur expansion.
\end{proposition}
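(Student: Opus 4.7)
The plan is to exhibit a tableau Schur expansion explicitly, using the Stembridge crystal structure on $\HVT^m(\lambda)$ together with the observation that $a(T)$ and $\ell(T)$ are crystal invariants. Concretely, I would take
\[
\mathbb{T}(\lambda) := \{T \in \HVT(\lambda) \mid e_i(T) = 0 \text{ for all } i \geqslant 1\},
\qquad
\mathsf{wt}_\lambda(T) := \alpha^{a(T)} \beta^{\ell(T)},
\]
and prove
\[
G_\lambda(x;\alpha,\beta) \;=\; \sum_{T \in \mathbb{T}(\lambda)} \alpha^{a(T)} \beta^{\ell(T)} \, s_{\mathsf{weight}(T)}.
\]

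The first step is to check, by direct inspection of Definition~\ref{def: hvt_crystal}, that the crystal operators $e_i$ and $f_i$ preserve both the arm excess $a(T)$ and the leg excess $\ell(T)$. Rule (M) moves a single letter between two arms, rule (S) moves a single letter between two extended legs (with the hook-entry convention ensuring total leg excess is preserved), and rule (N) rewrites a letter in place. Hence the monomial $\alpha^{a(T)} \beta^{\ell(T)}$ is constant on each connected component of $\HVT^m(\lambda)$.

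The second step is to invoke the Stembridge crystal result for $\HVT^m(\lambda)$ established just above (the corollary following Theorem~\ref{theorem.main}): since this is a Stembridge crystal of type $A_{m-1}$, standard crystal theory guarantees that each connected component is isomorphic to a highest weight crystal $B(\nu)$ for some partition $\nu = \mathsf{weight}(T_{\mathrm{hw}})$, with character equal to the Schur polynomial $s_\nu(x_1,\dots,x_m)$. Regrouping the defining sum
\[
G_\lambda(x;\alpha,\beta) \;=\; \sum_{T \in \HVT^m(\lambda)} \alpha^{a(T)} \beta^{\ell(T)} x^{\mathsf{weight}(T)}
\]
by connected component and factoring out the component-wise invariant $\alpha^{a(T)} \beta^{\ell(T)}$ then yields the claimed expansion for every $m$, hence as an identity of symmetric functions.

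Since the Stembridge property is already established via the crystal isomorphism given by $\mathcal{U}$, and the invariance of $a(T)$ and $\ell(T)$ is immediate from the three-case analysis of the operators, I do not foresee a serious technical obstacle; the proof is a bookkeeping argument that packages together the tools developed earlier in the paper. The only subtlety worth flagging is verifying that $\mathsf{weight}(T_{\mathrm{hw}})$ is indeed a partition for every highest weight element, which follows from the standard fact that highest weight vectors in type $A$ Stembridge crystals have dominant (partition) weight.
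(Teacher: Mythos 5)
Your crystal-theoretic bookkeeping is essentially sound: the operators of Definition~\ref{def: hvt_crystal} do preserve arm excess and leg excess (though in rule (S) the case where the removed $i$ is the hook entry of $B^\rightarrow$ deserves an explicit word, since there the new hook entry must come from the leg for the count to balance), and the corollary after Theorem~\ref{theorem.main} lets you split $\HVT^m(\lambda)$ into components isomorphic to highest weight crystals and factor out $\alpha^{a(T)}\beta^{\ell(T)}$ on each. But note that the identity you arrive at, $G_\lambda(x;\alpha,\beta)=\sum_{H}\alpha^{a(H)}\beta^{\ell(H)}s_{\mathsf{weight}(H)}$ with $H$ running over highest weight hook-valued tableaux of shape $\lambda$, is precisely \cite[Theorem 4.6]{HS.2019}, which the paper's proof simply cites as its starting point; so up to this point you have re-derived known input rather than the proposition.

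The genuine gap is that this identity is not yet a ``tableaux Schur expansion'' in the sense of \cite{BandlowMorse.2012} as used here: that definition requires a set $\mathbb{T}(\lambda)$ of tableaux and a weight function with $f=\sum_{T\in\mathbb{T}(\lambda)}\mathsf{wt}_\lambda(T)\,s_{\mathsf{shape}(T)}$, and for the intended application (testing whether the RSK insertion tableau $P(\mathsf{word}(S))$ lies in $\mathbb{T}(\lambda)$) the elements of $\mathbb{T}(\lambda)$ must be semistandard Young tableaux. Your $\mathbb{T}(\lambda)$ consists of hook-valued tableaux, all of shape $\lambda$, and your Schur functions are indexed by $\mathsf{weight}$ rather than $\mathsf{shape}$, so the expansion is not of the required form as written. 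The repair is short --- group the highest weight $H$ by weight $\nu$ and replace each group by the unique highest weight element of $\SSYT(\nu)$, setting $\mathsf{wt}_\lambda$ of that tableau equal to $\sum_H \alpha^{a(H)}\beta^{\ell(H)}$ --- but it must be said, because the specific set of semistandard tableaux chosen is exactly what determines $\mathbb{R}(\lambda)$ and $\mathbb{S}(\lambda)$ in the subsequent corollary. This is also where your route diverges from the paper's: the paper pushes the sum through the uncrowding bijections $\mathcal{U}$ and $\mathcal{U}_{\SVT}$, landing on $\mathbb{T}(\lambda)$ consisting of the highest weight tableaux in $\SSYT(\nu)$ for $\nu\supseteq\lambda$ together with the explicit weight $\mathsf{wt}_\lambda(T)=\sum_{\mu}\alpha^{|\mu|-|\lambda|}\beta^{|\mathsf{shape}(T)|-|\mu|}\sum_{Q}\phi_\lambda(\mathcal{U}^{-1}_{\SVT}(T,Q))$, i.e.\ a formula read off from flagged increasing tableaux and the image set $\mathsf{K}_\lambda$; your completed argument would produce the same coefficients, but only as unstructured sums over highest weight hook-valued tableaux, without the explicit uncrowding description that the applications in Section~\ref{section.application} rely on.
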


\begin{proof}
Recall the uncrowding map on set-valued tableaux of Definition~\ref{definition.uncrowding SVT}
\[
	\mathcal{U}_{\SVT}: \, \SVT(\mu) \longrightarrow \bigsqcup_{\nu\supseteq \mu}\SSYT(\nu)\times \mathcal{F}(\nu/\mu).
\]
By Corollary~\ref{corollary.main}, we have a bijection 
\[
	\mathcal{U}: \HVT(\lambda) \to \mathsf{K}_\lambda = \bigsqcup_{\mu \supseteq \lambda}\mathsf{K}_\lambda(\mu).
\]
Note that $\mathsf{K}_\lambda \subseteq \bigsqcup_{\mu \supseteq \lambda} \SVT(\mu)\times \mathcal{\hat{F}}(\mu/\lambda)$.
Denote 
\[
	\phi_\lambda(S) = |\{F\in \hat{\mathcal{F}} \mid (S,F)\in \mathsf{K}_\lambda\}|.
\]
Note that sometimes $\phi_\lambda(S) = 0$.

Given $H\in \HVT(\lambda)$, we have $\mathcal{U}(H) = (S,F)\in \SVT(\mu)\times \mathcal{\hat{F}}(\mu/\lambda)$ for some 
$\mu \supseteq \lambda$ and $|\mu|= |\lambda|+a(H)$. We can also obtain $\mathcal{U}_{\SVT}(S) = (T,Q)\in 
\SSYT(\nu)\times \mathcal{F}(\nu/\mu)$ for some $\nu\supseteq \mu$ and $|\nu|= |H|$. The weights of $H,S$ and $T$ are the same.
When $H$ is highest weight, that is $e_i(H)=0$ for all $i$, then $S$ and $T$ are also of highest weight and 
$\mathsf{weight}(H) = \mathsf{shape}(T)$. Denote by $\HVT_h(\lambda), \SVT_h(\lambda), \SSYT_h(\lambda)$ the subset 
of highest weight elements in $\HVT(\lambda), \SVT(\lambda), \SSYT(\lambda)$, respectively. 

Applying~\cite[Theorem 4.6]{HS.2019} and the above correspondence, we obtain
\begin{align*}
	G_{\lambda}(x;\alpha,\beta) &= \sum_{H\in \HVT_h(\lambda)} \alpha^{a(H)}\beta^{\ell(H)}s_{\mathsf{weight}(H)} 
	= \sum_{\mu\supseteq \lambda}\sum_{(S,F)\in \mathsf{K}_\lambda(\mu)}\alpha^{|\mu|-|\lambda|}\beta^{|S|-|\mu|}s_{\mathsf{weight}(S)}\\
	&= \sum_{\mu\supseteq \lambda} \sum_{S\in \SVT_h(\mu)} \phi_\lambda(S)\alpha^{|\mu|-|\lambda|}\beta^{|S|-|\mu|}s_{\mathsf{weight}(S)}\\
	&= \sum_{\mu\supseteq \lambda} \sum_{\nu \supseteq \mu}\sum_{T\in \SSYT_h(\nu)}\sum_{Q\in \mathcal{F}(\nu/\mu)}
	\phi_\lambda(\mathcal{U}^{-1}_{\SVT}(T,Q))\alpha^{|\mu|-|\lambda|}\beta^{|\nu|-|\mu|}s_{\mathsf{weight}(T)}\\
	&= \sum_{\mu\supseteq \lambda}\sum_{\nu \supseteq \mu}\sum_{T\in \SSYT_h(\nu)}\alpha^{|\mu|-|\lambda|}\beta^{|\nu|-|\mu|}
	\sum_{Q\in \mathcal{F}(\nu/\mu)}\phi_\lambda(\mathcal{U}^{-1}_{\SVT}(T,Q))s_{\mathsf{shape}(T)}\\
	&= \sum_{T\in \mathbb{T}(\lambda)} \mathsf{wt}_{\lambda}(T)s_{\mathsf{shape}(T)},
\end{align*}
where $\mathbb{T}(\lambda) = \{T\in \SSYT_h(\nu) \mid \nu\supseteq \lambda\}$ and 
\[
	\mathsf{wt}_{\lambda}(T) = \sum_{\mu: \lambda \subseteq \mu \subseteq \mathsf{shape}(T)}\alpha^{|\mu|-|\lambda|}
	\beta^{|\mathsf{shape}(T)|-|\mu|}\sum_{Q\in \mathcal{F}(\mathsf{shape}(T)/\mu)}\phi_\lambda(\mathcal{U}^{-1}_{\SVT}(T,Q)).
\]
\end{proof}

Note that Proposition~\ref{proposition.Schur expansion} in particular implies that the canonical Grothendieck polynomials are Schur positive. 
This was known from~\cite{HS.2019}, but here an explicit tableaux formula is given. 

\begin{corollary}
The canonical Grothendieck polynomials have $G_\mu(x;-1)$ and $g_\mu(x;1)$ expansions:
\begin{align*}
	G_\lambda(x;\alpha,\beta) &= \sum_{R\in \mathbb{R}(\lambda)}\mathsf{wt}_\lambda(R)G_{\mathsf{shape}(R)}(x;-1),\\
	G_\lambda(x;\alpha,\beta) &= \sum_{S\in \mathbb{S}(\lambda)}\mathsf{wt}_\lambda(S)(-1)^{|S|-|\mathsf{shape}(S)|}
	g_{\mathsf{shape}(S)}(x;1).	
\end{align*}
\end{corollary}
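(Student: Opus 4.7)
The plan is to observe that this corollary is essentially immediate from the combination of two results already in hand: Proposition~\ref{proposition.Schur expansion}, which established the tableaux Schur expansion
\[
    G_\lambda(x;\alpha,\beta) = \sum_{T\in\mathbb{T}(\lambda)} \mathsf{wt}_\lambda(T)\,s_{\mathsf{shape}(T)},
\]
with the explicit set $\mathbb{T}(\lambda)=\{T\in\SSYT_h(\nu)\mid \nu\supseteq\lambda\}$ and weights $\mathsf{wt}_\lambda(T)$ given in terms of column-flagged and flagged increasing tableaux; and the Bandlow--Morse Theorem~3.5 cited just above, which mechanically promotes any tableaux Schur expansion into both a $G_\mu(x;-1)$ expansion (indexed by reverse plane partitions) and a $g_\mu(x;1)$ expansion (indexed by set-valued tableaux).

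Concretely, I would first invoke Proposition~\ref{proposition.Schur expansion} to certify that $G_\lambda(x;\alpha,\beta)$ qualifies as an input to the Bandlow--Morse machine, with the ring $R=\mathbb{Z}[\alpha,\beta]$, index ``$\alpha$'' replaced by the partition $\lambda$, and the tableau family $\mathbb{T}(\lambda)$ and weight function $\mathsf{wt}_\lambda$ exactly as constructed in that proposition. Then I would simply apply the Bandlow--Morse theorem verbatim: the two sets $\mathbb{R}(\lambda)$ and $\mathbb{S}(\lambda)$ defined in the excerpt are precisely the preimages of $\mathbb{T}(\lambda)$ under RSK insertion of the reading words of reverse plane partitions and of set-valued tableaux, respectively, and the extension of $\mathsf{wt}_\lambda$ to these sets via $\mathsf{wt}_\lambda(X):=\mathsf{wt}_\lambda(P(\mathsf{word}(X)))$ is the one prescribed by the theorem. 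The two displayed expansions in the corollary are then the two conclusions of Bandlow--Morse's theorem applied to $f_\lambda=G_\lambda(x;\alpha,\beta)$.

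There is essentially no obstacle: all of the combinatorial heavy lifting --- the bijection $\mathcal{U}$ of Corollary~\ref{corollary.main}, the identification of the image $\mathsf{K}_\lambda$, the definition of $\phi_\lambda$, the iterated application of $\mathcal{U}_{\SVT}$, and the collection of terms into a sum over highest-weight semistandard Young tableaux --- was already carried out in the proof of Proposition~\ref{proposition.Schur expansion}. The only point worth double-checking is that the $\mathsf{wt}_\lambda(T)$ produced there, which lives in $\mathbb{Z}[\alpha,\beta]$, is a legitimate weight in the sense required by Bandlow--Morse (whose $R$ is an arbitrary ring of scalars), but this is transparent. Accordingly the ``proof'' of the corollary reduces to a single sentence citing Proposition~\ref{proposition.Schur expansion} together with \cite[Theorem 3.5]{BandlowMorse.2012}.
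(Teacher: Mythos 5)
Your proposal is correct and matches the paper exactly: the corollary is stated there without a separate proof precisely because it is the immediate application of \cite[Theorem 3.5]{BandlowMorse.2012} to the tableaux Schur expansion established in Proposition~\ref{proposition.Schur expansion}, with $\mathbb{T}(\lambda)$, $\mathsf{wt}_\lambda$, $\mathbb{R}(\lambda)$, and $\mathbb{S}(\lambda)$ playing the roles you describe. Nothing further is needed.
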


\begin{example}
We compute the first two terms in $G_{(2)}(x;\alpha,\beta) = s_{2} + \beta s_{21} + 2\alpha s_{3} + 2\alpha\beta s_{31}+ \cdots$. 
The semistandard Young tableaux involved are 
\[
\mathbb{T}((2)) = \left\{
\raisebox{-0.65cm}{
\begin{ytableau}
	1 & 1
\end{ytableau}}\,, 
\begin{ytableau}
	2 \\
	1 & 1
\end{ytableau}\,,
\raisebox{-0.65cm}{
\begin{ytableau}
	1 & 1 & 1
\end{ytableau}}\,,
\begin{ytableau}
	2\\
	1 & 1 & 1
\end{ytableau}\,, \dots\right\}.	
\]
Labelling the tableaux $T_1,T_2,T_3,T_4,\dots$, we have $\mathsf{wt}_{(2)}(T_1)= 1, \mathsf{wt}_{(2)}(T_2)=\beta, 
\mathsf{wt}_{(2)}(T_3) = 2\alpha, \mathsf{wt}_{(2)}(T_4) = 2\alpha\beta$. Next we compute the elements in $\mathbb{R}((2))$ and 
$\mathbb{S}((2))$ that correspond to $T_1$ and $T_2$:
\begin{align*}
	\{R\in \mathbb{R}((2)) \mid P(\mathsf{word}(R))=T_1\} =&\bigl\{\,
	\raisebox{-0.65cm}{
\begin{ytableau}
	1 & 1
\end{ytableau}}\,,
\begin{ytableau}
	1 \\
	1 & 1
\end{ytableau}\,,
\begin{ytableau}
	1 & 1\\
	1 & 1
\end{ytableau}\,,
\raisebox{0.65cm}{
\begin{ytableau}
	1 \\
	1 \\
	1 & 1
\end{ytableau}}\,,\dots\bigr\}\\
\{R\in \mathbb{R}((2)) \mid P(\mathsf{word}(R))=T_2\} =&\bigl\{\,
\begin{ytableau}
	2\\
	1 & 1
\end{ytableau}\,,
\begin{ytableau}
	1 & 2\\
	1 & 1
\end{ytableau}\,,
\raisebox{0.65cm}{
\begin{ytableau}
	2\\
	1 \\
	1 & 1
\end{ytableau}}\,,
\raisebox{0.65cm}{
\begin{ytableau}
	2 \\
	2 \\
	1 & 1
\end{ytableau}}\, ,\dots \bigr\}\\
\{S\in \mathbb{S}((2)) \mid P(\mathsf{word}(S))=T_1\} =&\bigl\{\,
\begin{ytableau}
	1 & 1
\end{ytableau}\,\bigr\}\\
\{S\in \mathbb{S}((2)) \mid P(\mathsf{word}(S))=T_2\} =&\bigl\{\,
\begin{ytableau}
	2\\
	1 & 1
\end{ytableau}\,,
{\def\mc#1{\makecell[lb]{#1}}
{\begin{array}[lb]{*{2}{|l}|}\cline{1-2}
\mc{1}&\mc{2\\1}\\\cline{1-2}
\end{array}}}\bigr\}.
\end{align*}
Applying the expansion formulas, we obtain
\begin{align*}
	G_{(2)}(x;\alpha,\beta) =& (G_{(2)}(x;-1)+G_{(21)}(x;-1)+G_{(22)}(x;-1)+G_{(211)}(x;-1)+\cdots) \\
	&+ \beta(G_{(21)}(x;-1)+G_{(22)}(x;-1)+2G_{(211)}(x;-1)+\cdots) + \cdots\\
	G_{(2)}(x;\alpha,\beta) =& g_{(2)}(x;1)+\beta(g_{(21)}(x;1)-g_{(2)}(x;1)) + \cdots .
\end{align*}
\end{example}

\bibliographystyle{alpha}
\bibliography{main}{}

\end{document}